\theoremstyle{definition}
 \newtheorem{dfn}{Definition}
 \newtheorem{remark}[dfn]{Remark}
\theoremstyle{plain}
\newtheorem{thm}[dfn]{Theorem}
 \newtheorem{lem}[dfn]{Lemma}
 \newtheorem{cor}[dfn]{Corollary}
\numberwithin{equation}{section}
\newcommand{\bu}{{\mathbf u}}
\newcommand{\bv}{{\mathbf v}}
\newcommand{\bw}{{\mathbf w}}
\newcommand{\ba}{{\mathbf a}}
\newcommand{\bff}{{\mathbf f}}
\newcommand{\bG}{{\mathbf G}}
\newcommand{\bI}{{\mathbf I}}
\newcommand{\bS}{{\mathbf S}}
\newcommand{\bT}{{\mathbf T}}
\newcommand{\bU}{{\mathbf U}}
\newcommand{\dv}{{\rm div}\,}
\newcommand{\BA}{{\mathbb A}}
\newcommand{\BR}{{\mathbb R}}
\newcommand{\BC}{{\mathbb C}}
\newcommand{\BN}{{\mathbb N}}
\newcommand{\BG}{{\mathbb G}}
\newcommand{\BI}{{\mathbb I}}
\newcommand{\BF}{{\mathbb F}}
\newcommand{\BZ}{{\mathbb Z}}
\newcommand{\CA}{{\mathcal A}}
\newcommand{\CD}{{\mathcal D}}
\newcommand{\CL}{{\mathcal L}}
\newcommand{\CR}{{\mathcal R}}
\newcommand{\CS}{{\mathcal S}}
\newcommand{\CH}{{\mathcal H}}
\newcommand{\bg}{{\bold g}}
\newcommand{\bh}{{\bold h}}
\newcommand{\pd}{\partial}
\newcommand{\HS}{\BR^N_+}
\newcommand{\WS}{\BR^N}
\renewcommand{\d}{\,\mathrm{d}}
\begin{document}
\title[]{$L_1$ approach to the compressible viscous fluid flows 
in the half-space}

\author[]{Jou chun Kuo}
\address{(J. Kuo) {School of Science and Engineering},
Waseda University \\
3-4-5 Ohkubo Shinjuku-ku, Tokyo, 169-8555, Japan }		
\email{kuojouchun@asagi.waseda.jp} 
\author[]{Yoshihiro Shibata}
\address{(Y. Shibata) 
{ Professor Emeritus of Waseda University \\
3-4-5 Ohkubo Shinjuku-ku, Tokyo, 169-8555, Japan. \\
Adjunct faculty member in the Department of Mechanical Engineering 
and	Materials Science, University of Pittsburgh, USA
}}
\email{yshibata325@gmail.com}

\subjclass[2010]{
{Primary: 35Q30; Secondary: 76N10}}

\thanks{The second author is  partially supported by JSPS KAKENHI 
Grant Number 22H01134}

\keywords{Navier--Stokes equations;  
maximal $L_1$-regularity, local wellposedness}

\date{\today}   

\maketitle

%%%%%%%%%%%%%%%%%%%%%%%%%%%

\begin{abstract}
In this paper, we prove the local well-posedness for the Navier-Stokes equations describing the motion of
isotropic barotoropic compressible viscous fluid flow with 
non-slip boundary conditions, where the half-space $\HS = \{x=(x_1, \ldots, x_N) \in \BR^N \mid 
x_N>0\}$ ($N \geq 2$) is  the fluid domain.   
The density part of our solutions belongs to  $W^1_1((0, T), B^s_{q,1}(\HS))
\cap L_1((0, T), B^{s+1}_{q,1}(\HS))$ and the velocity part  of our solutions  
$W^1_1((0, T), B^{s}_{q,1}(\HS)^N) \cap L_1((0, T), B^{s+2}_{q,1}(\HS))$, 
where $B^\mu_{q,1}(\HS)$ denotes the inhomogeneous Besov
space on $\HS$. Namely, we solve the equations in  
the $L_1$ in time and $B^{s+1}_{q,1}(\HS) \times B^s_{q,1}(\HS)^N$ in space maximal regularity
framework. 
We use Lagrange transformation to eliminate the convection term $\bv\cdot\nabla\rho$ and 
we use an analytic semigroup approach.  
We only assume  the strictly positiveness of initial mass density. 
An essential assumption is that  $-1+N/q \leq  s < 1/q $ and $N-1 < q < \infty$.
Here,  $N/q$ is the crucial 
order to obtain  $\|\nabla \bu\|_{L_\infty} \leq C\|\nabla\bu\|_{B^{N/q}_{q,1}}$. 
\end{abstract}

%---------------------------------------------------------------------------------------------------------------
%	1. introduction
%---------------------------------------------------------------------------------------------------------------
\section{Introduction}

Let $1 < q < \infty$ and $-1+N/q \leq s < 1/q$, where $N\geq 2$ is the space dimension. 
In this paper, we use the  $L_1$--$B^{s+1}_{q,1}\times B^s_{q,1}$ maximal regularity framework to show
 the local well-posedness of the Navier-Stokes
equations describing the isotropic motion of the compressible viscous fluid flows
in the half-space. Let
$$\HS=\{x=(x_1, \ldots, x_N) \in \BR^N \mid x_N > 0\}, 
\quad
\pd\HS= \{x=(x_1, \ldots, x_N) \in \BR^N \mid x_N = 0\}.
$$
The equations considered in this paper read as
\begin{equation}\label{ns:1}\left\{\begin{aligned}
\rho_t + \dv(\rho\bv) = 0&&\quad&\text{in $\HS\times(0, T)$}, \\
\rho(\bv_t + \bv\cdot\nabla\bv) - \alpha\Delta \bv - \beta\nabla\dv\bv
+ \nabla P(\rho) = 0&&\quad&\text{in $\HS\times(0, T)$}, \\
\bv|_{\pd\HS} = 0, \quad(\rho, \bv) =(\rho_0, \bv_0)
&&\quad&\text{in $\HS$}.
\end{aligned}\right.\end{equation}
Here, 
$\alpha$ and $\beta$ denote respective
the viscosity coefficients and the second viscosity 
coefficients satisfying the conditions
\begin{equation}\label{assump:1.1}
\alpha > 0, \quad \alpha+\beta >0,
\end{equation}
and 
$P(\rho)$ is a smooth function defined on $(0, \infty)$ satisfying $P'(\rho) > 0$, 
that is,  the barotropic fluid is considered. 
\par
The main result of this paper is the following theorem. 
\begin{thm}\label{thm:1} Let $N-1 < q < \infty$ and $-1+N/q \leq  s < 1/q$. 
Let $\rho_*$ be a positive constant describing the mass density of the reference body,
and let $\tilde\eta_0 \in B^{s+1}_{q,1}(\HS)$.  Set $\eta(x) = \rho_* + \tilde\eta_0(x)$.
Assume that there exist two positive constants $\rho_1 < \rho_2$ such that 
 \begin{equation}\label{assump:0}
\rho_1 < \rho_* < \rho_2, \quad \rho_1 < P'(\rho_*) < \rho_2, \quad 
\rho_1 < \eta_0(x) < \rho_2, \quad  
\rho_1 < P'(\eta_0(x)) < \rho_2
\quad(x \in \overline{\HS}).
\end{equation}
 \par 
Then, 
there exist  small numbers $T>0$ and $\sigma>0$  such that 
for any initial data 
$\rho_0= \rho_*+\tilde\rho_0$ with $\tilde\rho_0
 \in B^{s+1}_{q,1}(\HS)$ and $\bv_0 \in B^s_{q,1}(\HS)$, 
problem \eqref{ns:1} admits unique solutions $\rho$ and $\bv$
satisfying the regularity conditions:
\begin{equation}\label{main.reg} \begin{aligned}
\rho-\rho_0 &\in L_1((0, T), B^{s+1}_{q,1}(\HS)) \cap W^1_1((0, T), B^s_{q,1}(\HS)),
\\
\bv &\in L_1((0, T), B^{s+2}_{q,1}(\HS)^N) \cap W^1_1((0, T), B^s_{q,1}(\HS)^N)
\end{aligned}\end{equation}
provided that $\|\tilde\rho_0-\tilde\eta_0\|_{B^{s+1}_{q,1}(\HS)}.
\leq \sigma$
%and $\bv_0$ satisfies the compatibility condition: $\bv_0|_{\pd\HS}=0$.
\end{thm}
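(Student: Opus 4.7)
The plan is to convert the convection difficulty into a quasilinear perturbation on a fixed domain via a Lagrangian change of variables, linearize around the constant reference state $(\rho_*,0)$, and close the argument by a contraction mapping estimate based on maximal $L_1$-regularity of the linearization in the Besov scale. Concretely, writing $\rho=\rho_*+\eta$ and introducing Lagrangian coordinates $y$ through $x=y+\int_0^t\bu(y,\tau)\,d\tau$, the non-slip condition $\bu|_{\pd\HS}=0$ ensures that the transformation is a diffeomorphism of $\HS$ onto itself that fixes the boundary and that the transport term $\bv\cdot\nabla\rho$ is eliminated; the continuity equation becomes $\eta_t+\rho_*\dv\bu=\mathcal{N}_1(\eta,\bu)$ with $\mathcal{N}_1$ quadratic in $(\eta,\int_0^t\nabla\bu)$, while the momentum equation becomes
\[
\rho_*\bu_t-\alpha\Delta\bu-\beta\nabla\dv\bu+P'(\rho_*)\nabla\eta=\mathcal{N}_2(\eta,\bu),
\]
where $\mathcal{N}_2$ collects the geometric perturbation of the elliptic operator, the convective remainder, and $\nabla\bigl(P(\rho_*+\eta)-P(\rho_*)-P'(\rho_*)\eta\bigr)$.

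The linear principal part is the constant-coefficient compressible Stokes system with Dirichlet data, and I would invoke for it maximal $L_1$-regularity in the scale $B^{s+1}_{q,1}(\HS)\times B^s_{q,1}(\HS)^N$: for right-hand sides in $L_1(0,T;B^{s+1}_{q,1}\times B^s_{q,1}{}^N)$ and initial data in the same space, there should be a unique solution with the regularity asserted in \eqref{main.reg} and a uniform-in-$T$ bound on a fixed time interval. This would come from the analytic-semigroup representation on the Besov scale with third index $1$, whose trace space at $t=0$ of the maximal regularity class is exactly $B^{s+1}_{q,1}\times B^s_{q,1}{}^N$. With this in hand I would set up, for small $T$ and $R$, a closed ball $\mathcal{B}_{T,R}$ in the solution class described by \eqref{main.reg} and map $(\tilde\eta,\tilde\bu)\in\mathcal{B}_{T,R}$ to the solution of the linearisation whose source is $(\mathcal{N}_1,\mathcal{N}_2)$ evaluated at $(\tilde\eta,\tilde\bu)$. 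The nonlinear estimates are product and composition estimates in Besov spaces; the range $-1+N/q\leq s<1/q$ and $N-1<q<\infty$ provides exactly what is needed, namely $B^{s+1}_{q,1}\hookrightarrow B^{N/q}_{q,1}\hookrightarrow L_\infty$ (so that $\int_0^t\nabla\bu\,d\tau$ stays small in $L_\infty$ and the Lagrangian map remains a diffeomorphism), together with the multiplier property of $B^{s+1}_{q,1}$ on $B^s_{q,1}$. Schematically one obtains a bound of the form $\|(\mathcal{N}_1,\mathcal{N}_2)(\tilde\eta,\tilde\bu)\|_{L_1(B^{s+1}_{q,1}\times B^s_{q,1}{}^N)}\leq C(\sigma+T^\kappa+R)R$ for some $\kappa>0$; choosing $R$, $T$ and $\sigma$ small, together with the analogous estimate on differences, yields invariance and contraction, and hence a unique fixed point. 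Pulling back through the (small) Lagrangian map produces $(\rho,\bv)$ solving \eqref{ns:1} with the stated regularity.

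The main obstacle is the maximal $L_1$-regularity of the linearised compressible Stokes system in the half-space with non-slip boundary, which is the analytic input underpinning the contraction: the $L_1$ endpoint in time lies outside the reach of operator-valued Fourier multiplier theorems, which require $1<p<\infty$, and must instead be obtained by a delicate analytic-semigroup and resolvent analysis tailored to the Besov scale $B^s_{q,1}$. Moreover, the compressible Stokes operator in $\HS$ couples a hyperbolic density mode with a parabolic velocity mode, so the resolvent problem is not a direct perturbation of the incompressible Stokes problem; one must analyse a genuinely coupled boundary value problem whose Lopatinskii determinant governs the admissible $(s,q)$ range, and it is there that the constraint $N-1<q$ enters. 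Once that linear theory is available, the contraction argument sketched above reduces to a careful but routine exercise in product estimates in Besov spaces on the half-space.
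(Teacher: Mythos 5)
Your first step (Lagrangian coordinates to kill $\bv\cdot\nabla\rho$, then a contraction built on $L_1$ maximal regularity in the Besov scale) matches the paper's strategy. But there is a genuine gap in the choice of linearization point, and it is not a cosmetic one.

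You linearize around the constant state $(\rho_*,0)$, so your linear principal part is the \emph{constant-coefficient} Lam\'e/compressible-Stokes operator. The trouble is that the theorem does not assume $\rho_0$ is close to the constant $\rho_*$: the only smallness hypothesis is $\|\tilde\rho_0-\tilde\eta_0\|_{B^{s+1}_{q,1}}\le\sigma$, while $\tilde\eta_0\in B^{s+1}_{q,1}(\HS)$ is an arbitrary (fixed, possibly large) reference profile. Writing $\rho=\rho_*+\eta$ as you do, the initial value is $\eta|_{t=0}=\tilde\rho_0$, which is $O(1)$; terms like $\tilde\rho_0\,\pd_t\bu$ in the momentum equation and the quadratic remainder $\nabla\bigl(P(\rho_*+\eta)-P(\rho_*)-P'(\rho_*)\eta\bigr)$ are then of size $O(\|\tilde\rho_0\|_{B^{s+1}_{q,1}})\cdot R$, not $o(1)\cdot R$, so the contraction constant does not go below $1$ by shrinking $T$, $R$, $\sigma$. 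Your bound $\|(\mathcal N_1,\mathcal N_2)\|\le C(\sigma+T^\kappa+R)R$ tacitly assumes $\tilde\eta_0$ (equivalently $\tilde\rho_0$) is small, which is not given.

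The paper avoids this by linearizing at the variable reference density $\eta_0(x)=\rho_*+\tilde\eta_0(x)$: the linear problem \eqref{s:1} has coefficient $\eta_0(x)$ in both equations, and the leftover coefficient in the nonlinearity is $\eta_0-\rho_0-\theta$, which \emph{is} small (of order $\sigma$ plus the perturbation $\theta$). This forces one to prove $L_1$ maximal regularity for a variable-coefficient Stokes system, which is the real content of Sections 2 and 3: one first establishes resolvent estimates for the variable-coefficient Lam\'e problem by a localization/freezing argument (Theorem \ref{thm:4.0}, using Lemmas \ref{lem:APH}, \ref{Prop:B.1}, \ref{Prop.B2}), then treats the pressure coupling as a lower-order perturbation (Theorem \ref{thm:4.1}), and finally passes from resolvent bounds to $L_1$-in-time bounds by decomposing the resolvent $\bv=\bv_1+\bv_2$ with $|\lambda|^{-\sigma/2}$ and $|\lambda|^{-(1-\sigma/2)}$ decay in shifted Besov spaces and interpolating ($\ell^1_1=(\ell^{1-\sigma/2}_\infty,\ell^{1+\sigma/2}_\infty)_{1/2,1}$) in Theorem \ref{thm:t.2}. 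Your suggestion that the $L_1$ endpoint ``would come from the analytic-semigroup representation... whose trace space... is exactly $B^{s+1}_{q,1}\times B^s_{q,1}$'' is in the right spirit (this is essentially Da Prato--Grisvard), but you would still need the corresponding semigroup/resolvent theory with the non-constant coefficient $\eta_0(x)$, and that is precisely what is missing from your sketch. If you restrict to the special case $\tilde\eta_0\equiv 0$ (i.e.\ initial density close to a constant), your plan would close given the constant-coefficient $L_1$ theory; for the theorem as stated you need the variable-coefficient linearization.
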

\begin{remark}\label{rem:1}
The condition  $-1 + N/q \leq s < 1/q$ requires that $-1+N/q < 1/q$.  Thus, the condition
$N-1 < q$ is necessary for our argument.  On the other hand, the requirement of $s < 1/q$
comes from our linear theory. To use the Abidi-Paicu-Haspot theory for the Besov
space estimate of the products of functions (cf. Lemma 8 in Sect. 2 below), we have to assume that
$-N/q' < s < N/q$.  Since we assume that $-1+N/q \leq s$, this condition is 
satisfied, because$ -N/q' < -1+N/q$, that is $N > 1$. 
\end{remark}
R. Danchin and R. Tolksdorf \cite{DT22} proved the local and global well-posedness 
of equations \eqref{ns:1} in the $L_1$ in time and $B^{N/q}_{q,1} \times B^{N/q -1}_{q,1}$ in space 
maximal regularity framework for some $q \in (2, min(4, 2N/(N-2))$, 
and the main assumption is that the fluid domain is bounded.  
In particular, they consider only the case where $s=N/q$ in our notation for thier local well-posedness
theory. 
To obtain  the $L_1$ in time  maximal regularity of solutions to the linearized equations,
so called Stokes equations in the compressible fluid flow case, in \cite{DT22} 
they used their extended version of 
Da Prato and Grisvard theory \cite{DG}, which was a first result concerning $L_1$ maximal regularity
for  continuous analytic semigroups.  In \cite{DT22} , they assumed that the fluid domain is bounded,
which  seems to be necessary to obtain the linear theory for Lam\'e equations cf. \cite[Sect. 3]{DT22}
in their argument  \par
The final goal of our study is 
to solve equations \eqref{ns:1}  if the fluid domain is a general $C^2$ class domain. 
If the fluid domain is the whole space, a number of results have been estabilished 
\cite{CD10, D08, AP07, H11} and references  given therein.   
Thus, our interest is in the initial boundary value problem case.
As a first step of our study, in this paper we consider equations in the half-space, 
namely the model problem for the initial boundary value problem.

\subsection{Problem Reformulation}\label{sec.1.1}
To prove Theorem \ref{thm:1}, it is advantageous to transfer 
equations \eqref{ns:1} to equations in  Lagrange coordinates.
In fact, 
the convection term $\bv\cdot\nabla\rho$ in the material derivative disappears
in the equations of Lagrange coordinates. \par
Let $\bu(x, t)$ be the velocity field in  Lagrange coordinates: 
$x=(x_1, \ldots, x_N)$ and we consider  Lagrange transformation:
$$y = X_\bu(x, t) := x + \int^t_0 \bu(x, \tau)\,\d\tau,
$$
where equations \eqref{ns:1} are written in Euler coordinates: $y=(y_1, \ldots, 
y_N)$. We assume that 
\begin{equation}\label{assump:2}
\Bigl\|\int^T_0 \nabla \bu(\cdot, \tau)\,\d\tau\Bigr\|_{L_\infty(\HS)}
\leq c_0
\end{equation}
with some small constant $c_0>0$, and then for each $t \in (0, T)$, 
the map: $X_\bu(x, t)  = y$ is a 
$C^1$ diffeomorphism from $\HS$ onto $\Phi(\HS)$ under the assumption that 
$\bu \in L_1((0, T), B^{s+2}_{q,1}(\HS)^N)$
with $-1 + N/q \leq s < 1/q$ (cf. Danchin et al \cite{DHMT}).  Moreover,
using an argument due to Str\"ohmer \cite{Str:89}, we have $\Phi(\HS) = \HS$, 
and so as a conclusion, $\Phi(\HS)$ is a $C^1$ diffeomorphism from 
$\HS$ onto $\HS$.\par 
We shall drive equations in Lagrange coordinates. Let 
$\BA_\bu$ is the Jacobi matrix of transformation: $y = X_\bu$, that is
$$\BA_\bu = \frac{\pd x}{\pd y} =(\frac{\pd y}{\pd x})^{-1} 
=\Bigl(\BI + \int^t_0\nabla \bu(x, \tau)\,\d\tau\Bigr)^{-1}
= \sum_{j=0}^\infty \Bigl(\int^t_0\nabla \bu(x, \tau)\,\d\tau\Bigr)^j, $$
which is well-defined under the smallness assumption \eqref{assump:2}, 
where $\BI$ denotes the $N\times N$ identity matrix. 
We have the following 
well-known formulas:
\begin{equation}\label{trans:1}\begin{aligned}
\nabla_y & = \BA_{\bu}^\top \nabla_x, \quad
\dv_y (\,\cdot\,)  = \BA_{\bu}^\top \colon \nabla_x (\,\cdot\,)
= \dv_x(\BA_{\bu} (\,\cdot \, )), \\
\nabla_y \dv_y (\, \cdot \,) & = 
\BA_{\bu}^\top \nabla_x ((\BA_{\bu}^\top - \BI) \colon \nabla_x (\, \cdot \, ))
+ \BA_{\bu}^\top \nabla_x \dv_x (\, \cdot \,), \\
\Delta_y (\,\cdot\,) &= \dv_y\nabla_y(\, \cdot \,) 
= \dv_x(\BA_{\bu} \BA_{\bu}^\top\nabla_x (\, \cdot \,))
= \dv_x((\BA_{\bu} \BA_{\bu}^\top-\BI)\nabla_x (\cdot)) + \Delta_x (\, \cdot \,).
\end{aligned}\end{equation}
 Transformation law \eqref{trans:1}
transforms the system of equations \eqref{ns:1} into the following system of equations: 
\begin{equation}\label{ns:2}\left\{\begin{aligned}
\pd_t\rho+\rho\dv\bu = F(\rho, \bu)& 
&\quad&\text{in $\HS\times(0, T)$}, \\
\rho\pd_t\bu - \alpha\Delta \bu  -\beta\nabla\dv\bu 
+  \nabla P(\rho)
 = 
 \bG(\rho, \bu)& &\quad&\text{in $\HS\times(0, T)$}, \\
\bu|_{\pd\HS} =0, \quad (\rho, \bu)|_{t=0} = (\rho_0, \bu_0)
& &\quad&\text{in $\HS$}.
\end{aligned}\right.\end{equation}
Here, we have set
\begin{equation}\label{term:1}\begin{aligned}
F(\rho, \bu) & =  \rho((\BI-\BA_\bu):\nabla\bu) 
\\
\bG(\rho, \bu)& = 
(\BI-(\BA_\bu^\top)^{-1})(\rho\pd_t\bu -\alpha\Delta\bu)  
 +\alpha(\BA_\bu^\top)^{-1}\dv((\BA_\bu\BA_\bu^\top-\BI):\nabla\bu)\\
&+\beta\nabla((\BA_\bu^\top-\BI):\nabla\bu).
\end{aligned}\end{equation}

For equations \eqref{ns:2}, we shall prove the following theorem.
\begin{thm}\label{thm:2} Let $N-1 < q < \infty$ and $-1+N/q \leq s < 1/q$. 
Let $\eta_0 = \rho_* + \tilde\eta_0$ be a given initial data such that 
$\tilde\eta_0 \in B^{s+1}_{q,1}(\HS)$ and for some
 positive constants $\rho_1$ and $\rho_2$, the assumption \eqref{assump:0}
holds. 
Then, there exist constants $\delta>0$ and $T>0$ such that 
for any initial data $\rho_0 \in B^{s+1}_{q,1}(\HS)$
and $\bu_0 \in B^s_{q,1}(\HS)^N$,   problem \eqref{ns:2} admits 
 unique solutions $\rho$ and $\bu$ satisfying
the regularity conditions:
$$\rho-\rho_0 \in W^1_1((0, T), B^{s+1}_{q,1}(\HS)), 
\quad 
\bu \in L_1((0, T), B^{s+2}_{q,1}(\HS)^N) \cap 
W^1_1((0, T), B^s_{q,1}(\HS)^N). 
$$
provided that  $\|\rho_0 - \eta_0\|_{B^{s+1}_{q,1}(\HS)}\leq \sigma$.
\end{thm}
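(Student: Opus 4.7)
The plan is to prove Theorem \ref{thm:2} by a Banach fixed point argument built on the linear $L_1$-in-time maximal regularity for the compressible Stokes (Lam\'e) system with non-slip boundary in the half-space, which we establish earlier in the paper through an analytic semigroup approach. I first rewrite \eqref{ns:2} by freezing the coefficients of the principal part at the reference profile $\eta_0=\rho_*+\teta_0$:
\begin{equation*}
\left\{\begin{aligned}
\pd_t\rho+\eta_0\dv\bu &= \CF(\rho,\bu),\\
\eta_0\pd_t\bu-\alpha\Delta\bu-\beta\nabla\dv\bu+P'(\eta_0)\nabla\rho &= \CG(\rho,\bu),
\end{aligned}\right.
\end{equation*}
together with $\bu|_{\pd\HS}=0$ and $(\rho,\bu)|_{t=0}=(\rho_0,\bu_0)$, where $\CF(\rho,\bu) := (\eta_0-\rho)\dv\bu+F(\rho,\bu)$ and $\CG$ collects $(\eta_0-\rho)\pd_t\bu$, the Taylor remainder $-\nabla(P(\rho)-P(\eta_0)-P'(\eta_0)(\rho-\eta_0))$, and $\bG(\rho,\bu)$ from \eqref{term:1}. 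Because $\teta_0$ is small in $B^{s+1}_{q,1}(\HS) \hookrightarrow L_\infty(\HS)$ (the embedding holds since $s+1\geq N/q$ with summability index $1$), the new left-hand side is a small perturbation of the constant-coefficient system with coefficients $\rho_*$ and $P'(\rho_*)$; a Neumann-series argument yields the linear maximal regularity estimate
\begin{equation*}
\|(\rho-\eta_0,\bu)\|_{\CE_T} \leq C\bigl(\|\rho_0-\eta_0\|_{B^{s+1}_{q,1}}+\|\bu_0\|_{B^s_{q,1}}+\|\CF\|_{L_1(0,T;B^{s+1}_{q,1})}+\|\CG\|_{L_1(0,T;B^s_{q,1})}\bigr),
\end{equation*}
with $C$ independent of small $T$, where $\CE_T$ denotes the space defined by the regularity conditions of Theorem \ref{thm:2}.

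Next I fix $R,\sigma,T>0$ and restrict to the closed subset of pairs $(\trho,\tilde\bu)\in\CE_T$ with the prescribed initial data, with $\|(\trho-\eta_0,\tilde\bu)\|_{\CE_T}\leq R$, and with $\|\trho-\eta_0\|_{L_\infty((0,T)\times\HS)}\leq\sigma$ so that the mass density bounds \eqref{assump:0} persist along the iteration. On this set the series $\BA_{\tilde\bu}=\sum_{j\geq 0}(\int_0^t\nabla\tilde\bu)^j$ converges absolutely once $\int_0^T\|\nabla\tilde\bu\|_{L_\infty}\,d\tau\leq\int_0^T\|\nabla\tilde\bu\|_{B^{s+1}_{q,1}}\,d\tau$ is made $\leq c_0$, which can be arranged by shrinking $T$ relative to $R$. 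I then define the map $\Phi(\trho,\tilde\bu):=(\rho,\bu)$ as the solution of the linear system with $\CF,\CG$ evaluated at $(\trho,\tilde\bu)$. The nonlinear estimates --- bounding each term of $F,\bG$ and the Taylor remainder for $P$ in the appropriate $L_1$-in-time Besov space --- use the Besov product inequality (Lemma 8, of Abidi--Paicu--Haspot type), the embedding $B^{s+1}_{q,1}(\HS)\hookrightarrow L_\infty(\HS)$, and the composition estimate for smooth functions of the strictly positive quantity $\trho$. Each factor of $\BI-\BA_{\tilde\bu}$ or $\BI-(\BA_{\tilde\bu}^\top)^{-1}$ contributes $\int_0^T\|\nabla\tilde\bu\|_{B^{s+1}_{q,1}}\,d\tau=o(1)$ as $T\to 0$, while each factor of $\trho-\eta_0$ contributes $\sigma$ in $L_\infty$ or $R$ in $B^{s+1}_{q,1}$. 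Choosing $R$ of the order of the initial data norm, then $\sigma\ll R$, then $T$ sufficiently small makes $\Phi$ a self-map; applying the same estimates to differences $\Phi(\trho_1,\tilde\bu_1)-\Phi(\trho_2,\tilde\bu_2)$ gives the contraction property.

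The hardest estimate, and the point where the index range in Remark \ref{rem:1} becomes essential, is the term $(\BI-(\BA_{\tilde\bu}^\top)^{-1})\trho\,\pd_t\tilde\bu$ inside $\bG$: the factor $\pd_t\tilde\bu$ only lives in $L_1((0,T);B^s_{q,1})$, the lowest-regularity space of the scheme, so to estimate the product in that same space I must place $(\BI-(\BA_{\tilde\bu}^\top)^{-1})\trho$ in $L_\infty((0,T);B^{s+1}_{q,1})$ and invoke the Besov product inequality on the range $-N/q'<s<N/q$. It is precisely this requirement that forces $-1+N/q\leq s$ (and hence $N-1<q$), while the upper bound $s<1/q$ comes from the linear boundary-trace theory used to invert the Lam\'e operator. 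Once this estimate is in hand the rest of the scheme is routine, and Banach's fixed point theorem produces the unique solution of Theorem \ref{thm:2}.
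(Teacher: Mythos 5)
Your overall architecture (Lagrangian reformulation, solve a linearized system with $L_1$ maximal regularity, Banach fixed point with norms shrunk by taking $T$ and $\sigma$ small) matches the paper's strategy in Section 4, and your identification of where the index constraint $-1+N/q\leq s<1/q$ enters in the nonlinear estimates is broadly consistent with the paper. However, there is a genuine gap in the step where you obtain the linear maximal regularity estimate.

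You claim that the linearized system with coefficients $\eta_0(x)=\rho_*+\teta_0(x)$ can be treated as a small perturbation of the constant-coefficient system (coefficients $\rho_*$, $P'(\rho_*)$) via a Neumann series, "because $\teta_0$ is small in $B^{s+1}_{q,1}$." That hypothesis is not available. Theorem~\ref{thm:2} only assumes $\teta_0\in B^{s+1}_{q,1}(\HS)$ together with the pointwise bounds \eqref{assump:0}; the Besov norm $\|\teta_0\|_{B^{s+1}_{q,1}}$ may be arbitrarily large. The smallness parameter $\sigma$ controls $\|\rho_0-\eta_0\|_{B^{s+1}_{q,1}}$, i.e.\ the distance of the actual initial density from the reference profile $\eta_0$, not the size of $\teta_0$ itself. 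A global Neumann series off the constant-coefficient operator therefore does not converge in general, and the linear theory needs a different argument. The paper handles large $\teta_0$ by a partition-of-unity localization in Section~2: one freezes $\eta_0$ at each point $x_0$ and uses the key fact (\cite[Proposition B.1]{DT22}, reproduced as Lemma~\ref{Prop:B.1}) that $\|\psi_{x_0,d}(\eta_0-\eta_0(x_0))\|_{B^{N/q}_{q,1}}\to 0$ as $d\to 0$ uniformly in $x_0$; for the far field one uses Lemma~\ref{Prop.B2}. Only these \emph{local} smallnesses allow the Neumann-series inversion, and a separate commutator argument recombines the local solutions. Without this your scheme only proves a weaker theorem with an extra smallness hypothesis on $\teta_0$.

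A secondary point: the linearized pressure term should be $\nabla(P'(\eta_0)\rho)$, as in \eqref{s:1}, not $P'(\eta_0)\nabla\rho$; the difference $(\nabla P'(\eta_0))\rho$ is lower order but it changes the structure of what you must absorb versus what you must perturb, and since $\nabla\teta_0$ is only in $B^s_{q,1}$ (not bounded), the two formulations are not interchangeable without comment. Finally, the paper also splits the linear solve into an affine piece carrying the $-\nabla P(\rho_0)$ source and nonzero initial velocity (shrunk by taking $T$ small) and a zero-data piece carrying the nonlinear sources; your proposal does not separate these, but that is a cosmetic difference and not a gap.
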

\subsection{$L_1$ theory for the Stokes equations}~ To prove Theorem \ref{thm:2}, the key issue
is the $L_1$ maximal regularity theorem for the linearized equations of \eqref{ns:2} at
initial mass density $\eta_0(x) = \rho_* + \tilde\eta_0(x)$ with $\tilde\eta_0(x) 
\in B^{s;1}_{q,1}(\HS)$, which read as
\begin{equation}\label{s:1}\left\{\begin{aligned}
\pd_t\Pi + \eta_0(x)\dv\bU &=F&\quad&\text{in $\HS\times(0, \infty)$}, \\
\eta_0(x)\pd_t\bU - \alpha\Delta\bU - \beta\nabla\dv\bU + \nabla(P'(\eta_0(x))\Pi)
& = \bG &\quad&\text{in $\HS\times(0, \infty)$}, \\
\bU|_{\pd\HS} = 0, \quad (\Pi, \bU)|_{t=0} = (\rho_0, \bu_0)&&\quad
&\text{in $\HS$}.
\end{aligned}\right.\end{equation}
We shall prove the following theorem in Sect. 3 below, which will be used to prove Theorem \ref{thm:2}.
\begin{thm}\label{thm:evol} Let $N-1 < q < \infty$, $-1+N/q \leq s < 1/q$, and $T>0$.  Assume that 
$\tilde \eta_0(x) \in B^{s+1}_{q,1}(\HS)$ and that the assumption \eqref{assump:0} holds. 
Then, there exist positive  constants $\gamma>0$ and $C>0$ such that 
for any initial data $(\rho_0, \bu_0)$ and right membes $(F, \bG)$ such that $(\rho_0, \bu_0) \in 
B^{s+1}_{q,1}(\HS) \times B^s_{q,1}(\HS)^N$, 
$$e^{-\gamma t}F \in L_1(\BR_+, B^{s+1}_{q,1}(\HS)), \quad e^{-\gamma t}\bG 
\in L_1(\BR_+, B^s_{q,1}(\HS)^N),$$
%and the compatibility condition: $\bu_0|_{\pd\HS} = 0$, 
then  the initial boundary problem \eqref{s:1} admits unique solutions $(\Pi, \bU)$ with 
$$e^{-\gamma t}\Pi \in W^1_1(\BR_+, B^{s+1}_{q,1}(\HS)), \quad
e^{-\gamma t}\bU \in L_1(\BR_+, B^{s+2}_{q,1}(\HS)^N) \cap W^1_1(\BR_+, B^s_{q,1}(\HS)^N)
$$
possessing the estimate:
\begin{align*}
&\|e^{-\gamma t}(\Pi, \pd_t\Pi)\|_{L_1(\BR_+, B^{s+1}_{q,1}(\HS))} 
+ \|e^{-\gamma t}\bU\|_{L_1(\BR_+, B^{s+2}_{q,1}(\HS))} + \|e^{-\gamma t}
\pd_t \bU\|_{L_1(\BR_+, B^s_{q,1}(\HS))} \\
&\quad \leq C(\|(\rho_0, \bu_0)\|_{B^{s+1}_{q,1}(\HS)\times B^s_{q,1}(\HS)}
+ \|e^{-\gamma t}(F, \bG)\|_{L_1(\BR_+, B^{s+1}_{q,1}(\HS)\times B^s_{q,1}(\HS))}).
\end{align*}
Here and in the sequel,  we set $\BR_+ = (0, \infty)$, and
$$\|e^{-\gamma t}f\|_{L_1(\BR_+, X)} = \int_0^\infty e^{-\gamma t}\|f(\cdot, t)\|_X\,
\d t.
$$
\end{thm}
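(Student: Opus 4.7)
The plan is to recast \eqref{s:1} as an abstract Cauchy problem $\pd_t U + \CA U = \CF$ on the base space $X := B^{s+1}_{q,1}(\HS)\times B^s_{q,1}(\HS)^N$, where $U=(\Pi,\bU)$ and the closed operator $\CA$ encodes the Dirichlet condition $\bU|_{\pd\HS}=0$ in its domain. The goal is to prove that $-(\CA+\gamma)$ generates a bounded analytic semigroup on $X$ for some large $\gamma>0$, whereupon the extended Da Prato--Grisvard $L_1$ maximal regularity theorem from \cite{DT22} (applicable to merely analytic semigroups) supplies the claimed estimate after multiplication by $e^{-\gamma t}$.

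The first step is the constant-coefficient problem, freezing $\eta_0\equiv\rho_*$ and $P'(\eta_0)\equiv P'(\rho_*)$. I would study the resolvent equations $\lambda\Pi+\rho_*\dv\bU=F$ and $\rho_*\lambda\bU-\alpha\Delta\bU-\beta\nabla\dv\bU+P'(\rho_*)\nabla\Pi=\bG$. Eliminating $\Pi=\lambda^{-1}(F-\rho_*\dv\bU)$ reduces the problem to a Lam\'e-type resolvent equation for $\bU$ alone,
\begin{equation*}
\rho_*\lambda\bU-\alpha\Delta\bU-\bigl(\beta+P'(\rho_*)\rho_*\lambda^{-1}\bigr)\nabla\dv\bU=\bG-P'(\rho_*)\lambda^{-1}\nabla F,\qquad \bU|_{\pd\HS}=0,
\end{equation*}
which is strongly elliptic for $|\lambda|$ large in a sector because $\alpha>0$ and $\alpha+\beta>0$. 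Using the partial Fourier transform in the tangential variables $x'=(x_1,\ldots,x_{N-1})$, I would solve the resulting ODE in $x_N$, extract the explicit half-space Green's symbols (split into Poisson-like boundary layer contribution and a full-space part), and obtain sectorial bounds in Besov norms via Littlewood--Paley block analysis of the resolvent symbols. This yields analyticity of the semigroup on $X$ and, by the extended Da Prato--Grisvard theorem, $L_1$ maximal regularity with the required $e^{-\gamma t}$ weight.

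The second step handles the variable coefficient $\eta_0=\rho_*+\tilde\eta_0$ by perturbation. The equations are rewritten as the constant-coefficient system with right-hand side augmented by $-\tilde\eta_0\dv\bU$, $-\tilde\eta_0\pd_t\bU$, and $-\nabla((P'(\eta_0)-P'(\rho_*))\Pi)$. Using the embedding $B^{s+1}_{q,1}(\HS)\hookrightarrow L_\infty(\HS)$ (valid since $s+1\geq N/q$) together with the Abidi--Paicu--Haspot product estimates of Lemma 8 in Sect. 2, these perturbation terms are controlled in $L_1(\BR_+; B^{s+1}_{q,1}(\HS))\times L_1(\BR_+; B^s_{q,1}(\HS)^N)$ by the maximal-regularity norm of $(\Pi,\bU)$; the factor $\gamma^{-1}$ arising from the time integration against $e^{-\gamma t}$ allows absorption into the left-hand side for $\gamma$ sufficiently large, so a Neumann-series / contraction argument yields existence, uniqueness and the stated estimate.

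The main obstacle is the endpoint regularity at $s=-1+N/q$, where classical Mikhlin Fourier multiplier theorems fail in Besov spaces and the sectorial estimate must instead be established via a careful dyadic Littlewood--Paley analysis of the explicit half-space symbols, paying attention to the boundary-layer terms produced by the Dirichlet condition. A secondary difficulty is structural: the density equation has no spatial principal part, so the coupled system is hyperbolic--parabolic, and analyticity emerges only after the algebraic reduction $\Pi=\lambda^{-1}(F-\rho_*\dv\bU)$ at the resolvent level; consequently the perturbation argument must track errors consistently through this reduction, in particular verifying that the term $P'(\rho_*)\rho_*\lambda^{-1}\nabla\dv\bU$ does not destroy the ellipticity of $\alpha\Delta+(\beta+P'(\rho_*)\rho_*\lambda^{-1})\nabla\dv$ for $|\lambda|$ large.
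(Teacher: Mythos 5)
Your plan correctly identifies the algebraic reduction $\Pi=\lambda^{-1}(F-\eta_0\dv\bU)$ and the role of the half-space Lam\'e resolvent analyzed via partial Fourier transform and Littlewood--Paley decomposition; these are indeed the starting points the paper uses (Sect.~2, Theorem~\ref{thm:kuo} citing Kuo's thesis, and Theorems~\ref{thm:4.0}, \ref{thm:4.1}). However, the synthesis step in your proposal --- invoking the extended Da Prato--Grisvard $L_1$ maximal regularity theorem from \cite{DT22} --- is precisely the step the authors state they \emph{cannot} use in the half-space. The introduction notes that the \cite{DT22} route relies essentially on the fluid domain being bounded in order to set up the linear Lam\'e theory in their framework; the whole point of the present paper is to devise an alternative. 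Consequently, a proof written along your second and third paragraphs would stall at the crucial ``generation implies $L_1$-MR'' step.

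The mechanism the paper actually uses, and which your proposal misses, is a \emph{quantitative shifted-resolvent} structure. Theorem~\ref{thm:3} does not just give the sectorial estimate $\|\lambda(\rho,\bv)\|_{\CH^s_{q,1}}+\|(\lambda^{1/2}\bar\nabla,\bar\nabla^2)\bv\|_{B^s_{q,1}}\le C\|(f,\bg)\|_{\CH^s_{q,1}}$; it additionally decomposes $\bv=\bv_1+\bv_2$ where $\bv_1$ obeys
$\|(\lambda,\lambda^{1/2}\bar\nabla,\bar\nabla^2)\bv_1\|_{B^s_{q,1}}\le C|\lambda|^{-\sigma/2}\|\bg\|_{B^{s+\sigma}_{q,1}}$ and
$\|(\lambda,\lambda^{1/2}\bar\nabla,\bar\nabla^2)\pd_\lambda\bv_1\|_{B^s_{q,1}}\le C|\lambda|^{-(1-\sigma/2)}\|\bg\|_{B^{s-\sigma}_{q,1}}$,
while $\bv_2$ (and $\rho$) obey $|\lambda|^{-1}$ and $|\lambda|^{-2}$ decay from $\CH^s_{q,1}$ data. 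Feeding this into the contour-integral representation of $T(t)$ turns the two shifted resolvent bounds into two \emph{time-decay} bounds $t^{-1+\sigma/2}$ and $t^{-1-\sigma/2}$ on $\|\bar\nabla^2 T_{21}(t)\bg\|_{B^s_{q,1}}$, and the $L_1$-in-time estimate is then closed by a dyadic block estimate plus the real interpolation identity $\ell^1_1=(\ell^{1-\sigma/2}_\infty,\ell^{1+\sigma/2}_\infty)_{1/2,1}$. Without establishing those $\pm\sigma$-shifted bounds (which in turn require $s\pm\sigma\in(-1+1/q,1/q)$ and feed the density-of-$C^\infty_0$ argument), you have no way to close the $L_1$ estimate in the unbounded half-space. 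A secondary but real discrepancy: your proposal does the variable-coefficient perturbation at the level of the evolution equation, absorbing errors for large $\gamma$; the paper must instead perform the perturbation at the resolvent level (Theorem~\ref{thm:4.0}, via a partition of unity with boundary, interior, and far-field patches plus Lemmas~\ref{Prop:B.1}--\ref{Prop.B2}), precisely because the shifted $\pm\sigma$ estimates need to be preserved through the perturbation, and a direct ``absorb-in-$L_1(\BR_+)$'' Neumann series would not manifestly propagate them.
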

In order to prove Theorem \ref{thm:3}, we use the properties of solutions to the corresponding 
generalized resolvent problem:
\begin{equation}\label{s:2}\left\{\begin{aligned}
\lambda \rho+\eta_0\dv\bv = f& 
&\quad&\text{in $\HS$}, \\
\eta_0(x)\lambda\bv - \alpha\Delta \bv  -\beta\nabla\dv\bv 
+  \nabla(P'(\eta_0) \rho) = \bg& &\quad&\text{in $\HS$}, \\
\bv|_{\pd\HS} =0.
\end{aligned}\right.\end{equation}
To state our main result for equations \eqref{s:2}, we introduce 
a parabolic sector  $\Sigma_\mu$ defined by setting 
\begin{equation}\label{resol:0}
\Sigma_\mu = \{\lambda \in \BC\setminus\{0\} \mid |\arg\lambda| \leq \pi-\mu\}
\quad \Sigma_{\mu, \gamma} = \{\lambda \in \Sigma_\mu \mid |\lambda| \geq \gamma\}.
\end{equation}
 and functional spaces $\CH^s_{q,1}(\HS)$ and $\CD^s_{q,1}(\HS)$ and their norms defined by setting 
\begin{equation}\label{space:1} \begin{aligned}
\CH^s_{q,1}(\HS) & = \{(f, \bg) \in B^{s+1}_{q,1}(\HS) \times B^s_{q,1}(\HS)^N\}, \\
\CD^s_{q,1}(\HS) & = \{(\rho, \bu) \in B^{s+1}_{q,1}(\HS)\times B^{s+2}_{q,1}(\HS)^N 
\mid  \bu|_{\pd\HS} = 0\}, \\
\|(f, &\bg)\|_{\CH^s_{q,1}(\HS)} = \|f\|_{B^{s+1}_{q,1}(\HS)} + \|\bg\|_{B^s_{q,1}(\HS)}, \\
\|(f, &\bg)\|_{\CD^s_{q,1}(\HS)} = \|f\|_{B^{s+1}_{q,1}(\HS)} + \|\bg\|_{B^{s+2}_{q,1}(\HS)}, 
\end{aligned}\end{equation}
Then, we shall show the following theorem.
\begin{thm}\label{thm:3}
Let $1 < q < \infty$ and $-1 + N/q \leq  s < 1/q$.  Assume that $s$ satisfies 
\eqref{assump:0}. 
Let $\eta_0(x)= \rho_* + \tilde\eta_0(x)$ with $\tilde\eta_0(x) \in B^{s+1}_{q,1}(\HS)$. 
%Let $\tilde\eta^\epsilon_0(x)$ be a 
%regularization of $\tilde\eta_0$ satisfying assumptions \eqref{appro:1.1}
%and \eqref{appro:1.2} and set $\eta^\epsilon_0(x) = \rho_* + \tilde\eta_0(x)$. 
Then, the following three assertions hold. \par
\thetag1 ~ There exist 
constants $\gamma>0$ and $C$ 
such that for any $\lambda \in \Sigma_\mu+ \gamma$ and 
$(f, \bg) \in \CH^s_{q,1}(\HS)$, 
problem \eqref{s:2} admits a unique solution
$(\rho, \bv) \in \CD^s_{q,1}(\HS)$
posssessing the estimate:
\begin{equation}\label{est:1.0}
\|\lambda(\rho, \bv)\|_{\CH^s_{q,1}} 
+ \|(\lambda^{1/2}\bar\nabla, \bar\nabla^2)\bv\|_{B^s_{q,1}} \leq C\|(f, \bg)\|_{\CH^s_{q,1}(\HS)}
\end{equation}
for every $\lambda \in \Sigma_\mu + \gamma$. 
\par
\thetag2~Let $\sigma>0$ be a small number such that 
$-1+1/q < s-\sigma < s+\sigma < 1/q$. 
Then, there exist $\bv_1$ and $\bv_2$ such that $\bv_i \in B^{s+2}_{q,1}(\HS)$ $(i=1,2)$,
$\bv = \bv_1+\bv_2$, and there hold 
\begin{equation}\label{fundest.2**}\begin{aligned}
\|(\lambda, \lambda^{1/2}\bar\nabla, \bar\nabla^2)\bv_1\|_{B^s_{q,1}(\HS)}
&\leq C|\lambda|^{-\frac{\sigma}{2}}\|\bg\|_{B^{s+\sigma}_{q,1}(\HS)}, \\
\|(\lambda, \lambda^{1/2}\bar\nabla, \bar\nabla^2)\pd_\lambda\bv_1\|_{B^s_{q,1}(\HS)}
&\leq C|\lambda|^{-(1-\frac{\sigma}{2})}\|\bg\|_{B^{s-\sigma}_{q,1}(\HS)}
\end{aligned}\end{equation} 
for every $\lambda \in \Sigma_\mu + \gamma$ and $\bg \in C^\infty_0(\HS)$, as well as 
\begin{equation}\label{fundest.3**}\begin{aligned}
\|(\lambda, \lambda^{1/2}\bar\nabla, \bar\nabla^2)\bv_2\|_{B^s_{q,1}(\HS)}
\leq C|\lambda|^{-1}\|(f, \bg)\|_{\CH^s_{q,1}(\HS)}, \\
\|(\lambda, \lambda^{1/2}\bar\nabla, \bar\nabla^2)\pd_\lambda\bv_2\|_{B^s_{q,1}(\HS)}
\leq C|\lambda|^{-2}\|(f, \bg)\|_{\CH^s_{q,1}(\HS)}
\end{aligned}\end{equation}
as for any $\lambda \in \Sigma_\mu + \gamma$
and $(f, \bg) \in \CH^s_{q,1}(\HS)$. 
\par
\thetag3~ 
There exist
constants $\gamma$ and $C$ such that for every $\lambda \in \Sigma_\mu + \gamma$
and $(f, \bg) \in \CH^s_{q,1}(\HS)$, there hold 
\begin{equation}\label{rho:1}\begin{aligned}
\|\rho\|_{B^{s+1}_{q,1}(\HS)} &\leq C|\lambda|^{-1}
\|(f, \bg)\|_{\CH^s_{q,1}(\HS)},  \\
\|\rho\|_{B^{s+1}_{q,1}(\HS)} &\leq C|\lambda|^{-2}
\|(f, \bg)\|_{\CH^s_{q,1}(\HS)}.
\end{aligned}\end{equation}
In the statement of \thetag1,  \thetag2 and \thetag3, 
the constants $\gamma$ and $C$ depend on $\rho_*$ and $\|\tilde\eta_0\|_{B^{s+1}_{q,1}}$.
\end{thm}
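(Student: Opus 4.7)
The plan is to reduce \eqref{s:2} to a second-order resolvent problem for $\bv$ alone, solve the constant-coefficient model ($\eta_0 \equiv \rho_*$) by explicit partial Fourier representation, and then pass to variable $\eta_0$ via a Neumann series argument in which $\tilde\eta_0$ is a small-in-$\lambda^{-1/2}$ perturbation. From the first equation one obtains $\rho = \lambda^{-1}(f - \eta_0 \dv\bv)$, and substitution into the momentum equation gives a Lamé-type system
\begin{equation*}
\eta_0 \lambda \bv - \alpha \Delta \bv - \beta \nabla \dv \bv - \lambda^{-1} \nabla\bigl(P'(\eta_0) \eta_0 \dv \bv\bigr) = \bg - \lambda^{-1}\nabla(P'(\eta_0) f),
\end{equation*}
in which the $\lambda^{-1}\nabla\dv$ term is a lower-order correction of the usual Lamé principal part when $|\lambda|$ is large.

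In the constant-coefficient case I would apply the partial Fourier transform in $x' = (x_1,\dots,x_{N-1})$, so that each component becomes an ODE in $x_N > 0$ with Dirichlet condition at $x_N = 0$. Splitting $\bv$ tangentially into a part parallel to $\xi'$ and a part orthogonal to $\xi'$ diagonalizes the principal part; the characteristic roots have real part bounded below by $\sqrt{|\xi'|^2 + c|\lambda|}$, and enforcing $\bv|_{x_N=0}=0$ together with decay as $x_N\to\infty$ gives a closed-form representation of $\bv$ as a sum of tangential Fourier multipliers against Poisson-type exponentials $e^{-A x_N}, e^{-B x_N}$. The next step is to verify that the symbols are of Mikhlin type uniformly in $\lambda\in\Sigma_\mu+\gamma$: combining this with the Littlewood–Paley description of $B^s_{q,1}$ (in the range $-1+N/q\leq s<1/q$) yields the multiplier bound \eqref{est:1.0} for the constant-coefficient problem.

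For part \thetag2 I would split the symbolic representation of $\bv$ into $\bv_1$, the term depending only on $\bg$ through the worst parabolic factor, and $\bv_2$, the boundary/pressure correction collecting everything multiplied by $\lambda^{-1}$ (including the contribution from $f$). The trade-off $|\lambda|^{-\sigma/2}\|\bg\|_{B^{s+\sigma}_{q,1}}$ then comes from the elementary symbol estimate $|A|^{\sigma}\lesssim |\xi'|^\sigma+|\lambda|^{\sigma/2}$, which lets one exchange $\sigma$ tangential derivatives of $\bg$ for a factor $|\lambda|^{\sigma/2}$; the estimates for $\pd_\lambda \bv_1$ and $\pd_\lambda \bv_2$ follow by differentiating the symbolic formulas and using that each application of $\pd_\lambda$ to $A$ or $B$ produces an additional factor of order $|\lambda|^{-1/2}|A|^{-1}$, so that $\pd_\lambda$ acts morally as division by $\lambda$. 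For variable coefficients I would write $\eta_0=\rho_*+\tilde\eta_0$ and treat the differences $\tilde\eta_0 \lambda\bv$, $(P'(\eta_0)-P'(\rho_*))\rho$, $\bigl(\eta_0-\rho_*\bigr)\dv\bv$, etc.\ as perturbations. Each such term is a product of a $B^{s+1}_{q,1}$ coefficient (which lies in $L_\infty$ since $s+1>N/q$) against the solution, estimated by the Abidi–Paicu–Haspot product rule of Lemma 8, and each carries at least one power $|\lambda|^{-1/2}$ from \eqref{est:1.0}. Choosing $\gamma$ large depending on $\rho_*$ and $\|\tilde\eta_0\|_{B^{s+1}_{q,1}}$ makes the perturbation operator a contraction on the relevant norm, and the Neumann series inversion reproduces \thetag1 in full generality. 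The bounds \eqref{rho:1} for $\rho$ and (presumably) $\pd_\lambda\rho$ are then immediate from the identity $\rho = \lambda^{-1}(f - \eta_0\dv\bv)$ combined with \thetag1 and \thetag2.

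The main obstacle is the derivation of the decomposition \thetag2 with its $\pd_\lambda$ estimates, which are the nontrivial inputs into the $L_1$-maximal regularity Theorem \ref{thm:evol} via a Dunford-type contour integral. The delicate point is to split the symbolic formula for $\bv$ so that both the spatial Besov norms and the derivatives in $\lambda$ behave as stated, while separating precisely where each factor of $A^{-1}$ (or $B^{-1}$) contributes a spatial smoothing and where it contributes a power of $|\lambda|^{-1/2}$. A secondary but nonnegligible difficulty is to propagate this decomposition through the Neumann-series perturbation argument in the variable-coefficient step without losing the $\pd_\lambda$ bounds, which requires tracking the split at every iterate rather than only the final resolvent estimate.
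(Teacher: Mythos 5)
Your overall reduction (substitute $\rho=\lambda^{-1}(f-\eta_0\dv\bv)$ to obtain a Lam\'e-type system, solve the constant-coefficient model by partial Fourier transform, then perturb) matches the skeleton of the paper, and the constant-coefficient half is indeed delegated there to Theorem \ref{thm:kuo} (Kuo's thesis). But your variable-coefficient step has a genuine gap that the paper's argument is specifically built to avoid.

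You propose to treat $\tilde\eta_0\lambda\bv$ as a perturbation and claim it ``carries at least one power $|\lambda|^{-1/2}$ from \eqref{est:1.0}'', so that taking $\gamma$ large yields a contraction. This is false: the uniform resolvent bound gives $\|\lambda\bv\|_{B^\nu_{q,1}}\leq C\|\bg\|_{B^\nu_{q,1}}$ with no decay in $|\lambda|$, so by Lemma \ref{lem:APH} the perturbation operator $\bg\mapsto\tilde\eta_0\lambda\bv$ has norm bounded only by $C\|\tilde\eta_0\|_{B^{N/q}_{q,1}}$, independent of $\lambda$. Since $\tilde\eta_0$ is not assumed small, enlarging $\gamma$ buys you nothing for this term (it is of the same order as the principal term $\rho_*\lambda\bv$). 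The only perturbation terms that genuinely gain a power of $\lambda^{-1}$ are the pressure correction $\lambda^{-1}\nabla(P'(\eta_0)\eta_0\dv\bv)$ and the inhomogeneity $\lambda^{-1}\nabla(P'(\eta_0)f)$, and those are the ones the paper absorbs by taking $\gamma_2$ large (Theorem \ref{thm:4.1}). For the zeroth-order coefficient $\tilde\eta_0\lambda\bv$ the paper instead freezes coefficients: it covers $\overline{\HS}$ by small balls $B_d(x_0)$ near the boundary, small balls $B_{d_1}(x_1)$ in the interior, and a far-field region $|x|\geq R$, and on each piece it uses Lemma \ref{Prop:B.1} (Danchin--Tolksdorf) or Lemma \ref{Prop.B2} to make $\|\psi_{x_0,d}(\tilde\eta_0(\cdot)-\tilde\eta_0(x_0))\|_{B^{N/q}_{q,1}}$ (respectively $\|\psi_R\tilde\eta_0\|_{B^{N/q}_{q,1}}$) small \emph{by choice of $d$ and $R$}, which yields a contractible local perturbation operator $\bS_{x_0}(\lambda)$. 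The commutator terms $\bU_{x_0}(\lambda)$ from the cutoffs are of lower spatial order and hence do carry $|\lambda|^{-1/2}$, and \emph{those} are absorbed by large $\gamma$. This spatial-smallness mechanism is essential and is entirely absent from your proposal.

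Your secondary worry --- propagating the $\bv=\bv_1+\bv_2$ decomposition and the $\pd_\lambda$ estimates through the Neumann series --- is also handled differently in the paper. Rather than splitting the symbol and tracking the split through the iteration (which you correctly flag as delicate), the paper first establishes the \emph{uniform} estimate \eqref{est:1.1} for the variable-coefficient Lam\'e resolvent $T(\lambda)$ via localization, and \emph{then} obtains the refined bounds \eqref{est:1.1*}, \eqref{est:1.1**} by rewriting $\rho_*\lambda\bv-\alpha\Delta\bv-\beta\nabla\dv\bv=\bg-\tilde\eta_0\lambda\bv$ and applying Kuo's constant-coefficient fine estimates to the whole right-hand side, using the already-proved uniform bound on $\lambda\bv$ to control $\tilde\eta_0\lambda\bv$. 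The decomposition for the full Stokes operator is then simply $\CS^1(\lambda)=T(\lambda)$ and $\CS^2(\lambda)=T(\lambda)\lambda^{-1}R(\lambda)(\bI-\lambda^{-1}R(\lambda))^{-1}$, where $R(\lambda)\bh=\nabla(P'(\eta_0)\eta_0\dv T(\lambda)\bh)$; all of the $|\lambda|^{-1}$, $|\lambda|^{-2}$ gains in \eqref{fundest.3**} come from the explicit factor $\lambda^{-1}$ sitting in front of $R(\lambda)$, so no symbol-level split needs to be propagated. You would do well to replace your perturbation step with this two-stage structure: localization first for the uniform bound, then absorption of $\tilde\eta_0\lambda\bv$ into the right-hand side for the refined $\sigma$-estimates.

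Finally, you need the hypothesis $N/q\leq s+1$ (i.e.\ $-1+N/q\leq s$) in order to put $\tilde\eta_0\in B^{s+1}_{q,1}\hookrightarrow B^{N/q}_{q,1}$ when applying Lemma \ref{lem:APH} to the pressure terms such as $\nabla(P'(\eta_0)\eta_0\dv\bv)$; your outline invokes $s+1>N/q$ only to get $L_\infty$ control, but the product estimate in $B^{N/q}_{q,1}$ is the sharper requirement and is where the constraint is really used.
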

\par
To conclude this subsection, we explain the relationship between 
Theorem \ref{thm:evol} and Theorem \ref{thm:3} with a simple example.  
Let $X$ and $Y$ be two Banach spaces such that $X$ is continuously embedded into 
$Y$ and let $A$ be a linear closed operator from $X$ into $Y$. We assume that $A$
generates a $C_0$ analytic semigroup $\{T(t)\}_{t\geq0}$ on $Y$ associated with
an evolution equation
$$\pd_t u - Au = 0 \quad\text{for $t>0$}, \quad u|_{t=0} = f. $$
Here, $\pd_t$ denotes the derivative with respect to $t$.   According to 
semigroup theory cf. \cite{YK},  we have
\begin{alignat} 2\label{semigroup:1}
\|\pd_tT(t)f\|_Y &\leq Ce^{\gamma t}t^{-1}\|f\|_Y&\quad&\text{ for $f \in Y$}, \\
\label{semigroup:2}
\|\pd_tT(t)f \|_Y &\leq Ce^{\gamma t}\|f\|_X &\quad&\text{ for $f \in X$}, 
\end{alignat}
for some constants $C$ and $\gamma$. If we consider the case where $1 < p < \infty$,
then, we have
\begin{alignat*}2
\|\pd_tT(t)f\|_Y^p &\leq C^p e^{p\gamma t}t^{-p} \|f\|_Y^p &\quad&\text{for $f \in Y$}, \\
\|\pd_tT(t)f\|_Y^p &\leq C^p e^{p\gamma t} \|f\|_X^p &\quad&\text{for $f \in X$}, 
\end{alignat*}
Thus, choosing $\theta \in (0, 1)$ in such a way that $1 = (1-\theta) p $, that is 
$\theta = 1-1/p$, by real interpolation theory, we see that 
$$\Bigr\{\int^\infty_0 (e^{-\gamma t}\|\pd_tT(t)f\|_{Y})^p\,\d t\Bigr\}^{1/p}\leq C \|f\|_{(Y, X)_{1-1/p, p}}.
$$
(cf. \cite{SS08}). But, this idea does not apply to the $p=1$ case. Our idea is to find two spaces
$Y_\sigma$ and $Y_{-\sigma}$ corresponding to a small positive constant $\sigma$ such that 
$Y_\sigma \subset Y  \subset Y_{-\sigma}$ and there hold
\begin{alignat*}2
\|\pd_tT(t)f\|_Y &\leq C e^{\gamma t}t^{-(1-\sigma)} \|f\|_{Y_\sigma} &\quad&\text{for $f \in Y_\sigma$}, \\
\|\pd_tT(t)f\|_Y &\leq C e^{\gamma t}t^{-(1+\sigma)} \|f\|_{Y_{-\sigma}} &\quad&
\text{for $f \in Y_{-\sigma}$}. 
\end{alignat*}
Then, by real interpolation method, we have
$$\int^\infty_0 e^{-\gamma t}\|\pd_tT(t)\|_Y\,\d t \leq C\|f\|_{(Y_\sigma, Y_{-\sigma})_{1/2, 1}}
$$
If we may choose $Y$ in such a way that $Y = (Y_\sigma, Y_{-\sigma})_{1/2, 1}$, then we have
$$\int^\infty_0 e^{-\gamma t}\|\pd_tT(t)\|_Y\,\d t \leq C\|f\|_Y.
$$
This is $L_1$ in time maximal regularity for $\{T(t)\}_{t \geq 0}$. \par
In view of the spectral analysis point of view, we assume that there exists a $\gamma>0$ such that 
for any $\lambda \in \Sigma_\mu + \gamma$, there hold
\begin{alignat*}2
\|\lambda(\lambda\bI-A)f\|_Y &\leq C\|f\|_Y& \quad&\text{for any $f \in Y$}, \\
\|\lambda(\lambda\bI-A)f\|_Y &\leq C|\lambda|^{-\sigma}
\|f\|_{Y_\sigma}& \quad&\text{for any $f \in Y_\sigma$}, \\
\|\lambda \pd_\lambda(\lambda\bI-A)f\|_Y &\leq C|\lambda|^{-(1-\sigma)}
\|f\|_{Y_{-\sigma}}& \quad&\text{for any $f \in Y_{-\sigma}$}.
\end{alignat*}
Then, the first estimate implies the generation of $C_0$ analytic semigroup 
$\{T(t)\}_{t\geq 0}$, and the second and third estimates imply the corresponding 
estimates of $\{T(t)\}_{t\geq 0}$. Of couse, in the compressible Stokes equations case,
the situation is much comlicated, but the essential idea is the same as above.
 Detailed proof will be given in the proof of Theorem \ref{thm:t.2} in Sect. 3 below.

\subsection{Short History} The mathematical study of compressible viscous fluids has a long history since
1950's. In fact, the first result was a uniquness theorem prove by Graffi
\cite{Grafi} and Serrin \cite{Serrin}. A local in time existence theorem was proved by Nash \cite{Nash62},
Itaya \cite{Itaya71} and Vol'pert and Hudjaev \cite{VH} in $\BR^3$ in the H\"older continuous function space.
After these works by pioneers,  much study has been done with the development of modern 
mathematics. 
We do not aim to give an extensive list of references, but refer to the following references and references 
given therein only for  unique existence theorems of strong solutions. \par 
A local in time unique existence thoerem
was proved by Solonnikov \cite{Sol80} in $W^{2,1}_q$ with $N < q < \infty$, by Tani \cite{Tani77} in the 
H\"older spaces, by Str\"ohmer \cite{Str:89} with analytic semigroup approach
and by Enomoto and Shibata \cite{ES}   in the $L_p$-$L_q$ maximal regularity class, where $\CR$ boundedness
of solution operators have been used.  If the fluid domain is $\BR^N$, 
the local well-posedness was proved by Charve and Danchin \cite{CD10} 
in the $L_1$ in time framework. 
\par
A global well-posedness was proved by
Matsumura and Nishida \cite{MN80, MN83} by energy methods and refer
to the survey paper by Shibata and Enomoto \cite{SE18} for several extensions of Matsumura and 
Nishida's work and the optimal decay properties of solutions  in the whole space and exterior domains. 
The global well-posedness in the $L_1$ in time framework was proved by Danchin \cite{D08} and also 
see Charve and Danchin \cite{CD10}, Abidi and Paicu \cite{AP07} and Haspot \cite{H11}. 
The global well-posedness in the $L_q$ maximal regularity framework ($1 < q < \infty$) 
 was proved by
Mucha and Zajaczkowski \cite{MZ}  and 
 in the $L_p$ in time and $L_q$ in space 
maximal regularity framework ($1 < p, q < \infty$) by Shibata \cite{S22}.
 Kagei and Kobayashi \cite{KK02, KK05} proved the global well-posedness
with optimal decay rate in the half-space and by Kagei \cite{Kagei08} in the layer domain.
Periodic solutions were treated by Valli \cite{Vali83}, Tsuda \cite{Tsuda16} and 
references given  therein. 
\subsection{Notation} The symbols $\BN$, $\BR$ and $\BC$ denote the set of all natural numbers, real numbers and 
complex numbers. Set $\BN_0 = \BN \cup\{0\}$. Let 
$L_q(\Omega)$, $W^m_q(\Omega)$ and $B^s_{q,r}(\Omega)$ 
denote the standard 
Lebesgue space, Sobolev space, and Besov space definded on a domain $\Omega$ in $N$ dimensional Euclidean space
$\BR^N$, while $\|\cdot\|_{L_q(\Omega)}$, $\|\cdot\|_{W^m_q(\Omega)}$, and $\|\cdot\|_{B^s_{q,r}(\Omega)}$ denote
their norms.  For time interval $I$, $L_q(I, X)$ and $W^1_q(I, X)$ denote respective $X$-valued Lebsgue space and 
Sobolev space of order 1.  $W^\alpha_q(I, X) = (L_q(I, X), W^1_q(I, X))_{\alpha, q}$
for $\alpha \in (0, 1)$. Here, the real interpolation functiors are denoted by  $(\cdot, \cdot)_{\theta, r}$
 for $\theta \in (0, 1)$ and $1 \leq r \leq\infty$. 
For $1\leq q < \infty$, we write
$$
\|f\|_{L_q(I, X)} = \Bigl(\int_I\|f(t)\|_X^q\,\d t\Bigr)^{1/q}, \quad \|e^{-\gamma t}f\|_{L_q(I, X)}
= \Bigl(\int_I (e^{-\gamma t}\|f(t)\|_X)^q\,\d t\Bigr)^{1/q}.
$$
Let 
$BC^0(I, X)$ denote the set of all  $X$-valued bounded continuous 
functions defined on $I$.  For any integer $m \geq 1$, $BC^m(I, X)$ denotes the set of all $X$-valued 
bounded continuous functions whose derivatives exist and bounded in $I$  up to order $m$. Set 
$$\|f\|_{BC(I, X)} = \sup_{t \in I}\|f(t)\|_X, \quad \|f\|_{BC^m(I,X)} = \|f\|_{BC^0(I, X)}
+ \sum_{j=1}^m \sup_{t\in I}\|(D_t^jf)(t)\|_X.$$

For  differentiation with respect to space variables $x=(x_1, \ldots, x_N)$,
$D^\alpha f :=\pd_x^\alpha f = \pd^{|\alpha|}f/{\pd x_1^{\alpha_1} \cdots \pd x_N^{\alpha_N}}
$ for multi-index $\alpha=(\alpha_1, \ldots, \alpha_N)$ with  $|\alpha| = \alpha_1+\cdots + \alpha_N$. 
For the notational simplicity, we write $\nabla f = \{ \pd_x^\alpha f \mid|\alpha|=1\}$, $\nabla^2 f
= \{\pd_x^\alpha f\mid |\alpha|=2\}$, $\bar\nabla f=(f, \nabla f)$, $\bar\nabla^2 f = 
(f, \nabla f, \nabla^2 f)$. For a Banach space $X$,  $\CL(X)$ denotes
the set of all bounded linear operators from $X$ into itself and 
$\|\cdot\|_{\CL(X)}$ denotes its norm.    
Let $\bI$ denote the identity operator
and $\BI$ the $N\times N$ identity matrix.  
For any Banach space $X$ with norm $\|\cdot\|_X$, $X^N = \{\bff = (f_1, \ldots, f_N) \mid f_i \in X \enskip(i=1, \ldots, N)\}$
and $\|\bff\|_X = \sum_{i=1}^N \|f_i\|_X$. For a vector $\bv$ and a matrix $\BA$, $\bv^\top$ and $\BA^\top$ denote
respective the transpose of $\bv$ and the transpose of $\BA$.  \par
The letter $C$ denotes a generic constant and $C_{a,b,\cdots} = C(a, b, \cdots)$ denotes the constant depending on
quantities $a$, $b$, $\cdots$. $C$, $C_{a,b,\cdots}$, and $C(a, b, \cdots)$ may change from line to line. 
\section{Spectral Analysis}\label{sec.2}

%Let $\tilde\eta_0 \in B^{s+1}_{q,1}(\HS)$ and set $\eta_0(x) = \rho_* + \tilde\eta_0(x)$.
%Let $\eta^\epsilon_0=\rho_*+\tilde\eta^\epsilon_0$ 
%is a regularization of $\eta_0$ satisfying the following 
%conditions:
%\begin{equation}
%\label{appro:1.1}
%\lim_{\epsilon\to0}\|\tilde\eta_0-\tilde\eta_0\|_{B^{s+1}_{q,1}(\HS)} =0. 
%\end{equation}
%For any $\epsilon>0$, there exists a constant
%$C_\epsilon>0$ such that 
%\begin{equation}
%\label{appro:1.2}
%\|\nabla \tilde\eta_0\|_{B^{N/q  }_{q,1}(\HS)} \leq C_\epsilon\|\tilde\eta_0\|_{B^{N/q}_{q,1}(\HS)}.
%\end{equation}
%If $N/q \leq s$, then $C_\epsilon$ is a constant independent of $\epsilon$, but 
%if $N/q > s$, then $C_\epsilon$ is a constant  such that $\lim_{\epsilon\to0} C_\epsilon=\infty$.   
%\par   
%In fact, let $\kappa \in C^\infty_0(\BR^N)$ such that ${\rm supp}\, \kappa \subset 
%\{x \in \BR^N \mid |x| \leq 1\}$ and $\int_{\BR^N} \kappa(x)\,\d x=1$, and set
%$\kappa_\epsilon(x) = \epsilon^{-N}\kappa(x/\epsilon)$.   Let $\tilde\zeta_0$ be an 
%extension of $\tilde\eta_0$ to $\WS$ such that $\tilde\zeta_0|_{\HS}=\tilde\eta_0$ in $\HS$
%and $\|\tilde\zeta_0\|_{B^{s+1}_{q,1}(\WS)} \leq C\|\tilde\eta_0\|_{B^{s+1}_{q,1}(\HS)}$.
%And then, we define 
%\begin{equation}\label{conv:1}
%\tilde\eta_0(x) = \int_{\WS} \kappa_\epsilon(x-y)\tilde\zeta_0(y)\,\d y.
%\end{equation}
%We see easily that this $\tilde\eta_0$ satisfies \eqref{appro:1.1} and \eqref{appro:1.2}.

In this section, we shall prove Theorem \ref{thm:3} as a perturbation from 
Lam\'e equations, which read as
\begin{equation}\label{lame:2}
\eta_0(x)\lambda \bv - \alpha\Delta\bv -\beta\nabla\dv\bv =
\bg \quad \text{in $\HS$},  \quad \bv|_{\pd\HS} =0
\end{equation}
for spectral parameter $\lambda \in \Sigma_\mu + \gamma$ with large enough $\gamma>0$.
Thus, we start with the existence theorem for equations \eqref{lame:2}.
\begin{thm}\label{thm:4.0}
Let $1 < q < \infty$ and  $-1 + 1/q < s < 1/q$.   Let $\sigma$ be a small positive number
such that $-1+1/q < s-\sigma < s < s+\sigma < 1/q$.  
 Let $\nu = s$ or $s\pm\sigma$.  
Assume that $\tilde\eta_0 \in B^{N/q}_{q,1}(\HS)$.    Moreover, $\eta_0(x)$ and $\rho_*$ satisfy
the assumptions \eqref{assump:0}.
Then, there exist constants 
$\gamma_1 > 0$ and $C>0$ depending on $s$, $\sigma$, and $\|\tilde\eta_0\|_{B^{N/q}_{q,1}(\HS)}$
such that for any $\lambda \in \Sigma_\mu+ \gamma_1$,
problem \eqref{lame:2} admits a unique solution
 $\bv \in B^\nu_{q,1}(\HS)^N$
satisfying  the estimate:
\begin{equation}\label{est:1.1}\begin{aligned}
\|(\lambda, \lambda^{1/2}\bar\nabla, \bar\nabla^2)
\bv\|_{B^{\nu}_{q,1}(\HS)} 
&\leq C\|\bg\|_{B^\nu_{q,1}(\HS)}, \\
\|(\lambda, \lambda^{1/2}\bar\nabla, \bar\nabla^2)
\pd_\lambda \bv\|_{B^{\nu}_{q,1}(\HS)} 
&\leq C|\lambda|^{-1}\|\bg\|_{B^\nu_{q,1}(\HS)}.
\end{aligned}\end{equation}
Moreover, for any $\lambda \in \Sigma_\mu + \gamma_1$ and $\bg \in C^\infty_0(\HS)^N$,
there holds
\begin{align}\label{est:1.1*}
\|(\lambda, \lambda^{1/2}\bar\nabla, \bar\nabla^2)\bv\|_{B^s_{q,1}(\HS)}
& \leq C(1+\|\tilde\eta_0\|_{B^s_{q,1}(\HS)})|\lambda|^{-\frac{\sigma}{2}}\|\bg\|_{B^{s+\sigma}_{q,1}(\HS)}
\end{align} as well as 
\begin{equation}\label{est:1.1**}
\|(\lambda, \lambda^{1/2}\bar\nabla, \bar\nabla^2)\pd_\lambda\bv\|_{B^s_{q,1}(\HS)}
+\|\bv\|_{B^s_{q,1}(\HS)}
 \leq C(1+\|\tilde\eta_0\|_{B^s_{q,1}(\HS)})|\lambda|^{-(1-\frac{\sigma}{2})}\|\bg\|_{B^{s-\sigma}_{q,1}(\HS)}.
\end{equation}
\end{thm}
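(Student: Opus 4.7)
The plan is to prove Theorem \ref{thm:4.0} in three steps: first solve the constant-coefficient Lam\'e resolvent problem (with $\eta_0\equiv\rho_*$), then pass to the variable-coefficient problem by a freezing-coefficient/parametrix argument, and finally extract the smoothing estimates \eqref{est:1.1*}--\eqref{est:1.1**} from the dyadic structure of the constant-coefficient kernel at the critical frequency $|\xi|^2\sim|\lambda|$.

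For the constant-coefficient problem $\rho_*\lambda\bv-\alpha\Delta\bv-\beta\nabla\dv\bv=\bg$ in $\HS$ with $\bv|_{\pd\HS}=0$, I would apply the partial Fourier transform in the tangential variable $x'=(x_1,\dots,x_{N-1})$ and solve the resulting ODE system in $x_N$. Two characteristic exponents appear,
\[
A(\xi',\lambda)=\sqrt{|\xi'|^2+\rho_*\lambda/\alpha},\qquad B(\xi',\lambda)=\sqrt{|\xi'|^2+\rho_*\lambda/(\alpha+\beta)},
\]
both with positive real part for $\lambda\in\Sigma_\mu+\gamma$, and the solution is an explicit combination of $e^{-Ax_N}$ and $e^{-Bx_N}$ whose coefficients are fixed by the no-slip condition, producing a solution operator $\CS_0(\lambda)$ with analytic dependence on $\lambda$. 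Applying a Littlewood--Paley decomposition in $x$, at dyadic frequency $2^j$ the relevant symbols are of order $\min(|\lambda|^{-1},2^{-2j})$ times Mikhlin-type multipliers; block-by-block $L_q$-bounds summed with weight $2^{j\nu}$ yield the $B^\nu_{q,1}$-estimate \eqref{est:1.1} for $\CS_0$, and differentiating the kernel in $\lambda$ produces an extra $(\lambda+|\xi|^2)^{-1}$ factor, which gives the $\pd_\lambda$-estimate with the gain $|\lambda|^{-1}$.

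For the variable-coefficient problem I would pick a scale $\epsilon>0$, a partition of unity $\{\varphi_\ell\}$ on $\overline{\HS}$ with supports of diameter $O(\epsilon)$ and dual cut-offs $\tilde\varphi_\ell\equiv1$ on $\supp\varphi_\ell$, and form the parametrix $\CP(\lambda)\bg:=\sum_\ell\varphi_\ell\,\CS_0(\lambda;x_\ell)(\tilde\varphi_\ell\bg)$ with $\eta_0$ frozen at a point $x_\ell\in\supp\varphi_\ell$. Applying $\eta_0\lambda-\alpha\Delta-\beta\nabla\dv$ to $\CP(\lambda)\bg$ produces three error types: commutators $[\Delta,\varphi_\ell]$ and $[\nabla\dv,\varphi_\ell]$, which lose one derivative but pair with the $|\lambda|^{-1/2}$ factor in \eqref{est:1.1} and can thus be absorbed for $\gamma$ large; the freezing error $(\eta_0(x)-\eta_0(x_\ell))\lambda\bv$, controlled by the Abidi--Paicu--Haspot product estimate (valid since $-N/q'<\nu<N/q$) and made small by shrinking $\epsilon$ via density of smooth functions in $B^{N/q}_{q,1}$; and partition-overlap terms, handled by the same mechanism. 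A Neumann series in $\bI+\text{error}$ then produces the true solution operator together with \eqref{est:1.1}, and the $\pd_\lambda$-estimate propagates because the parametrix is analytic in $\lambda$ and the error satisfies the corresponding derivative bound.

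The smoothing estimates reflect the two-scale structure of the resolvent. Splitting the $B^s_{q,1}$-sum at $2^{2j}\sim|\lambda|$ and using that at dyadic frequency $2^j$ the symbol of $\lambda\CS_0(\lambda)$ is of order $\min(1,|\lambda|\cdot 2^{-2j})$, one obtains \eqref{est:1.1*} with the $|\lambda|^{-\sigma/2}$ gain from the extra regularity $s+\sigma$ of $\bg$; analogously \eqref{est:1.1**} follows because the $\pd_\lambda$-kernel carries an additional $(\lambda+|\xi|^2)^{-1}$ factor, producing the exponent $1-\sigma/2$ when paired with datum in $B^{s-\sigma}_{q,1}$. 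The variable-coefficient versions follow by reapplying the parametrix argument, the $\sigma$-gain surviving the error absorption. The main obstacle will be the localization step: the commutator and freezing errors must retain enough gain in $|\lambda|$ and in $\epsilon$ to be absorbed both in the initial inversion and in the subsequent argument that extracts the smoothing exponents, and since $s$ lies in the narrow band $(-1+1/q,1/q)$ the product and multiplier estimates are near their endpoints, forcing careful coordination of $\sigma$ and $\epsilon$.
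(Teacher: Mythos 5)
Your overall blueprint --- solve the constant--coefficient Lam\'e resolvent problem (tangential Fourier transform plus an ODE in $x_N$), then pass to variable $\eta_0$ by freezing coefficients locally, absorbing the freezing error through the Abidi--Paicu--Haspot product estimate and a Neumann series, and absorbing the cutoff commutators using their $|\lambda|^{-1/2}$ gain --- is in essence the paper's strategy. The paper cites Kuo \cite{Kuo23} for the constant--coefficient estimates \eqref{fundest.1}--\eqref{fundest.3} (so your explicit two-exponent solution formula is effectively a reconstruction of that reference), and it uses the Danchin--Tolksdorf smallness Lemma~\ref{Prop:B.1} to make $\|\psi_{x_0,d}(\eta_0 - \eta_0(x_0))\|_{B^{N/q}_{q,1}}$ small uniformly in $x_0$, exactly as you propose. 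Two points, however, need attention.

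First, $\HS$ is unbounded, so a partition of unity $\{\varphi_\ell\}$ on $\overline{\HS}$ whose members all have diameter $O(\epsilon)$ is necessarily infinite, and summing Besov norms over an infinite family of cutoffs is not controlled merely by finite overlap: $\sum_\ell \|\tilde\varphi_\ell \bg\|_{B^\nu_{q,1}}$ is not $\lesssim \|\bg\|_{B^\nu_{q,1}}$ in general, so the parametrix bound and the commutator error bound are not obviously summable. The paper avoids this by using a \emph{finite} covering: finitely many boundary balls, finitely many interior balls, and one exterior region $\{|x|\geq 2R\}$ on which the coefficient is frozen at $\rho_*$; the smallness of $\|\psi_R\tilde\eta_0\|_{B^{N/q}_{q,1}}$ for large $R$ (Lemma~\ref{Prop.B2}) replaces the small-diameter smallness in that region. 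Your sketch should specify this far-field piece, or else explain how the infinite sum is controlled.

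Second, your route to the variable--coefficient smoothing estimates \eqref{est:1.1*}--\eqref{est:1.1**} (re-running the parametrix so that the $|\lambda|^{-\sigma/2}$ gain ``survives the error absorption'') is plausible in principle, since every operator in the parametrix composition is bounded on the three scales $B^{s\pm\sigma}_{q,1}$ and $B^s_{q,1}$, but it involves repeating the whole localization machinery with the two-scale kernel analysis threaded through the Neumann series. The paper does something much leaner: having proved the resolvent estimate \eqref{est:1.1} for $\nu = s+\sigma$, it rewrites the variable--coefficient equation as
\begin{equation*}
\rho_*\lambda\bv - \alpha\Delta\bv - \beta\nabla\dv\bv = \bg - \tilde\eta_0\lambda\bv,
\end{equation*}
applies the constant--coefficient smoothing estimate \eqref{fundest.2}, and controls the single error term by $\|\tilde\eta_0\lambda\bv\|_{B^{s+\sigma}_{q,1}} \leq C\|\tilde\eta_0\|_{B^{N/q}_{q,1}}\|\lambda\bv\|_{B^{s+\sigma}_{q,1}} \leq C\|\bg\|_{B^{s+\sigma}_{q,1}}$ using Lemma~\ref{lem:APH} and the already-established $\nu = s+\sigma$ resolvent bound. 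The $\pd_\lambda$ smoothing estimate \eqref{est:1.1**} is obtained the same way from the differentiated equation and the $\nu = s-\sigma$ resolvent bound. This one-step bootstrap is cleaner and would shorten your argument considerably.
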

\begin{remark} $C^\infty_0(\HS)$ is dense in $B^\nu_{q,1}(\HS)$ provided that $-1+1/q < \nu
< 1/q$ and $1 < q < \infty$.
\end{remark}
Before starting the proof of Theorem \ref{thm:4.0}, we show a lemma concerning the Besov norm estimates
of the product of functions. To this end, we start with following lemma.
\begin{lem}\label{lem:APH} Let $1 < q < \infty$ and $\nu \in \BR$.   If the condition $|\nu|
< N/q$ for $q \geq 2$ holds and otherwise the condition $-N/q' < \nu < N/q$ holds 
then for any $u \in B^\nu_{q,1}(\HS)$ and $v \in B^{N/q}_{q, \infty}(\HS) \cap L_\infty(\HS)$, 
there holds 
\begin{equation}\label{besovprod:1}
\|uv\|_{B^\nu_{q,1}(\HS)} \leq C_\nu\|u\|_{B^\nu_{q,1}(\HS)}\|v\|_{B^{N/q}_{q,1}(\HS)}.
\end{equation}
for some constant $C>0$ independent of $u$ and $v$. 
\end{lem}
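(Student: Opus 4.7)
The plan is to use Bony's paraproduct decomposition after reducing to the whole space via bounded extension operators, and then to estimate each piece block-by-block with Littlewood--Paley techniques. First I would fix a Stein-type extension operator $E$ that is bounded $B^\mu_{q,r}(\HS)\to B^\mu_{q,r}(\BR^N)$ for every $\mu$ in the relevant range and also maps $L_\infty(\HS)$ into $L_\infty(\BR^N)$. Setting $U=Eu$ and $V=Ev$, and using that the restriction $B^\nu_{q,1}(\BR^N)\to B^\nu_{q,1}(\HS)$ is bounded, it suffices to prove the analogue of \eqref{besovprod:1} on $\BR^N$.

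On $\BR^N$ I would write $UV = T_U V + T_V U + R(U,V)$ in Bony's decomposition:
\[
T_U V = \sum_{j} S_{j-1}U\cdot \Delta_j V, \qquad R(U,V) = \sum_{|j-k|\leq 1}\Delta_j U\cdot \Delta_k V,
\]
where $\Delta_j$ and $S_j$ are the usual Littlewood--Paley projections. Each paraproduct $T_U V$, $T_V U$ is spectrally localized in an annulus of radius $\sim 2^j$, so by the embedding $B^{N/q}_{q,1}(\BR^N)\hookrightarrow L_\infty(\BR^N)$, H\"older's inequality, and the $\ell^1$ summability built into $B^\nu_{q,1}$, one obtains
\[
\|T_U V\|_{B^\nu_{q,1}(\BR^N)} + \|T_V U\|_{B^\nu_{q,1}(\BR^N)} \leq C\|U\|_{B^\nu_{q,1}(\BR^N)}\|V\|_{B^{N/q}_{q,1}(\BR^N)},
\]
with no restriction on $\nu$.

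The main obstacle, and the source of the hypotheses on $\nu$, is the remainder $R(U,V)$. Each summand $\Delta_j U\cdot \Delta_k V$ is frequency-supported only in a ball of radius $\sim 2^{\max(j,k)}$ rather than an annulus, so $\Delta_\ell R(U,V)$ picks up contributions from all $k\geq \ell - N_0$ for a fixed $N_0$. Using Bernstein's inequality together with the representation $\|\Delta_j U\|_{L_q}\leq c_j\, 2^{-j\nu}\|U\|_{B^\nu_{q,1}}$ with $(c_j)\in\ell^1$, and similarly for $V$ in $B^{N/q}_{q,1}$, the double sum over $\ell$ and $k\geq \ell - N_0$ converges precisely under the stated conditions on $\nu$: for $q\geq 2$ one places $\Delta_k V$ in $L_\infty$ at a cost $2^{kN/q}$, which requires $|\nu|<N/q$; for $1<q<2$ one instead sends $\Delta_j U$ into $L_{q'}$ at a cost $2^{jN/q'}$, which produces the asymmetric lower bound $-N/q'<\nu$. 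Combining the three pieces and restricting back to $\HS$ gives the claim, along the lines of Abidi--Paicu and Haspot.
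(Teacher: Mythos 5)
The paper does not prove this lemma at all: it simply cites \cite[Cor.~2.5]{AP07} and \cite[Cor.~1]{H11}. Your strategy -- extend to $\BR^N$, then run Bony's decomposition -- is exactly the machinery those references rely on, so the framework is the right one. However, the way you have distributed the constraints between the three pieces contains a genuine error, and the remainder bound as written does not close for $\nu\le 0$.

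First, it is not true that the two paraproducts are bounded ``with no restriction on $\nu$.'' The term $T_V U$ is indeed unrestricted: one puts $S_{j-1}V$ in $L_\infty$ (via $B^{N/q}_{q,1}\hookrightarrow L_\infty$) and $\Delta_j U$ in $L_q$, which gives $\|T_VU\|_{B^\nu_{q,1}}\lesssim\|V\|_{L_\infty}\|U\|_{B^\nu_{q,1}}$ for any $\nu$. But for $T_U V$ the low-frequency factor is $S_{j-1}U$, and $U$ only lies in $B^\nu_{q,1}$, which does \emph{not} embed into $L_\infty$ when $\nu<N/q$. The correct estimate is $\|S_{j-1}U\|_{L_\infty}\lesssim 2^{j(N/q-\nu)_+}\|U\|_{B^\nu_{q,1}}$, and the resulting factor $2^{j(N/q-\nu)}$ is precisely absorbed by $2^{j\nu}\|\Delta_jV\|_{L_q}$ to produce the $B^{N/q}_{q,1}$-norm of $V$ \emph{because} $\nu\le N/q$. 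Thus the upper constraint $\nu<N/q$ (or $\nu\le N/q$) is actually forced by $T_U V$, not by the remainder.

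Second, the remainder argument you sketch only covers $\nu>0$. If you simply place $\Delta_k V$ in $L_\infty$, you arrive at
$\sum_{\ell}\sum_{j\ge\ell-N_0}2^{(\ell-j)\nu}a_j b_j$, whose inner geometric sum converges iff $\nu>0$; no value of $N/q$ enters, so this cannot yield $|\nu|<N/q$. To reach negative $\nu$ one must first pay Bernstein on the \emph{outer} block: $\|\Delta_\ell(\,\cdot\,)\|_{L_q}\lesssim 2^{\ell N/q}\|\cdot\|_{L_{q/2}}$ when $q\ge 2$ (then H\"older with $L_q\times L_q\to L_{q/2}$), giving $2^{(\ell-j)(\nu+N/q)}$ and hence the lower bound $\nu>-N/q$; or $\|\Delta_\ell(\,\cdot\,)\|_{L_q}\lesssim 2^{\ell N/q'}\|\cdot\|_{L_1}$ when $1<q<2$ (then $L_{q'}\times L_q\to L_1$), giving $2^{(\ell-j)(\nu+N/q')}$ and $\nu>-N/q'$. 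In passing, the claim that sending $\Delta_jU$ into $L_{q'}$ costs $2^{jN/q'}$ is also off: Bernstein gives a factor $2^{jN(1/q-1/q')}=2^{jN(2/q-1)}$, which equals $2^{jN/q'}$ only when $q=3/2$. So the conclusion of the lemma is right, the paraproduct framework is right, but the bookkeeping of where each constraint is incurred and what each Bernstein step costs needs to be corrected before the argument actually closes.
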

\begin{proof}   For a proof, refer to \cite[Cor. 2.5]{AP07} and \cite[Cor. 1]{H11}.
\end{proof}
{\bf Proof of  Theorem \ref{thm:4.0}.} To prove Theorem \ref{thm:4.0}, 
we shall construct an approximate solution 
for each point $x_0 \in \overline{\HS}$. Let $\nu = s$ or $s\pm\sigma$.    Recall that $\eta_0(x) = \rho_*
+\tilde \eta_0(x)$ and $\tilde\eta_0 \in  B^{N/q}_{q,1}(\HS)$.
To construct an approximation solution, we use a theorem for unique existence of solutions
of  the constant coefficient Lam\'e equations which read as
\begin{equation}\label{fund:1}
\gamma_0\lambda\bv -\alpha\Delta \bv - \beta\nabla\dv\bv 
=\bg\quad\text{in $\HS$}, \quad 
\bv|_{\pd\HS}=0.
\end{equation}
From Kuo \cite{Kuo23} the following theorem follows.
\begin{thm}\label{thm:kuo} Let $1 < q < \infty$ and $-1+1/q < \nu < 1/q$. 
Assume that $\alpha$ and $\beta$ are 
constants satisfying the conditions:
\begin{equation}\label{assump:1}
\alpha >0, \quad \alpha + \beta>0.\end{equation}
Moreover, we assume that there exist positive constants $M_1$ and $M_2$ such that 
$$M_1 \leq \gamma_0  \leq M_2.$$ 
Then, there exists a $\gamma_K > 0$ independent of $\gamma_0$
 such that for any $\lambda \in 
\Sigma_\mu + \gamma_K$ and  $\bg \in B^\nu_{q,1}(\HS)^N$, 
problem \eqref{fund:1} admits a unique solution
$\bv \in B^{\nu+2}_{q,1}(\HS)$ satisfying
the estimate:
\begin{equation}\label{fundest.1} 
\|(\lambda, \lambda^{1/2}\bar\nabla,
\bar\nabla^2)\bv\|_{B^\nu_{q,1}(\HS)} \leq C\|\bg\|_{B^\nu_{q,1}(\HS)}
\end{equation}
for some constant $C$.  \par
Moreover, let $-1+1/q < s < 1/q$ and let $\sigma>0$ be a small positive constant such that 
$-1+1/q < s-\sigma < s < s+\sigma < 1/q$.  Then, for any $\lambda \in \Sigma_\mu + \gamma_K$ and 
$\bg \in C^\infty_0(\HS)^N$
a solution $\bv \in B^{s\pm\sigma+2}_{q,1}(\HS)^N \cap B^{s+2}_{q,1}(\HS)^N$ 
of equations \eqref{fund:1} satisfies 
the following estimates: 
\begin{align}
\|(\lambda, \lambda^{1/2}\bar\nabla, \bar\nabla^2)\bv\|_{B^s_{q,1}(\HS)}
&\leq C|\lambda|^{-\frac{\sigma}{2}} \|\bg\|_{B^{s+\sigma}_{q,1}(\HS)}, 
\label{fundest.2} \\
\|(\lambda, \lambda^{1/2}\bar\nabla, \bar\nabla^2)\pd_\lambda\bv\|_{B^s_{q,1}(\HS)}
&\leq C|\lambda|^{-(1-\frac{\sigma}{2})} \|\bg\|_{B^{s-\sigma}_{q,1}(\HS)}.
\label{fundest.3}
\end{align} 
\par
Here, the constants $\gamma_K$ and $C$ depend on $M_1$,  $M_2$, and $\nu$, 
but  independent
of $\gamma_0$  as far as the assumption \eqref{assump:1} holds. 
\end{thm}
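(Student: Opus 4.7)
The plan is to construct the solution explicitly via partial Fourier transform in the tangential variables $x'=(x_1,\ldots,x_{N-1})$ combined with characteristic-root analysis in $x_N$. For $\lambda \in \Sigma_\mu + \gamma_K$ with $\gamma_K$ sufficiently large, the two decaying characteristic roots of the Lam\'e symbol are
\[
A(\xi',\lambda) = \bigl(|\xi'|^2 + \gamma_0\lambda/\alpha\bigr)^{1/2}, \qquad
B(\xi',\lambda) = \bigl(|\xi'|^2 + \gamma_0\lambda/(\alpha+\beta)\bigr)^{1/2},
\]
both of which satisfy $\RE A,\,\RE B \gtrsim |\xi'| + |\lambda|^{1/2}$ uniformly. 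I would first reduce to a whole-space problem by extending $\bg$ from $\HS$ to $\WS$ with a bounded Besov extension and solving $\gamma_0\lambda\tilde\bv - \alpha\Delta\tilde\bv - \beta\nabla\dv\tilde\bv = \tilde\bg$ via the matrix Fourier symbol $M_0(\lambda,\xi) = [\gamma_0\lambda\,\BI + \alpha|\xi|^2\BI + \beta\,\xi\xi^{\top}]^{-1}$.  Diagonalizing on the invariant subspaces parallel and perpendicular to $\xi$ shows that $M_0$ satisfies Mihlin-type bounds uniformly in $\lambda \in \Sigma_\mu+\gamma_K$, so standard Besov Fourier multiplier theory produces a whole-space solution $\tilde\bv$ with $\|(\lambda,\lambda^{1/2}\bar\nabla,\bar\nabla^2)\tilde\bv\|_{B^\nu_{q,1}(\WS)} \lesssim \|\bg\|_{B^\nu_{q,1}(\HS)}$.

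The restriction $\tilde\bv|_{\HS}$ then satisfies the equation but its trace $h := \tilde\bv|_{\pd\HS}$ is nonzero, so I would next solve the homogeneous boundary correction problem $\gamma_0\lambda\bv_1-\alpha\Delta\bv_1-\beta\nabla\dv\bv_1 = 0$ in $\HS$ with $\bv_1|_{\pd\HS} = -h$.  After $\mathcal F_{x'}$, the decaying solution in $x_N$ has the explicit form
\[
\widehat{\bv_1}(\xi',x_N) = \mathbf{a}(\xi')\,e^{-A(\xi',\lambda)\, x_N} + \mathbf{b}(\xi')\,e^{-B(\xi',\lambda)\, x_N},
\]
with the vectors $\mathbf{a},\mathbf{b}$ constrained so that the PDE holds (leaving exactly $N$ free scalar parameters in total) and then determined uniquely by the Dirichlet data through a Lopatinskii matrix that is invertible with uniform bounds on $\Sigma_\mu+\gamma_K$. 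The main obstacle, and the heart of the proof, is to show that the resulting Poisson-type operators, with kernels built from $e^{-Ax_N},e^{-Bx_N}$ and rational tangential symbols in $(\xi',A,B)$, act boundedly from $B^\nu_{q,1}(\pd\HS)$ into the full graph norm on the left-hand side of \eqref{fundest.1}.  My plan is a tangential Littlewood--Paley decomposition in $\xi'$: on each dyadic annulus $|\xi'|\sim 2^j$ one has $\RE A,\RE B\gtrsim 2^j+|\lambda|^{1/2}$, so $\|e^{-Ax_N}\|_{L_q(\BR_+, dx_N)} \lesssim (2^j+|\lambda|^{1/2})^{-1/q}$; these gains absorb the derivatives appearing in $\bar\nabla^2$ and $\lambda^{1/2}\bar\nabla$, and after multiplication by $2^{j\nu}$ the dyadic sums converge because the restriction $-1+1/q<\nu<1/q$ prevents trace singularities at the boundary.

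For the refined estimates \eqref{fundest.2}--\eqref{fundest.3} I would rerun the same symbolic analysis with the observation that, when the input is measured in the stronger space $B^{s+\sigma}_{q,1}$ and the output in $B^s_{q,1}$, each operator in the solution formula gains an extra factor $(2^j+|\lambda|^{1/2})^{-\sigma}\le |\lambda|^{-\sigma/2}$ on every annulus, yielding \eqref{fundest.2}. Since $\pd_\lambda$ of $A^2,B^2$ is constant, differentiating in $\lambda$ the whole-space symbol $M_0$ and the exponentials $e^{-Ax_N},e^{-Bx_N}$ introduces one further factor $(|\xi'|^2+|\lambda|)^{-1}$, and combining this with input in the weaker space $B^{s-\sigma}_{q,1}$ yields the $|\lambda|^{-(1-\sigma/2)}$ decay asserted in \eqref{fundest.3} (and in the second line of \eqref{est:1.1}). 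Uniqueness is then a by-product of the construction: any $B^{\nu+2}_{q,1}$ solution with vanishing data has tangential Fourier transform solving, at each $\xi'$, the ODE in $x_N$ whose only tempered solution vanishing at $x_N=0$ is identically zero.
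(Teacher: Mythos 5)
Your construction is a legitimate, self-contained route to the theorem, but it is not the one the paper takes: the paper does not reprove the half-space Lam\'e resolvent estimates at all. Its proof consists of two observations: (i) for $\gamma_0=1$ the unique solvability together with \eqref{fundest.1}--\eqref{fundest.3} is quoted from Kuo \cite{Kuo23} (the first author's thesis), and (ii) for general $\gamma_0$ one simply replaces $\lambda$ by $\gamma_0\lambda$ in the equation; since $M_1\le\gamma_0\le M_2$, the rescaled parameter $\gamma_0\lambda$ stays in $\Sigma_{\mu,\tilde\gamma}$ once $\lambda\in\Sigma_\mu+\gamma_K$ with $\gamma_K$ chosen large in terms of $M_1$, which yields constants depending only on $M_1$, $M_2$ and not on $\gamma_0$. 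Your plan essentially reconstructs what the cited reference must contain: a whole-space Fourier multiplier solution, a Dirichlet boundary correction by Poisson operators built from $e^{-Ax_N}$ and $e^{-Bx_N}$ with an invertible Lopatinskii matrix, dyadic estimates exploiting $\RE A,\,\RE B\gtrsim|\xi'|+|\lambda|^{1/2}$, and the extra factor $(|\xi'|^2+|\lambda|)^{-1}$ gained by $\pd_\lambda$, which is traded against the regularity gap $\pm\sigma$ of the data to produce the decay rates in \eqref{fundest.2}--\eqref{fundest.3}; the exponent bookkeeping there is correct. The steps you would still have to carry out in full are the boundedness of the Poisson-type operators on $B^\nu_{q,1}(\HS)$ for negative $\nu$ down to $-1+1/q$ (this is precisely where the restriction $-1+1/q<\nu<1/q$ enters, not merely through trace considerations but through zero-extension and duality for the Besov scale on $\HS$), and the uniformity of all symbol bounds in $\gamma_0\in[M_1,M_2]$, which is the content of the theorem's independence claim and which the paper obtains for free from the rescaling $\lambda\mapsto\gamma_0\lambda$. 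What the paper's route buys is brevity; what yours buys is a proof that does not rest on an unpublished reference.
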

\begin{remark} (1) The same assertions hold for the  whole space problem: 
\begin{equation}\label{fund:2}\begin{aligned}
\gamma_0\lambda\bv -\alpha\Delta \bv - \beta\nabla\dv\bv 
=\bg&&\quad&\text{in $\WS$}.
\end{aligned}\end{equation}
(2) The reason why we choose $-1+1/q < s < 1/q$ is that $C^\infty_0(\HS)$ is dense 
in $B^s_{q,1}(\HS)$, cf. \cite[pp.368--369]{M74}, \cite[p.132]{M76}, and \cite[Theorems 2.9.3, Theorem 2.10.3]{Tbook78}.
\\
(3) For any $\lambda \in \Sigma_\mu + \gamma$ and $\bg \in B^s_{q,1}(\HS)$, there holds
\begin{equation}
\label{fundest.3*}
\|\bv\|_{B^s_{q,1}(\HS)} \leq C|\lambda|^{-(1-\frac{\sigma}{2})}\|\bg\|_{B^{s-\sigma}_{q,1}(\HS)}.
\end{equation}
In fact, for any $\bg\in B^\nu_{q,1}(\HS)^N$, we define an operator $\CS_0(\lambda)\bg$
by $\bv = \CS_{\gamma_0}(\lambda)\bg$, where $\nu=s$ or $s\pm\sigma$, 
and $\bv$ is a unique solution of equations \eqref{fund:1}.
By Theorem \ref{thm:kuo}, $\CS_{\gamma_0}(\lambda)$ is well-defined and 
a $B^{\nu+2}_{q,1}(\HS)$ valued holomorphic function with respect
to $\lambda \in\Sigma_\mu + \gamma_K$.    Differentiating
equations \eqref{fund:1} with respect to $\lambda$, we have
\begin{equation}\label{fund:1*}
\gamma_0\lambda \pd_\lambda \bv- \alpha\Delta \pd_\lambda\bv - \beta\nabla\dv\pd_\lambda\bv
= -\gamma_0\bv \quad\text{in $\HS$}, \quad \pd_\lambda\bv|_{\pd\HS} = 0.
\end{equation}
Thus, we have $\pd_\lambda\bv = -\gamma_0\CS_{\gamma_0}(\lambda)\bv 
= -\gamma_0\CS_{\gamma_0}(\lambda)\CS_{\gamma_0}(\lambda)\bg$. 
Let $D^{\nu+2}_{q,1}(\HS) = \{\bu \in B^{\nu+2}_{q,1}(\HS) \mid
\bu|_{\pd\HS}=0\}$. Since $\CS_{\gamma_0}(\lambda)$ is a surjective map from $B^\nu_{q,1}(\HS)$ onto 
$D^{\nu+2}_{q,1}(\HS)$, and so the inverse map $\CS_{\gamma_0}(\lambda)^{-1}$
 exists and it is a surjective
map from $D^{\nu+2}_{q,1}(\HS)$ onto $B^\nu_{q,1}(\HS)$.  Thus, $\bv
= -\gamma_0^{-1}\CS_{\gamma_0}(\lambda)^{-1}\pd_\lambda\bv$. By \eqref{fundest.3}, we have
$$\|\bv\|_{B^s_{q,1}(\HS)} \leq C\|\bar\nabla^2\pd_\lambda\bv\|_{B^s_{q,1}} 
\leq C|\lambda|^{-(1-\frac{\sigma}{2})}\|\bg\|_{B^{s-\sigma}_{q,1}(\HS)},
$$
which shows \eqref{fundest.3*}.  From this consideration it follows that 
\eqref{fundest.3} and \eqref{fundest.3*} are equivalent.
\end{remark}
\begin{proof} When $\gamma_0=1$, by a result due to Kuo \cite{Kuo23} 
there exist positive constants $C$ and $\tilde\gamma$ such that 
the existense of solutions and \eqref{fundest.1}--\eqref{fundest.3} hold
for $\lambda \in \Sigma_{\mu, \tilde\gamma}$. 
Here, 
the constants $C$ and $\tilde\gamma>0$ depend only on $\alpha$ and $\beta$.   
 When $\gamma_0\not=1$,  the existence of solutions and estimates
\eqref{fundest.1}--\eqref{fundest.3} hold,  replacing $\lambda$ with $\gamma_0\lambda$,
provided that  $\gamma_0\lambda \in \Sigma_{\mu, \tilde\gamma}$. Since
$M_1 \leq \gamma_0 \leq M_2$, we see that $M_1|\lambda| \leq |\gamma_0\lambda| \leq M_2|\lambda|$. 
Thus, choosing $\tilde\gamma_K = \tilde\gamma M_1^{-1}$, we see that
$\gamma_0\lambda \in \Sigma_{\mu, \tilde\gamma}$ if
$\lambda \in \Sigma_{\mu, \tilde\gamma_K}$. From this consideration,
Theorem \ref{thm:kuo} follows from the $\gamma_0=1$ case by choosing 
$\gamma_K$ so large that $\Sigma_\mu + \gamma_K \subset \Sigma_{\mu, \tilde\gamma_K}$. 
Here, the constants $C$ and $\gamma_K$ depend on $\alpha$, $\beta$, $M_1$ and $M_2$.
\end{proof}

We continue the proof of Theorem \ref{thm:4.0}. First we consider the case where $x_0 \in \pd\HS$. 
We write 
$$B_d(x_0) = \{x \in \BR^N \mid |x-x_0| \leq d\}, 
\quad B_d = B_d(0).$$
 Let  
$\varphi \in C^\infty_0(B_2(0))$ and $\psi \in C^\infty_0(B_3(0))$
such that 
 $\varphi(x)=1$ for 
$x \in B_1(0)$ and $\psi(x) =1$ for $x \in B_{2}(0)$
 and set $\varphi_{x_0,d}(x) = \varphi((x-x_0)/d)$ and $\psi_{x_0,d}(x)=
\psi((x-x_0)/d)$.  Notice that $\varphi_{x_0,d}(x)=1$ for $x \in B_d(x_0)$ 
and $\varphi_{x_0,d}(x)=0$ for $x \not\in B_{2d}(x_0)$ and that $\psi_{x_0,d}(x) = 1$ on
${\rm supp}\,\varphi_{x_0,d}$ and $\psi_{x_0,d}(x)=0$ for $x \not\in B_{3d}(x_0)$.
In particular, $\varphi_{x_0,d}\psi_{x_0,d} = \varphi_{x_0,d}$.  \par
Let $\bv \in B^s_{q,1}(\HS)^N$ be a solution of equations:
\begin{equation}\label{st:1}
\eta_0(x_0)\lambda\bv - \alpha\Delta\bv - \beta\nabla\dv\bv
 = \bg\quad\text{in $\HS$}, \quad 
\bv|_{\pd\HS} =0.
\end{equation}
For simplicity, we omit $\HS$ and $N$ for the description of function spaces and their norms like
$B^\nu_{q,1}=B^{\nu}_{q,1}(\HS)^N$ and $\|\cdot\|_{B^\nu_{q,1}} = \|\cdot\|_{B^\nu_{q,1}(\HS)}$
in what follows unless confusion may occur. We define an operator 
$\bT_{x_0}(\lambda)$ acting on $\bg \in B^\nu_{q,1}$ by
$\bv = \bT_{x_0}(\lambda)\bg$.
By \eqref{assump:0}, $\rho_1 < \eta_0(x_0) < \rho_2$, and so 
 by Theorem \ref{thm:kuo} there exist constants
$C$ and $\gamma_{K,1}$ independent of $x_0$ such that 
\begin{equation}\label{est:f1}
\|(\lambda, \lambda^{1/2}\bar\nabla, \bar\nabla^2)\bT_{x_0}(\lambda)\bg\|_{B^\nu_{q,1}}
\leq C\|\bg\|_{B^\nu_{q,1}}
\end{equation}
for every $\lambda \in \Sigma_\mu + \gamma_{K,1}$. 
 Let $A_{x_0} = \eta_0(x_0)
+ \psi_{x_0}(x)(\eta_0(x)-\eta_0(x_0))$.
And then, $\bv$ satisfies the following
equations:
\begin{equation}\label{st:2}
A_{x_0}\lambda\bv - \alpha\Delta\bv - \beta\nabla\dv\bv
 = \bg
+ \bS_{x_0}(\lambda)\bg\quad\text{in $\HS$}, \quad 
\bv|_{\pd\HS} =0.
\end{equation}
Here, we have set 
\begin{align*}
\bS_{x_0}(\lambda)\bg & = 
\psi_{x_0, d}(x)(\eta_0(x)-\eta_0(x_0))\lambda\bv.
\end{align*}

We now estimate $\psi_{x_0,d}(\eta_0(x_0)-\eta_0(x))\lambda\bv$.  
Note that $\eta_0(x)-\eta_0(x_0) = \tilde\eta_0(x)-\tilde\eta_0(x_0)$. 
By Lemma \ref{lem:APH}, we have
\begin{equation}\label{fundest:1}
\|\psi_{x_0,d}(\cdot)(\eta_0(x_0)-\eta_0(\cdot))\lambda\bv(\cdot)\|_{B^{\nu}_{q,1}} \leq 
C\|\psi_{x_0,d}(\cdot)(\tilde\eta_0(x_0)-\tilde\eta_0(\cdot))
\|_{B^{N/q}_{q,1}}\|\lambda\bv\|_{B^\nu_{q,1}}. 
\end{equation}
To estimate $\|\psi_{x_0,d}(\cdot)(\eta_0(x_0)-\eta_0(\cdot))\|_{B^{N/q}_{q,1}}$, 
we use the following lemma  due to Danchin-Tolksdorf \cite[Proposition B.1]{DT22}.
\begin{lem}\label{Prop:B.1} Let $f \in B^{N/q}_{q,1}(\HS)$ for some $1 \leq q \leq \infty$.  
Then, 
$$
\lim_{d\to0}\|\psi_{x_0, d}(\cdot)(f(\cdot)-f(x_0))\|_{B^{N/q}_{q,1}(\HS)}=0\
\quad\text{uniformly with respect to $x_0$}.
$$
\end{lem}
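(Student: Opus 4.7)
The plan is to combine the critical embedding $B^{N/q}_{q,1}(\HS)\hookrightarrow C_b(\overline{\HS})$ --- which makes $f(x_0)$ well defined and tells us $|f(x)-f(x_0)|$ is small near $x_0$ --- with a splitting $f=f_1+f_2$ coming from the density of $C^\infty_c(\overline{\HS})$ in $B^{N/q}_{q,1}(\HS)$ (valid for $1\le q<\infty$; the $q=\infty$ endpoint, not needed in the body of the paper, would require a separate mollification argument). Fixing $\epsilon>0$, I would choose $f_1\in C^\infty_c(\overline{\HS})$ with $\|f_2\|_{B^{N/q}_{q,1}}=\|f-f_1\|_{B^{N/q}_{q,1}}\le\epsilon$.

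For the rough remainder I would invoke the Moser-type paraproduct inequality in the critical algebra $B^{N/q}_{q,1}$,
\[
\|uv\|_{B^{N/q}_{q,1}}\le C\bigl(\|u\|_{L_\infty}\|v\|_{B^{N/q}_{q,1}}+\|v\|_{L_\infty}\|u\|_{B^{N/q}_{q,1}}\bigr),
\]
applied with $u=\psi_{x_0,d}$ and $v=f_2-f_2(x_0)$. By scaling, $\|\psi_{x_0,d}\|_{\dot B^{N/q}_{q,1}}$ is independent of $d$, while $\|\psi_{x_0,d}\|_{L_q}=d^{N/q}\|\psi\|_{L_q}\to 0$, so $\|\psi_{x_0,d}\|_{B^{N/q}_{q,1}}$ is uniformly bounded for $x_0\in\overline{\HS}$ and $d\in(0,1]$; together with $\|\psi_{x_0,d}\|_{L_\infty}\le\|\psi\|_{L_\infty}$ and $|f_2(x_0)|\le C\|f_2\|_{B^{N/q}_{q,1}}\le C\epsilon$, this gives $\|\psi_{x_0,d}(f_2-f_2(x_0))\|_{B^{N/q}_{q,1}}\le C\epsilon$ uniformly in $(x_0,d)$.

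For the smooth part, set $\phi_d=\psi_{x_0,d}(f_1-f_1(x_0))$. Its support lies in $B_{3d}(x_0)\cap\HS$, on which $|f_1-f_1(x_0)|\le 3\|\nabla f_1\|_{L_\infty}d$; combined with $\|\nabla^j\psi_{x_0,d}\|_{L_q}\lesssim d^{N/q-j}$, the Leibniz rule yields $\|\phi_d\|_{L_q}\lesssim d^{N/q+1}$ and $\|\nabla^k\phi_d\|_{L_q}\lesssim d^{N/q-k+1}$ for $k\ge1$. The dyadic Bernstein bounds $\|\Delta_j\phi_d\|_{L_q}\lesssim\min(\|\phi_d\|_{L_q},\,2^{-jK}\|\nabla^K\phi_d\|_{L_q})$ summed with weight $2^{jN/q}$ and split at $2^{j_0}\sim d^{-1}$ then produce $\|\phi_d\|_{B^{N/q}_{q,1}}\le C(f_1)\,d\to 0$ uniformly in $x_0$. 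Letting first $d\to0$ and then $\epsilon\to0$ closes the argument. The main obstacle I anticipate is the uniform control of $\|\psi_{x_0,d}\|_{B^{N/q}_{q,1}(\HS)}$ jointly in $x_0$ and $d$, together with the critical-index product inequality on the half-space: since $N/q$ need not be an integer one must argue via dyadic Bernstein rather than integer Sobolev bookkeeping, and the extension operator from $\HS$ to $\BR^N$ has to be handled uniformly as $x_0$ approaches $\partial\HS$.
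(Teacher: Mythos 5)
Your proposal is correct and rests on the same high-level strategy as the paper, which for this lemma defers to Danchin--Tolksdorf \cite[Prop.~B.1]{DT22} and gives a self-contained proof only of the companion far-field Lemma~\ref{Prop.B2}, stating that it follows ``the same idea'': approximate $f$ by a nice function, beat the smooth part to $O(d)$ by support/Leibniz estimates, and absorb the rough remainder via a $(x_0,d)$-uniform multiplier bound in the critical algebra. Where you diverge is in the implementation of the smooth-part estimate. The paper works in the integer Sobolev scale (density of $W^m_q$ in $B^{N/q}_{q,1}$ for an integer $m>N/q$, Leibniz bounds in $L_q$ and $W^m_q$) and returns to Besov via the interpolation inequality $\|\cdot\|_{B^{N/q}_{q,1}}\le C\|\cdot\|_{L_q}^{1-N/(mq)}\|\cdot\|_{W^m_q}^{N/(mq)}$, which gives exactly the exponent $1$ in $d$; you instead stay in the dyadic Besov scale, bounding $\|\Delta_j\phi_d\|_{L_q}$ by $\min(\|\phi_d\|_{L_q},2^{-jK}\|\nabla^K\phi_d\|_{L_q})$ and splitting at $2^{j_0}\sim d^{-1}$, which also gives $O(d)$. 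The Sobolev route trades dyadic bookkeeping for an auxiliary interpolation step and an integer parameter; your Bernstein computation is self-contained in the Besov scale. Both are legitimate.

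One imprecision should be repaired before the argument is airtight: the Moser-type product estimate cannot be invoked with $v=f_2-f_2(x_0)$, since the additive constant $-f_2(x_0)$ is not an element of $B^{N/q}_{q,1}(\HS)$ on the unbounded half-space. The fix is immediate: expand $\psi_{x_0,d}(f_2-f_2(x_0))=\psi_{x_0,d}f_2-f_2(x_0)\psi_{x_0,d}$ and estimate separately. The first term obeys the critical product estimate with $u=\psi_{x_0,d}$, $v=f_2$, and the uniform bound $\sup_{x_0,d}\|\psi_{x_0,d}\|_{B^{N/q}_{q,1}(\HS)}<\infty$ that you deduce from scale invariance of $\dot B^{N/q}_{q,1}$ together with $\|\psi_{x_0,d}\|_{L_q}\to0$; the second is $|f_2(x_0)|\,\|\psi_{x_0,d}\|_{B^{N/q}_{q,1}}\lesssim\epsilon$ by the embedding $B^{N/q}_{q,1}\hookrightarrow L_\infty$. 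With that adjustment, and with the extension/restriction issue you flag handled via $\|\psi_{x_0,d}\|_{B^{N/q}_{q,1}(\HS)}\le\|\psi_{x_0,d}\|_{B^{N/q}_{q,1}(\BR^N)}$, the proof is sound for $1\le q<\infty$, which is all the paper uses.
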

By Lemma \ref{Prop:B.1}, for any $\delta > 0$ there exists a $d>0$ such that 
\begin{equation}\label{small:0.1}
\|\psi_{x_0,d}(\cdot)(\eta_0(x_0)-\eta_0(\cdot))\|_{B^{N/q}_{q,1}} \leq \delta
\end{equation}
Notice that the choice of distance $d$ is independent of $x_0$. 
From \eqref{fundest:1} and \eqref{small:0.1}, it follows that 
\begin{equation}\label{est:2}\begin{aligned}
\|\bS_{x_0}(\lambda)\bg\|_{B^\nu_{q,1}}
\leq C\delta\|\lambda\bv\|_{B^\nu_{q,1}}.
\end{aligned}\end{equation}
Choosing $d>0$ so small that $C\delta \leq 1/2$, 
we have $\|\bS_{x_0}\|_{\CL(B^\nu_{q,1})} \leq 1/2$.   
Thus, 
the inverse $(\bI + \bS_{x_0}(\lambda))^{-1}$ of the operator $\bI + \bS_{x_0}(\lambda)$
exists and $\|\bI + \bS_{x_0}(\lambda))^{-1}\|_{\CL(B^\nu_{q,1})} \leq 2$, 
where $\bI$ is the identity operator on $ B^\nu_{q,1}$.
Recalling the operator $\bT_{x_0}(\lambda)$ is defined 
by $\bv = \bT_{x_0}(\lambda)\bg$, 
and setting $\bw_{x_0} = \bT_{x_0}(\lambda)(\bI + \bS_{x_0}(\lambda))^{-1}\bg$, 
by \eqref{est:f1} we see that 
$\bw_{x_0}$ satisfies equations:
\begin{equation}\label{st:3*}
A_{x_0}\lambda\bw_{x_0} - \alpha\Delta\bw_{x_0} - \beta\nabla\dv\bw_{x_0}
 = \bg\quad\text{in $\HS$}, \quad 
\bw_{x_0}|_{\pd\HS} =0, \end{equation}
as well as the estimate
\begin{equation}\label{est:m1}
\|(\lambda, \lambda^{1/2}\bar\nabla, \bar\nabla^2)\bw_{x_0}\|_{B^\nu_{q,1}}
\leq C\|(\bI+\bS_{x_0}(\lambda))^{-1}\bg\|_{B^\nu_{q,1}}
\leq 2C\|\bg\|_{B^\nu_{q,1}}
\end{equation}
for every $\lambda \in \Sigma_\mu + \gamma_{K,1}$, where $C$ is independent of 
$d$.\par 
Finally,  we set $\bv_{x_0} = \varphi_{x_0,d}\bw_{x_0}$.  Since 
$\psi_{x_0,d}\varphi_{x_0,d} = \varphi_{x_0,d}$, we have 
$A_{x_0}\varphi_{x_0,d} = \eta_0(x)\varphi_{x_0,d}$.
From \eqref{st:3*} it follows that
\begin{equation}\label{st:4*}
\eta_0(x)\lambda\bv_{x_0} - \alpha\Delta\bv_{x_0} - \beta\nabla\dv\bv_{x_0}
 = \varphi_{x_0}\bg
+ \bU_{x_0}(\lambda)\bg\quad\text{in $\HS$}, \quad 
\bv_{x_0}|_{\pd\HS} =0, 
\end{equation}
where we have set
\begin{align*}
\bU_{x_0}(\lambda)\bg & = -\alpha((\Delta\varphi_{x_0,d})\bw_{x_0} + 2(\nabla\varphi_{x_0,d})\nabla\bw_{x_0})
-\beta(\nabla((\nabla\varphi_{x_0,d})\cdot\bw_{x_0}) + (\nabla\varphi_{x_0,d})\dv\bw_{x_0}).
\end{align*}
From \eqref{est:m1}, we see that 
\begin{equation}\label{pert:2}\begin{aligned}
\|(\lambda, \lambda^{1/2}\bar\nabla, \bar\nabla^2)\bv_{x_0}\|_{B^\nu_{q,1}}
\leq C_d\|\bg\|_{B^\nu_{q,1}},
\end{aligned}\end{equation}
as well as 
\begin{equation}\label{remainder:1}
\|\bU_{x_0}(\lambda)\bg\|_{B^\nu_{q,1}} \leq Cd^{-2}|\lambda|^{-1/2}\|\bg\|_{B^\nu_{q,1}}
\end{equation}
for every $\lambda \in \Sigma_\mu + \gamma_{K,1}$ and $0<d<1$. 
Here,  $C$ is a constant independent of $x_0$ and $d \in (0, 1)$.

\par
Next, we pick up $x_1 \in \HS$ and we choose 
$d_1>0$ such that $B_{3d_1}(x_1) \subset \HS$.
Let $\tilde \bg$ be a suitable extension of  $\bg$ to $\BR^N$ such that
$\tilde\bg|_{\HS} = \bg$ and $\|\tilde \bg\|_{B^\nu_{q,1}(\BR^N)} \leq C\|\bg\|_{B^\nu_{q,1}}$. 
Let $\varphi_{x_1, d_1}(x) = \varphi((x-x_1)/d_1)$ and $\psi_{x_1, d_1}(x) = \psi((x-x_1)/d)$.
 Analogously  to \eqref{pert:2} and 
\eqref{remainder:1}, if we choose $d_1>0$ small enough, there exist a  
$\bw_{x_1} \in B^{s+2}_{q,1}(\WS)^N$ satisfying equations
\begin{equation}\label{st:4*}
A_{x_1}\lambda\bw_{x_1} - \alpha\Delta\bw_{x_1} - \beta\nabla\dv\bw_{x_1}
= \tilde \bg\quad\text{in $\BR^N$},
\end{equation}
where $A_{x_1} = \eta_0(x_1) + \psi_{x_1, d_1}(\eta_0(x)
-\eta_0(x_1))$, and the estimate:
\begin{equation}\label{est:3*}
\|(\lambda, \lambda^{1/2}\bar\nabla, \bar\nabla^2)\bw_{x_1}\|_{B^\nu_{q,1}(\BR^N)}
\leq C\|\bg\|_{B^\nu_{q,1}}
\end{equation}
for any $\lambda \in \Sigma_\mu + \gamma_{K,1}$.  Let $\bv_{x_1}=
\varphi_{x_1}\bw_{x_1}$ and then $\bv_{x_1}$ satisfies equations:
\begin{equation}\label{st:4}
\eta_0(x)\lambda\bv_{x_1} - \alpha\Delta\bv_{x_1} - \beta\nabla\dv\bv_{x_1}
= \varphi_{x_1}\bg
+ \bU_{x_1}(\lambda)\bg\quad\text{in $\HS$}, \quad 
\bv_{x_1}|_{\pd\HS} =0, 
\end{equation}
where we have set
\begin{align*}
\bU_{x_1}(\lambda)\bg & = -\alpha((\Delta\varphi_{x_1, d_1})\bw_{x_1} + 2(\nabla\varphi_{x_1, d_1})\nabla\bw_{x_1})
-\beta(\nabla((\nabla\varphi_{x_1, d_1})\cdot\bw_{x_1}) + (\nabla\varphi_{x_1, d_1})\dv\bw_{x_1}).
\end{align*}
Moreover, by \eqref{est:3*}, we have 
\begin{gather}\label{pert:3}
\|(\lambda, \lambda^{1/2}\bar\nabla, \bar\nabla^2)\bv_{x_1}\|_{B^\nu_{q,1}}
\leq Cd_1^{-2}\|\bg\|_{B^\nu_{q,1}}, \\
\|\bU_{x_1}\bg\|_{B^\nu_{q,1}}  \leq Cd_1^{-2}|\lambda|^{-1/2}\|\bg\|_{B^\nu_{q,1}}
\label{remainder:2}
\end{gather}
for every $\lambda \in \Sigma_\mu + \gamma_{K,1}$ and $d_1 \in (0, 1)$, 
where $C$ is a constant independent of $x_1$ and $d_1 \in (0, 1)$.  

%%%%%%%%%%%%5

Finally, we consider  the far field case. 
Let $\tilde\psi \in C^\infty(\BR)$ which equals to $1$ for 
$|x| \geq2$ and  $0$ for $|x| \leq 1$, and set $\psi_R(x) = \tilde\psi(x/R)$. 
Let 
 $\bv$ be a  solution of equations 
\begin{equation}\label{s:3*}
\rho_*\lambda\bv - \alpha\Delta \bv  -\beta\nabla\dv\bv 
 = \bg \quad\text{in $\HS$}, \quad 
\bv|_{\pd\HS} =0
\end{equation}
for any $\lambda \in \Sigma_\mu + \gamma_{K,1}$.  Here, notice that $\rho_0 < \rho_* < \rho_2$. 
We define an operator $\bT_R(\lambda)$ by $\bv
= \bT_R(\lambda)\bg$. By Theorem \ref{thm:kuo}, we have
\begin{equation}\label{est:2.2}
\|(\lambda, \lambda^{1/2}\bar\nabla, \bar\nabla^2)\bT_R(\lambda)\bg\|_{B^\nu_{q,1}}
\leq C\|\bg\|_{ B^\nu_{q,1}}.
\end{equation}
 Set 
$A_R= \rho_* + \psi_R(x)(\eta_0(x)-\rho_*) =\rho_* + \psi_R(x)\tilde\eta_0(x)$.
By \eqref{s:3*}, we have
\begin{equation}\label{s:4*}
A_R\lambda\bv - \alpha\Delta \bv  -\beta\nabla\dv\bv 
 = \bg + \bS_{R}(\lambda)\bg\quad\text{in $\HS$}, \quad
\bv|_{\pd\HS} =0,
\end{equation}
where we have set
\begin{align*}
\bS_{R}(\lambda)\bg & = \psi_R(x)\tilde\eta_0(x)\lambda\bv.
\end{align*}
By Lemma \ref{lem:APH}, 
we have
\begin{equation}\label{fundest:2}
\|\bS_R(\lambda)\bg\|_{B^\nu_{q,1}}
\leq C\|\psi_R\tilde\eta_0\|_{B^{N/q}_{q,1}}
\|\lambda\bv\|_{B^\nu_{q,1}}.
\end{equation}
%%%%%%%%%%%%%%%%%
For any $\delta >0$ there exists an $R$ such that 
\begin{equation}\label{small:2}
\|\psi_R\tilde\eta_0\|_{B^{N/q}_{q,1}}  \leq \delta.
\end{equation}
This fact follows from the following lemma, the idea of whose proof is completely the same 
as in the proof of \cite[Proposition B.1]{DT22}.
\begin{lem}\label{Prop.B2} Let $f \in B^{N/q}_{q,1}$ for some $1 \leq q \leq \infty$.  Then,
for any $\delta > 0$, there exists an $R > 1$ such that 
$$\|\psi_R f\|_{B^{N/q}_{q,1}}  < \delta.$$
\end{lem}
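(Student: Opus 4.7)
The plan is to mimic the strategy of the proof of Proposition B.1 in \cite{DT22}: combine density of $C^\infty_0(\HS)$ in $B^{N/q}_{q,1}(\HS)$ (valid for $1 \leq q < \infty$) with a uniform-in-$R$ multiplier estimate
\begin{equation*}
\|\psi_R h\|_{B^{N/q}_{q,1}(\HS)} \leq C_0\, \|h\|_{B^{N/q}_{q,1}(\HS)}, \qquad R \geq 1,
\end{equation*}
where $C_0$ is independent of $R$ and $h \in B^{N/q}_{q,1}(\HS)$.

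To establish the uniform multiplier bound, I would exploit the uniform smoothness of the family $\{\psi_R\}_{R \geq 1}$: for every multi-index $\alpha$,
\begin{equation*}
\|\pd^\alpha \psi_R\|_{L_\infty} = R^{-|\alpha|}\|\pd^\alpha\tilde\psi\|_{L_\infty} \leq \|\pd^\alpha \tilde\psi\|_{L_\infty} \quad\text{for all $R \geq 1$,}
\end{equation*}
and for $|\alpha| > N/q$ we also have $\|\pd^\alpha \psi_R\|_{L_q} \leq \|\pd^\alpha\tilde\psi\|_{L_q}$, since $\|\pd^\alpha \psi_R\|_{L_q}^q = R^{N - q|\alpha|}\|\pd^\alpha\tilde\psi\|_{L_q}^q$ with $N - q|\alpha| \leq 0$. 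A Bony paraproduct decomposition $\psi_R h = T_{\psi_R} h + T_h \psi_R + R(\psi_R, h)$ then delivers the bound: the low-frequency paraproduct $T_{\psi_R} h$ is controlled by $\|\psi_R\|_{L_\infty} \leq 1$, while the other two pieces are handled by the rapid decay $\|\Delta_j \psi_R\|_{L_q} \lesssim 2^{-jk}$ (uniform in $R \geq 1$ for arbitrarily large $k$) combined with the embedding $B^{N/q}_{q,1}(\HS) \hookrightarrow L_\infty(\HS)$ on the $h$-factor. Alternatively, one may invoke a general Besov multiplier theorem for smooth multipliers with uniformly bounded $C^k$-norms.

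With the uniform multiplier bound in hand, the rest of the argument is routine. Given $\delta > 0$, by density choose $g \in C^\infty_0(\HS)$ with $\|f - g\|_{B^{N/q}_{q,1}(\HS)} < \delta/(2C_0)$, and let $R_0 > 0$ be such that $\supp g \subset B_{R_0}$. Since $\psi_R \equiv 0$ on $\{|x| \leq R\}$, we have $\psi_R g \equiv 0$ for every $R > R_0$. Hence
\begin{equation*}
\|\psi_R f\|_{B^{N/q}_{q,1}(\HS)} = \|\psi_R(f - g)\|_{B^{N/q}_{q,1}(\HS)} \leq C_0\, \|f - g\|_{B^{N/q}_{q,1}(\HS)} < \delta/2 < \delta,
\end{equation*}
which is the claim.

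The main obstacle is precisely the uniform multiplier estimate at the endpoint $\nu = N/q$. Lemma \ref{lem:APH} requires a strict inequality $|\nu| < N/q$ (or $-N/q' < \nu < N/q$) and therefore cannot be applied at $\nu = N/q$. One must instead invoke either a general Besov multiplier theorem or the direct paraproduct computation sketched above. Both routes crucially exploit the fact that $\psi_R$ is $C^\infty$ with uniform-in-$R$ bounds on all of its derivatives, so that the endpoint loss is absorbed by the rapid decay of the high-frequency blocks of $\psi_R$.
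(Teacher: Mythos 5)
Your approach is structurally parallel to the paper's (approximate $f$ by a nice function $g$, control $\psi_R g$ directly, control the error $\psi_R(f-g)$ by a uniform-in-$R$ multiplier bound), but there is a genuine gap in the density claim on which you rely. You invoke density of $C^\infty_0(\HS)$ in $B^{N/q}_{q,1}(\HS)$; however, as the paper itself notes in the remark after Theorem \ref{thm:kuo} (citing Muramatsu and Triebel), $C^\infty_0(\HS)$ is dense in $B^\nu_{q,1}(\HS)$ only when $-1+1/q < \nu < 1/q$. Here $\nu = N/q \geq 2/q > 1/q$ for $N \geq 2$, so the density fails: the closure of $C^\infty_0(\HS)$ in $B^{N/q}_{q,1}(\HS)$ is a proper subspace (functions with vanishing trace on $\pd\HS$). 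Your argument would then produce the estimate only for $f$ in that proper closure, which is insufficient for the application in the paper where $\tilde\eta_0$ need not vanish on the boundary. The fix is simple and you should state it explicitly: replace $C^\infty_0(\HS)$ by restrictions to $\HS$ of $C^\infty_0(\BR^N)$ (equivalently, compactly supported functions that are smooth up to $\pd\HS$); that space \emph{is} dense in $B^s_{q,1}(\HS)$ for all $s$, and your argument only uses compact support of $g$, never that $g$ vanishes near $\pd\HS$.

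Concerning the comparison with the paper's own proof: the paper densifies by $W^m_q(\HS)$ with an integer $m > N/q$ rather than by compactly supported smooth functions, and then controls $\|\psi_R g\|_{B^{N/q}_{q,1}}$ not by disjoint supports (a $W^m_q$ function need not be compactly supported) but via the interpolation inequality
\[
\|\psi_R g\|_{B^{N/q}_{q,1}(\HS)} \leq C \|\psi_R g\|_{L_q(\HS)}^{1-\frac{N}{mq}}\,\|\psi_R g\|_{W^m_q(\HS)}^{\frac{N}{mq}},
\]
together with the uniform-in-$R$ elementary bound $\|\psi_R g\|_{W^m_q} \leq C_m \|g\|_{W^m_q(B_R^c)}$ and the tail smallness of $g \in W^m_q$. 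Your route, once the approximating class is corrected, is somewhat cleaner because the main term vanishes outright. For the error term $\psi_R(f-g)$ both proofs invoke a uniform multiplier estimate $\|\psi_R h\|_{B^{N/q}_{q,1}} \leq C_0 \|h\|_{B^{N/q}_{q,1}}$ at the endpoint regularity $\nu = N/q$; the paper asserts this with a constant ``independent of $R$'' without proof, while your paraproduct sketch (using $\|\Delta_j\psi_R\|_{L_q} \lesssim 2^{-jk}$ uniformly in $R \geq 1$ for $k > N/q$, together with $B^{N/q}_{q,1} \hookrightarrow L_\infty$) is a correct way to establish it. So your treatment of the multiplier bound is, if anything, more complete than the paper's; the only real defect is the density of the wrong test-function class.
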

\begin{proof} 
Let $m$ be an integer such that $N/q < m$.  Notice that $W^m_q(\HS)$ is dense in
$B^{N/q}_{q,1}(\HS)$.  Thus, first we assume that $f \in W^m_q$.  Then, 
$\|f\|_{W^m_q} < \infty$ and $\|f\|_{L_q} < \infty$, which implies that 
for any $\delta>0$, there exists an $R > 0$ such that $\|f\|_{W^m_q(B_R^c)} < \delta$ and 
$\|f\|_{L_q(B_R^c)}  < \delta$.  Here, $B_R^c = \{x \in \BR^N \mid |x| \geq R\}$.  Thus, 
$\|\psi_Rf\|_{W^m_q} < \delta$ and $\|\psi_Rf\|_{L_q} < \delta$.  In fact, 
$$\|\psi_Rf\|_{W^m_q} \leq C_m\sum_{|\beta|\leq m}R^{-(m-|\beta|)}\|D^\beta f\|_{L_q(B_R^c)}
\leq C_m\|f\|_{W^m_q(B_R^c)}
$$
for any $R \geq 1$ with some constant $C_m$ depending only on $m$ and 
$D^\alpha\tilde\psi$ ($|\alpha| \leq m$). Thus, 
choosing $R>0$ larger if necessary, we have $\|\psi_Rf\|_{W^m_q} < \delta.$ \par

Since $\|\psi_R f\|_{B^{N/q}_{q,1}(\HS)} \leq C\|\psi_R f\|_{L_q}^{1-\frac{N}{mq}}\|\psi_R f
\|_{W^m_q}^{\frac{N}{mq}}$ with some constant $C$ independent of $R$ and $f$, we have
$$\|\psi_R f\|_{B^{N/q}_{q,1}} \leq C\delta.$$
If we choose $R \geq 1$ larger, we have
$$\|\psi_R f\|_{B^{N/q}_{q,1}} \leq \delta/2.$$
Now, in the case where $f \in B^{N/q}_{q,1}$, we choose $g \in W^m_{q,1}$ such that 
$$\|\psi_R(g-f) \|_{B^{N/q}_{q,1}} < C\|g-f\|_{B^{N/q}_{q,1}} <\delta/2.
$$
Here, $C$ is a constant indepenent of $R$.  Thus, choosing $R>0$ in such a way that 
$\|\psi_Rg\|_{B^{N/q}_{q,1}} < \delta/2$,  we have
$$\|\psi_R f\|_{B^{N/q}_{q,1}} \leq \|\psi_R(f-g)\|_{B^{N/q}_{q,1}} 
+ \|\psi_Rg\|_{B^{N/q}_{q,1}}
< \delta.$$
This completes the proof of Lemma \ref{Prop.B2}.

\end{proof}
%%%%%%%%%%%%%%%%%%%%%%

Combining \eqref{fundest:2} and \eqref{small:2} implies 
\begin{equation}\label{small:2.3}
\|\bS_R(\lambda)\bg\|_{B^\nu_{q,1}} \leq C\delta\|\bg\|_{B^\nu_{q,1}}.
\end{equation}
Choosing $\delta>0$ in such a way that  $C\delta \leq 1/2$, we have 
$\|\bS_R(\lambda)\|_{\CL(B^\nu_{q,1})} \leq 1/2$, and so 
the inverse operator $(\bI+\bS_R(\lambda))^{-1}$ exists
and $\|(\bI + \bS_R(\lambda))^{-1}\|_{\CL(B^\nu_{q,1})} \leq 2$
for every $\lambda \in \Sigma_\mu + \gamma_{K,1}$.  
Thus, by \eqref{est:2.2} and \eqref{s:4*},  
$\bw_R = \bT_R(\lambda)(\bI + \bS_R(\lambda))^{-1}\bg 
\in B^\nu_{q,1}$ satisfies equations
\begin{equation}\label{s:5*}
A_R\lambda\bw_R- \alpha\Delta \bw_R  -\beta\nabla\dv\bw_R 
 = \bg  \quad \text{in $\HS$}, \quad 
\bw_R|_{\pd\HS} =0, 
\end{equation}
as well as the estimate: 
\begin{equation}\label{est:2.3}
\|(\lambda, \lambda^{1/2}\bar\nabla, \bar\nabla^2)\bw_R\|_{B^\nu_{q,1}}
\leq C\|(\bI+\bS_R)^{-1}\bg\|_{B^\nu_{q,1}}
\leq 2C\|\bg\|_{B^\nu_{q,1}}.
\end{equation}

Let $\tilde\varphi \in C^\infty(\HS)$ such that $\tilde\varphi(x) =1$
for $|x| \geq 3$ and $0$ for $|x| \leq 2$ and set $\varphi_R = \tilde\varphi(x/R)$. 
We have $\psi_R \varphi_R = \varphi_R$, and so setting
$\bv_R = \varphi_R\bw_R \in B^\nu_{q,1}(\HS)$, we see that $A_R\varphi_R\lambda\bv_R
 = \eta_0(x)\lambda\bv_R$.  Thus, 
by \eqref{s:5*} and \eqref{est:2.3}, we see that 
$\bv_R$ satisfies  the equations: 
\begin{equation}\label{s:6*}
\eta_0(x)\lambda\bv_R- \alpha\Delta \bv_R  -\beta\nabla\dv\bv_R 
 = \varphi_R\bg + \bU_{R}(\lambda)\bg
\quad\text{in $\HS$}, \quad 
\bv_R|_{\pd\HS} =0,
\end{equation}
as well as the estimate: 
\begin{equation}\label{est:2.4}
\|(\lambda, \lambda^{1/2}\bar\nabla, \bar\nabla^2)\bv_R\|_{B^\nu_{q,1}}
\leq C\|\bg\|_{B^\nu_{q,1}}
\end{equation}
for any $\lambda \in \Sigma_\mu + \gamma_{K,1}$.  Here,  we have set
\begin{align*}
\bU_{R}(\lambda)\bg & = -\alpha((\Delta\varphi_R(x))\bw_R
+2 (\nabla\varphi_R(x))\nabla\bw_R) - \beta(\nabla((\nabla\varphi_R(x))\cdot\bw_R)
+ (\nabla\varphi_R(x))\dv\bw_R).
\end{align*}
By \eqref{est:2.3}, we have 
\begin{equation}\label{remainder:3}
\|\bU_{R}(\lambda)\bg
\|_{B^{\nu}_{q,1}} \leq C|\lambda|^{-1/2}\|\bg\|_{B^\nu_{q,1}}.
\end{equation}
\par
Choose points  $x^0_j \in \pd\HS$ ($j=1, \ldots, L_0$), and $x^1_j \in 
\HS$ ($j=1, \ldots, L_1$) and diameters $d > d_1$ suitably such that 
$$\overline{\HS} \subset B^c_R \cup \bigcup_{j=1}^{L_0}
B_d(x^0_j) \cup \bigcup_{j=1}^{L_1} B_{d_1}(x^1_j).
$$ 
Let $\psi^0_0(x) = \psi_R(x)$, $\psi^0_j(x) = \varphi((x-x^0_j)/d)$, and
 $\psi^1_j(x) = \varphi((x-x^1_j)/d_1)$, and set
$$\Psi(x) = \psi^0_0(x) + \sum_{i=0}^1\sum_{j=1}^{L_i} \psi^i_j(x).$$
We see that $\Psi(x) \geq 1$ for every $x \in \overline{\HS}$ and $\Psi \in C^\infty(\overline{\HS})$.
 Set 
$$\varphi^0_0(x) = \psi^0_0(x)/\Psi(x), \quad
\varphi^i_j(x) = \psi^i_j(x)/\Psi(x).$$
Obviously, $\varphi^0_j \in C^\infty_0(B_{2d}(x^0_j))$, 
$\varphi^1_j \in C^\infty_0(B_{2d_1}(x^1_j))$,  
$\varphi^0_0(x) = 0$ for 
$|x| \leq 2R$, and 
$$\varphi^0_0(x) + \sum_{i=0}^1\sum_{j=1}^{L_i}
\varphi^i_j(x) = 1\quad\text{for $x \in \overline{\HS}$}.
$$
Let $\bv^i_j = \bv_{x^i_j} = \varphi^i_j\bw_{x^i_j}$, 
and $\bv^0_0 = \bv_R = \varphi^0_0\bw_R$. 
Set $\bv = \bv^0_0 + \sum_{i=0}^1\sum_{j=1}^{L_i}\bv^i_j$, and then 
\begin{equation}\label{eq:t.1}
\eta_0(x)\lambda\bv -\alpha\Delta\bv-\beta\nabla\dv\bv
  = \bg
+ \bU(\lambda)\bg\quad\text{in $\HS$}, \quad 
\bv|_{\pd\HS}=0.
\end{equation}
Here, we have set 
\begin{align*}
\bU(\lambda)\bg &= -\alpha((\Delta\varphi^0_0)\bw_R +2 (\nabla\varphi^0_0)\nabla
\bw_R) -\beta(\nabla((\nabla\varphi^0_0)\cdot\bw_R) + (\nabla\varphi^0_0)\dv\bw_R )\\
& -\sum_{i=0}^1\sum_{j=1}^{L_i}\{\alpha((\Delta\nabla\varphi^i_j)\bw_{x^i_j} + 
2(\nabla\varphi^i_j)\nabla\bw_{x^i_j}) 
+\beta(\nabla((\nabla\varphi^i_j)\cdot\bw_{x^i_j})+(\nabla\varphi^i_j)\dv\bw_{x^i_j}  )\}.
\end{align*}
By \eqref{pert:2}, \eqref{pert:3}, and \eqref{est:2.4}, we have
\begin{equation}\label{main:est.1}
\|(\lambda, \lambda^{1/2}\bar\nabla, \bar\nabla^2)\bv\|_{B^\nu_{q,1}}
\leq C\|\bg\|_{B^{\nu}_{q,1}}.
\end{equation}
By \eqref{remainder:1}, \eqref{remainder:2}, and \eqref{remainder:3}, 
we have
\begin{equation}\label{remain:1}
\|\bU(\lambda)\bg\|_{B^\nu_{q,1}}
\leq C|\lambda|^{-1/2}
\|\bg\|_{B^\nu_{q,1}}
\end{equation}
for any $\lambda \in \Sigma_\mu + \gamma_{K,1}$. 
Choosing $\gamma_1\geq \gamma_{K,1}$ so large that $C((\sin\mu)\gamma_1)^{-1/2} \leq 1/2$, we see that 
for any $\lambda \in \Sigma_\mu + \gamma_1$ 
 $(\bI+\bU(\lambda))^{-1}$ exists and 
$\|(\bI+\bU(\lambda))^{-1}\|_{\CL(B^\nu_{q,1})} \leq 2$. 
If we define an operator $\bT(\lambda)$ by
$\bT(\lambda)\bg = \bv$,  by \eqref{eq:t.1}
 $\bv = \bT(\lambda)(\bI+\bU(\lambda))^{-1}\bg$ satisfies
equations:
\begin{equation}\label{eq:t.2}
\eta_0(x)\lambda\bv -\alpha\Delta\bv-\beta\nabla\dv\bv
  = \bg\quad\text{in $\HS$}, \quad 
\bv|_{\pd\HS}=0.
\end{equation}
Moreover, by \eqref{main:est.1}, we have 
$$\|(\lambda, \lambda^{1/2}\bar\nabla, \bar\nabla^2)\bT(\lambda)(\bI+\bU(\lambda))^{-1}\bg\|_{B^\nu_{q,1}}
\leq C\|(\bI+\bU(\lambda))^{-1}\bg\|_{B^\nu_{q,1}}
\leq 2C\|\bg\|_{B^\nu_{q,1}}
$$
for any $\lambda \in \Sigma_\mu + \gamma_1$. 
 This completes the proof of \eqref{est:1.1}. \par
 Since $C^\infty_0(\HS)$ is dense in $B^\nu_{q,1}(\HS)$ whenever $-1+1/q < \nu < 1/q$.
 Thus, we may assume that $\bg \in C^\infty_0(\HS)$ below. 
 Problem \eqref{lame:2}
 admits a unique solution $\bv \in B^{\nu}_{q,1}(\HS)$
 satisfying the estimates:
 $$\|(\lambda, \lambda^{1/2}\bar\nabla, \bar\nabla^2)\bv\|_{B^\nu_{q,1}(\HS)}
 \leq C\|\bg\|_{B^\nu_{q,1}(\HS)}$$
 for $\nu=s$ and $\nu=s+\sigma$. This shows the first inequality in
 \eqref{est:1.1}. \par
  We now prove \eqref{fundest.2} and \eqref{fundest.3}. 
Since $C^\infty_0(\HS)$ is dense in $B^\nu_{q,1}(\HS)$, 
we may assume that $\bg \in C^\infty_0(\HS)^N$.
Notice that $\eta_0=\rho_*+\tilde\eta_0$.
 Applying \eqref{fundest.2} to 
 $$\rho_*\lambda \bv - \alpha\Delta\bv -\beta\nabla\dv\bv = \bg -\tilde\eta_0 \lambda\bv$$
 gives that 
 \begin{align*}
 \|(\lambda, \lambda^{1/2}\bar\nabla, \bar\nabla^2)\bv\|_{B^s_{q,1}}
 \leq C|\lambda|^{-\frac{\sigma}{2}}(\|\bg\|_{B^{s+\sigma}_{q,1}} 
 + \|\tilde\eta_0\lambda\bv\|_{B^{s+\sigma}_{q,1}}).
\end{align*}
By Lemma \ref{lem:APH}, we have 
$$\|\tilde\eta_0\lambda\bv\|_{B^{s+\sigma}_{q,1}} \leq C\|\tilde\eta_0\|_{B^{N/q}_{q,1}}
\|\lambda\bv\|_{B^{s+\sigma}_{q,1}} 
\leq C\|\tilde\eta_0\|_{B^{N/q}_{q,1}}\|\bg\|_{B^{s+\sigma}_{q,1}}.$$
Thus, we have \eqref{est:1.1*}. \par
 Then problem \eqref{lame:2} admits a unique solution 
$\bv \in B^{s+2}_{q,1}\cap B^{s+2-\sigma}_{q,1}$ satisfying the estimates:
\begin{align*}
 \|(\lambda, \lambda^{1/2}\bar\nabla, \bar\nabla^2)\bv\|_{B^\nu_{q,1}}
 \leq C\|\bg\|_{B^\nu_{q,1}}
\end{align*}
for $\nu=s$ and $\nu=s-\sigma$.  Differentiating equations \eqref{lame:2} with respect to 
$\lambda$ yields 
\begin{equation}\label{diflame:1}
\eta_0(x)\lambda \pd_\lambda\bv-\alpha\Delta \pd_\lambda \bv - \beta\nabla\dv\pd_\lambda\bv
= -\eta_0(x)\bv.
\end{equation}
Thus, by the first inequality in \eqref{est:1.1} and Lemma \ref{lem:APH},  we have
\begin{align*}
\|(\lambda, \lambda^{1/2}\bar\nabla, \bar\nabla^2)\pd_\lambda\bv\|_{B^\nu_{q,1}}
&\leq C\|\eta_0\bv\|_{B^\nu_{q,1}} \leq C(\rho_*+\|\tilde\eta_0\|_{B^{N/q}_{q,1}})\|\bv\|_{B^\nu_{q,1}}\\
&\leq C|\lambda|^{-1}(\rho_*+\|\tilde\eta_0\|_{B^{N/q}_{q,1}})\|\bg\|_{B^\nu_{q,1}}.
\end{align*}
This shows the second inequality in \eqref{est:1.1}.
Applying  \eqref{fundest.3*} to
$$\rho_*\lambda \bv - \alpha\Delta\bv -\beta\nabla\dv\bv = \bg -\tilde\eta_0 \lambda\bv$$
 gives that 
 \begin{align*}
 \|\bv\|_{B^s_{q,1}(\HS))}
 &\leq C|\lambda|^{-(1-\frac{\sigma}{2})}(\|\bg\|_{B^{s-\sigma}_{q,1}(\HS)} 
 + \|\tilde\eta_0\lambda\bv\|_{B^{s-\sigma}_{q,1}(\HS)}).
\end{align*}
By Lemma \ref{lem:APH}, we have
$$\|\tilde\eta_0\lambda\bv\|_{B^{s-\sigma}_{q,1}(\HS)} 
\leq C\|\tilde\eta_0\|_{B^{N/q}_{q,1}(\HS)}\|\lambda \bv\|_{B^{s-\sigma}_{q,1}(\HS)}
\leq C\|\tilde\eta_0\|_{B^{N/q}_{q,1}(\HS)}\|\bg\|_{B^{s-\sigma}_{q,1}(\HS)}.$$
Thus, we have 
\begin{equation}\label{diffest.1}
\|\bv\|_{B^s_{q,1}(\HS)} \leq C(1+\|\tilde\eta_0\|_{B^{N/q}_{q,1}(\HS)})
|\lambda|^{-(1-\frac{\sigma}{2})}\|\bg\|_{B^{s-\sigma}_{q,1}(\HS)}.
\end{equation}
To estimate $\pd_\lambda \bv$, we differentiate equations \eqref{lame:2} with respect to $\lambda$,
and then we have
$$\eta_0(x)\lambda\pd_\lambda\bv-\alpha\Delta\pd_\lambda\bv- \beta\nabla\dv\pd_\lambda\bv
= - \eta_0(x)\bv\quad\text{in $\HS$}, \quad \pd_\lambda\bv|_{\pd\HS}=0.$$
Thus, applying the estimate \eqref{est:1.1} gives that 
$$\|(\lambda, \lambda^{1/2}\bar\nabla, \bar\nabla^2)\pd_\lambda\bv\|_{B^s_{q,1}(\HS)}
\leq C\|\eta_0\bv\|_{B^s_{q,1}(\HS)}.$$
By  \eqref{diffest.1},   we have
$$\|\eta_0\bv\|_{B^s_{q,1}(\HS)} \leq (\rho_*+C\|\tilde\eta_0\|_{B^{N/q}_{q,1}(\HS)})
\|\bv\|_{B^s_{q,1}(\HS)} 
\leq C(\rho_* +\|\tilde\eta_0\|_{B^{N/q}_{q,1}(\HS)})
|\lambda|^{-(1-\frac{\sigma}{2})}\|\bg\|_{B^{s-\sigma}_{q,1}(\HS)}, $$
which implies that 
$$\|(\lambda, \lambda^{1/2}\bar\nabla, \bar\nabla^2)\pd_\lambda\bv\|_{B^s_{q,1}(\HS)}
\leq C|\lambda|^{-(1-\frac{\sigma}{2})}\|\bg\|_{B^{s-\sigma}_{q,1}(\HS)}.
$$
This completes the proof of Theorem \ref{thm:4.0}
\qed

%%%%%%%%%%%%%%%
Now, we consider  problem \eqref{s:2} of the Stokes system
and prove Theorem \ref{thm:3}.  
 We insert the relation: 
 $\rho = \lambda^{-1}(f-\eta_0\dv\bv)$ obtained 
from the first equation in \eqref{s:2} into the second equations.  Then, we have
\begin{equation}\label{SL:1}
\eta_0(x)\lambda\bv - \alpha\Delta \bv - \beta\nabla\dv\bv -
\lambda^{-1}\nabla(P'(\eta_0)\eta_0\dv\bv) = \bh
\quad\text{in $\HS$}, \quad \bu|_{\pd\HS} =0,
\end{equation}
where we have set $\bh = \bg - \lambda^{-1}\nabla(P'(\eta_0)f)$.  
In what follows, restore the notation of $\HS$ like $B^s_{q,1}(\HS)$, 
$\|\cdot\|_{B^s_{q,1}(\HS)}$ etc. 

As a first step to analyze equations \eqref{SL:1}, we shall prove the following
theorem.
\begin{thm}\label{thm:4.1}
Let $1 < q < \infty$ and $-1 + N/q \leq s < 1/q$.  Let $\sigma > 0$ be a small number such that 
$-1+1/q < s-\sigma < s < s+\sigma < 1/q$, and let $\nu=s$ or $s\pm\sigma$. 
 Let $\eta_0(x) = \rho_* + \tilde\eta_0(x)$ with 
$\tilde\eta_0\in B^{s+1}_{q,1}(\HS)$.
 Let $\gamma_1>0$  be the constant  given in
Theorem \ref{thm:4.0}. 
Then, there exist   $\gamma_2 \geq \gamma_1$ and an operator family 
$\CS(\lambda)$
such that $\CS(\lambda) \in {\rm Hol}\,(\Sigma_\mu+\gamma_2, \CL(B^s_{q,1}(\HS),
B^{s+2}_{q,1}(\HS)))$, for any $\lambda \in \Sigma_\mu+\gamma_2$ and $\bh \in B^\nu_{q,1}(\HS)$
$\bv=\CS(\lambda)\bh$ is a unique solution of equations \eqref{SL:1}, and there hold
\begin{align*}\|(\lambda, \lambda^{1/2}\bar\nabla, \bar\nabla^2)\CS(\lambda)\bh
\|_{B^s_{q,1}(\HS)} \leq C\|\bh\|_{B^s_{q,1}(\HS)}, \\
\|(\lambda, \lambda^{1/2}\bar\nabla, \bar\nabla^2)\pd_\lambda\CS(\lambda)\bh
\|_{B^s_{q,1}(\HS)} \leq C|\lambda|^{-1}\|\bh\|_{B^s_{q,1}(\HS)}.
\end{align*}
\par
Moreover, 
there are two operator families $\CS^i(\lambda) 
\in {\rm Hol}\,(\Sigma_\mu+\gamma_2, \CL(B^\nu_{q,1}(\HS), B^{\nu+2}_{q,1}(\HS)))$ 
$(i=1,2)$ such that
$\CS(\lambda) = \CS^1(\lambda)+\CS^2(\lambda)$, 
%and there hold
%\begin{align*}
%\|(\lambda, \lambda^{1/2}\bar\nabla, \bar\nabla^2)\CS^1(\lambda)\bh
%\|_{B^s_{q,1}(\HS)} &\leq C\|\bh\|_{B^\nu_{q,1}(\HS)}, \\
%\|(\lambda, \lambda^{1/2}\bar\nabla, \bar\nabla^2)\pd_\lambda\CS^1(\lambda)\bh
%\|_{B^s_{q,1}(\HS)}
%&\leq C|\lambda|^{-1}\|\bh\|_{B^s_{q,1}(\HS)},
%\end{align*}
%for any $\lambda \in \Sigma_\mu+ \gamma_2$ and $\bh \in B^\nu_{q,1}(\HS)^N$; 
 \begin{align*}
\|(\lambda, \lambda^{1/2}\bar\nabla, \bar\nabla^2)\CS^1(\lambda)\bh
\|_{B^s_{q,1}(\HS)} &\leq C|\lambda|^{-\frac{\sigma}{2}}\|\bh\|_{B^{s+\sigma}_{q,1}(\HS)}, \\
\|(\lambda, \lambda^{1/2}\bar\nabla, \bar\nabla^2)\pd_\lambda\CS^1(\lambda)\bh
\|_{B^s_{q,1}(\HS)} &\leq C|\lambda|^{-(1-\frac{\sigma}{2})}\|\bh\|_{B^{s-\sigma}_{q,1}(\HS)}
\end{align*}
for any $\lambda \in \Sigma_\mu+ \gamma_2$ 
 and $\bh \in C^\infty_0(\HS)$, and
\begin{align*}
\|(\lambda, \lambda^{1/2}\bar\nabla, \bar\nabla^2)\CS^2(\lambda)\bh
\|_{B^s_{q,1}(\HS)} \leq C|\lambda|^{-1}\|\bh\|_{B^s_{q,1}(\HS)}, \\
\|(\lambda, \lambda^{1/2}\bar\nabla, \bar\nabla^2)\pd_\lambda\CS^2(\lambda)\bh
\|_{B^s_{q,1}(\HS)} \leq C|\lambda|^{-2}\|\bh\|_{B^s_{q,1}(\HS)}
\end{align*}
for any $\lambda \in \Sigma_\mu+ \gamma_2$
 and $\bh \in B^s_{q,1}(\HS)$. \par
 Here, the constants $\gamma_2$ and $C$ depend on $\rho_*$ and $\|\tilde\eta_0\|_{B^{s+1}_{q,1}}$.
\end{thm}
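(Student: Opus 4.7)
The plan is to realize \eqref{SL:1} as a small perturbation of the Lam\'e system treated in Theorem \ref{thm:4.0}. Let $\CT(\lambda)$ denote the Lam\'e solution operator, so that $\bw=\CT(\lambda)\bk$ solves $\eta_0\lambda\bw-\alpha\Delta\bw-\beta\nabla\dv\bw=\bk$ with $\bw|_{\pd\HS}=0$, and introduce the perturbation
\begin{equation*}
\CP(\lambda)\bv:=\lambda^{-1}\nabla(P'(\eta_0)\eta_0\dv\bv).
\end{equation*}
Equation \eqref{SL:1} is then equivalent to the fixed point problem $(\bI-\CT(\lambda)\CP(\lambda))\bv=\CT(\lambda)\bh$. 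The crucial structural feature is that $\CP(\lambda)$ already carries the small factor $\lambda^{-1}$, so for $|\lambda|$ large enough the Neumann series converges. Concretely, using Lemma \ref{lem:APH} together with the embedding $B^{s+1}_{q,1}(\HS)\hookrightarrow B^{N/q}_{q,1}(\HS)\cap L_\infty(\HS)$ (valid since $s+1\geq N/q$) and a standard composition estimate for the smooth map $\rho\mapsto P'(\rho)\rho$ applied to $\tilde\eta_0$, one gets
\begin{equation*}
\|\CP(\lambda)\bv\|_{B^\nu_{q,1}(\HS)}\leq C|\lambda|^{-1}\|\bar\nabla^2\bv\|_{B^\nu_{q,1}(\HS)}
\end{equation*}
for $\nu=s$ or $s\pm\sigma$, where $C$ depends on $\rho_*$ and $\|\tilde\eta_0\|_{B^{s+1}_{q,1}(\HS)}$. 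Combined with \eqref{est:1.1}, this gives the operator bound $\|\CT(\lambda)\CP(\lambda)\|_{\CL(B^{\nu+2}_{q,1})}\leq C|\lambda|^{-1}$, which is $\leq 1/2$ for $|\lambda|\geq\gamma_2$ with $\gamma_2\geq\gamma_1$ chosen large enough; hence $\CS(\lambda):=(\bI-\CT(\lambda)\CP(\lambda))^{-1}\CT(\lambda)$ is well-defined, holomorphic in $\Sigma_\mu+\gamma_2$, and satisfies the first two claimed estimates directly from Theorem \ref{thm:4.0}.

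For the decomposition, the natural splitting is
\begin{equation*}
\CS^1(\lambda):=\CT(\lambda),\qquad \CS^2(\lambda):=(\bI-\CT(\lambda)\CP(\lambda))^{-1}\CT(\lambda)\CP(\lambda)\CT(\lambda),
\end{equation*}
so that $\CS=\CS^1+\CS^2$. The bounds for $\CS^1$ are literally \eqref{est:1.1*} and \eqref{est:1.1**} from Theorem \ref{thm:4.0}. For $\CS^2$, the middle factor $\CP(\lambda)$ contributes the extra $|\lambda|^{-1}$ via the product estimate above, while the outer $(\bI-\CT\CP)^{-1}$ and the two copies of $\CT$ are bounded uniformly in $\lambda$; this yields the $|\lambda|^{-1}$ decay claimed for $\CS^2$. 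For the $\pd_\lambda$ estimate on $\CS^2$, differentiate $\CS=(\bI-\CT\CP)^{-1}\CT$ and use $\pd_\lambda\CP=-\lambda^{-1}\CP$ together with \eqref{est:1.1} and \eqref{est:1.1**} to obtain
\begin{equation*}
\pd_\lambda\CS^2=(\bI-\CT(\lambda)\CP(\lambda))^{-1}\bigl[\pd_\lambda(\CT(\lambda)\CP(\lambda))\,\CS(\lambda)+\CT(\lambda)\CP(\lambda)\,\pd_\lambda\CT(\lambda)\bigr],
\end{equation*}
in which each summand acquires one additional factor $|\lambda|^{-1}$ beyond the estimate for $\CS^2(\lambda)$ itself, giving the required $|\lambda|^{-2}$ decay.

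The step I expect to require the most care is the product estimate for $\CP(\lambda)\bv$: one must write $\nabla(P'(\eta_0)\eta_0\dv\bv)=\nabla(P'(\eta_0)\eta_0)\,\dv\bv+P'(\eta_0)\eta_0\,\nabla\dv\bv$, control both terms in $B^\nu_{q,1}(\HS)$ via Lemma \ref{lem:APH}, and verify that $P'(\eta_0)\eta_0-P'(\rho_*)\rho_*$ and $\nabla(P'(\eta_0)\eta_0)$ belong to $B^{N/q}_{q,1}(\HS)$ through the composition $F(\tilde\eta_0)$ with $F(u)=P'(\rho_*+u)(\rho_*+u)-P'(\rho_*)\rho_*$ (smooth with $F(0)=0$). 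This is where the hypothesis $s\geq -1+N/q$ and the assumption \eqref{assump:0} (ensuring $\eta_0$ stays in a fixed range where $P$ is smooth) are used, and it is precisely where the constants $\gamma_2$ and $C$ acquire their dependence on $\rho_*$ and $\|\tilde\eta_0\|_{B^{s+1}_{q,1}(\HS)}$. Once this is in hand, the rest of the argument is a mechanical application of the Neumann series together with Theorem \ref{thm:4.0}.
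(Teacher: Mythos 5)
Your proposal is correct and, once one unwinds the algebra, is essentially the argument of the paper: both realize \eqref{SL:1} as a Neumann perturbation of the Lam\'e solution operator $T(\lambda)$ from Theorem~\ref{thm:4.0}, both estimate $\nabla(P'(\eta_0)\eta_0\dv\bv)$ via Lemma~\ref{lem:APH} and Lemma~\ref{lem:Hasp} together with the embedding $B^{s+1}_{q,1}\hookrightarrow B^{N/q}_{q,1}$ (from $s+1\geq N/q$), and both pick $\gamma_2$ large enough to make the series converge.

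The one cosmetic difference is which side the Neumann series sits on. The paper inserts $\bv=T(\lambda)\bh'$, obtains $\bI-\lambda^{-1}R(\lambda)$ on the data space $B^\nu_{q,1}$ (with $R(\lambda)=\nabla(P'(\eta_0)\eta_0\dv T(\lambda)\cdot)$), and sets $\CS=T(\bI-\lambda^{-1}R)^{-1}$, so that $T$ is always applied last: the full $(\lambda,\lambda^{1/2}\bar\nabla,\bar\nabla^2)$ resolvent structure is then inherited directly from \eqref{est:1.1}--\eqref{est:1.1**}. You set up the fixed point on the solution space, $\CS=(\bI-T\CP)^{-1}T$ with $\CP=\lambda^{-1}\nabla(P'(\eta_0)\eta_0\dv\cdot)$; by the standard identity $T(\bI-\CP T)^{-1}=(\bI-T\CP)^{-1}T$ and $\lambda^{-1}R=\CP T$, this is the same operator, and your $\CS^1=T$, $\CS^2=(\bI-T\CP)^{-1}T\CP T$ coincide with the paper's $\CS^1=T$, $\CS^2=T\,\lambda^{-1}R\,(\bI-\lambda^{-1}R)^{-1}$. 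The only place your write-up is slightly terser than it should be is the claim that boundedness of the outer $(\bI-T\CP)^{-1}$ on $B^{\nu+2}_{q,1}$ ``yields'' the $(\lambda,\lambda^{1/2}\bar\nabla,\bar\nabla^2)$ bounds: strictly, that operator norm only controls $\|\cdot\|_{B^{\nu+2}_{q,1}}$, and to recover the weighted resolvent norms you need to use the fixed-point identity $\CS^2=T\CP T+T\CP\,\CS^2$ once more so that $T$ acts last. With that one-line wrap-around, your proof closes; alternatively one can, as the paper does, keep the inverse on the source side from the start so no wrap-around is needed. Everything else, including the formula for $\pd_\lambda\CS^2$ and the resulting $|\lambda|^{-2}$ decay, is as in the paper.
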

\begin{proof}
Since $C^\infty_0(\HS)$ is dense in $B^\nu_{q,1}(\HS)$ with $1 < q < \infty$ and
$-1+1/q < \nu < 1/q$.  Thus, we assume that $\bh \in C^\infty_0(\HS)$ in the sequel
if we do not mention the functional spaces which $\bh$ belongs to. 
First of all, we shall solve equations \eqref{SL:1} for $\lambda \in \Sigma_\mu+ \gamma_2$
with large $\gamma_2\geq \gamma_1$. 
By Lemma \ref{lem:APH}, 
$$
\|\nabla(P'(\eta_0)\eta_0\dv\bv)\|_{B^\nu_{q,1}}
\leq C(\|(P''(\eta_0)\eta_0 + P'(\eta_0))
(\nabla\tilde\eta_0)\dv\bv\|_{B^\nu_{q,1}}
+ \|P'(\eta_0)\eta_0 \nabla\dv\bv\|_{B^\nu_{q,1}}).
$$
We now use the following lemma for the Besov norm estimate of composite functions cf. 
\cite[Proposition 2.4]{H11} and \cite[Theorem 2.87]{BCD}.
\begin{lem}\label{lem:Hasp} Let $1 < q < \infty$. 
Let $I$ be an open interval of $\BR$.  Let $\omega>0$ and let $\tilde\omega$ be the smallest
integer such that $\tilde\omega \geq \omega$. Let $F:I \to \BR$ satisfy $F(0) = 0$ and 
$F' \in BC^{\tilde\omega}(I, \BR)$.  Assume that $v\in B^\omega_{q,r}$
has valued in $J \subset\subset I$.  Then, 
$F(v) \in B^\omega_{q,1}$ and there exists a constant $C$ depending only on 
$\nu$, $I$, $J$, and $N$, such that 
$$\|F(v)\|_{B^\omega_{q,1}} \leq C(1 + \|v\|_{L_\infty})^{\tilde\omega}
\|F'\|_{BC^{\tilde\omega}(I,\BR)}
\|v\|_{B^\omega_{q,1}}.$$
\end{lem}
Recalling that $\eta_0 = \rho_* + \tilde\eta_0$, we write
\begin{align*}
&(P''(\eta_0)\eta_0 + P'(\eta_0)) \\
&= (P''(\rho_*)+ \int^1_0P'''(\rho_* + \ell\tilde\eta_0)\,\d\ell \eta_0)
 (\rho_*+\tilde\eta_0) + P'(\rho_*) + \int^1_0P''(\rho_* + 
\ell\tilde\eta_0)\,\d\ell \tilde\eta_0\\
& = P''(\rho_*)\rho_*+ P'(\rho_*) + Q_1(\tilde\eta_0)
\end{align*}
where we have set
\begin{align*}
Q_1(u) & = \rho_*\int^1_0P'''(\rho_*+\ell u)\,\d\ell\, u
+ (P''(\rho_*)+ \int^1_0P'''(\rho_* + \ell u)\,\d\ell\,u)u
+ \int^1_0P''(\rho_* + \ell u)\,\d\ell\, u.
\end{align*}
In view of \eqref{assump:0}, $\rho_1-\rho_* < \tilde\eta_0(x) < \rho_2 - \rho_*$
and $\rho_1-\rho_* < 0 < \rho_2-\rho_*$. 
Thus, we may assume that 
there exists an $\kappa_0 > 0$ such that 
$\rho_1 < \rho_* - \kappa_0 < \rho_* + \kappa_0 < \rho_2$ for any $x \in \overline{\HS}$ and 
$|\eta_0(x)| <\kappa_0$ for any $x \in \overline{\HS}$.   
In particular, 
we may assume that
\begin{equation}\label{assump:2.1}
| \ell\tilde\eta_0(x)| < \kappa_0
\end{equation}
for any $\ell \in [0, 1]$ and $x \in \overline{\HS}$.
From this observation, we may assume that $Q_1(u)$ is defined for 
$u \in [-\kappa_0, \kappa_0]$ and $Q_1(0) = 0$.

By Lemmas \ref{lem:APH} and \ref{lem:Hasp}  we have 
\begin{align*}
&\|(P''(\eta_0)\eta_0+ P'(\eta_0))\nabla\tilde\eta_0
\dv\bv\|_{B^{s}_{q,1}(\HS)}\\
&\quad  \leq  C(|P''(\rho_*)\rho_*+P'(\rho_*)|\|\nabla\tilde \eta_0\|_{B^{s}_{q,1}(\HS)}
\|\dv\bv\|_{B^{N/q}_{q,1}(\HS)} \\
&\qquad 
+ \|Q_1(\tilde\eta_0)\|_{B^{N/q}_{q,1}(\HS)}\|\nabla\tilde\eta_0\|_{B^{s}_{q,1}(\HS)}
\|\dv\bv\|_{B^{N/q}_{q,1}(\HS)} \\
&\quad \leq C(\rho_*, \|\tilde\eta_0\|_{B^{s+1}_{q,1}(\HS)})\|\bv\|_{B^{s+2}_{q,1}(\HS)} .
\end{align*}
Here, we have use the assumption that $N/q \leq s+1$, and 
$C(\rho_*, \|\tilde\eta_0\|_{B^{s+1}_{q,1}(\HS)})$ denotes a constant depending on $\rho_*$,  
$\|\tilde\eta_0\|_{B^{s+1}_{q,1}(\HS)}$. 

Likewise, we write
$$P'(\eta_0)\eta_0 = (P'(\rho_*) + \int^1_0 P''(\rho_*+\ell\tilde\eta_0)\,
\d\ell \tilde\eta_0)(\rho_* + \tilde\eta_0)
= P'(\rho_*)\rho_* + Q_2(\tilde\eta_0),
$$
where we have set
$$Q_2(u) = \int^1_0P''(\rho_* + \ell u)\,\d\ell \, u \rho_* + 
(P'(\rho_*) + \int^1_0 P''(\rho_* + \ell u)\,\d\ell \, u)u.$$
$Q_2(u)$ is defined for  $u \in [-\kappa_0, \kappa_0]$ and  $Q_2(0) = 0$.
By Lemmas \ref{lem:APH} and  \ref{lem:Hasp}, we have
$$\|P'(\eta_0)\eta_0\nabla\dv\bv\|_{B^s_{q,1}(\HS)}
\leq C(|P'(\rho_*)\rho_* | + (1+\|\tilde\eta_0\|_{L_\infty(\HS)})^m\|\tilde\eta_0
\|_{B^{N/q}_{q,1}(\HS)})\|\bv\|_{B^{s+2}_{q,1}(\HS)}.
$$
for some integer $m \geq 1$. Therefore, we have
\begin{equation}\label{23.jn.20.3}
\|\nabla(P'(\eta_0)\eta_0\dv\bv)\|_{B^s_{q,1}(\HS)} \leq 
C(\rho_*, \|\tilde\eta_0
\|_{B^{s+1}_{q,1}(\HS)})\|\bv\|_{B^{s+2}_{q,1}(\HS)}.
\end{equation}
Choosing $\gamma_2\geq \gamma_1$ so large that 
$\gamma_2^{-1}C(\rho_*, \|\tilde\eta_0
\|_{B^{s+1}_{q,1}(\HS)}) \leq 1/2$, we have
$$
|\lambda|^{-1}\|\nabla(P'(\eta_0)\eta_0\dv\bv)\|_{B^s_{q,1}(\HS)}
\leq (1/2)\|\bv\|_{B^{s+2}_{q,1}(\HS)}
$$
for any $\lambda \in \Sigma_\mu + \gamma_2$. 
\par
Let $\bv$ be a unique solution of equations \eqref{lame:2} for $\bg$ and let 
$T(\lambda)$ be an operator defined by $\bv = T(\lambda)\bg$. For any $\bh \in B^\nu_{q,1}(\HS)$,
we insert $\bu = T(\lambda)\bh$ into equations \eqref{SL:1}, and then we have
\begin{align*}
&\eta_0(x)\lambda T(\lambda)\bh 
- \alpha \Delta T(\lambda)\bh - \beta\nabla\dv T(\lambda)\bh
-\lambda^{-1}\nabla(P'(\eta_0)\eta_0\dv T(\lambda)\bh) \\
&\quad = \bh -\lambda^{-1}\nabla(P'(\eta_0)\eta_0\dv T(\lambda)\bh)
\quad\text{in $\HS$}, \quad T(\lambda)\bh|_{\pd\HS}=0.
\end{align*}
If we set $R(\lambda) \bh = \nabla(P'(\eta_0)\eta_0\dv T(\lambda)\bh)$,
by \eqref{23.jn.20.3} and \eqref{est:1.1}, we have
\begin{equation}\label{est:r.1}
\|R(\lambda)\bh\|_{B^s_{q,1}(\HS)} \leq C\|\bh\|_{B^s_{q,1}(\HS)}, \quad
\|\pd_\lambda R(\lambda)\bh\|_{B^s_{q,1}(\HS)} \leq C|\lambda|^{-1}\|\bh\|_{B^s_{q,1}(\HS)},
\end{equation}
Choosing $\gamma_2 \geq \gamma_1$ so large that $\gamma_2^{-1}C \leq 1/2$, we see that 
the inverse operator $(\bI - \lambda^{-1}R(\lambda))^{-1} 
= \bI + \lambda^{-1}R(\lambda)\sum_{j=0}^\infty (\lambda^{-1}R(\lambda))^j$
exists as a bounded linear operator on $B^s_{q,1}(\HS)$. Thus, if we define $\CS(\lambda)$
by $\CS(\lambda)
= T(\lambda)(\bI - \lambda^{-1}R(\lambda))^{-1}$, then $\CS(\lambda)$
is a solution operator of equations \eqref{SL:1}, that is $\bv = \CS(\lambda)\bh$ is
a unique solution of equations \eqref{SL:1}.  Moreover, by \eqref{est:1.1} we have
\begin{equation}\label{est:2.1}
\|(\lambda, \lambda^{1/2}\bar\nabla, \bar\nabla^2)\CS(\lambda)\bh\|_{B^s_{q,1}(\HS)}
\leq C\|\bh\|_{B^s_{q,1}(\HS)}
\end{equation}
for any $\lambda \in \Sigma_\mu + \gamma_2$ and $\bh \in B^s_{q,1}(\HS)$. \par
Writing $(\bI - \lambda^{-1}R(\lambda))^{-1} 
= \bI + \lambda^{-1}R(\lambda)\sum_{j=0}^\infty (\lambda^{-1}R(\lambda))^j
=\bI + \lambda^{-1}R(\lambda)(\bI-\lambda^{-1}R(\lambda))^{-1}$
we set 
\begin{equation}\label{operator:2.1}\begin{aligned}
\CS^1(\lambda) = T(\lambda), \quad 
\CS^2(\lambda) =T(\lambda)\lambda^{-1}R(\lambda)
(\bI-\lambda^{-1}R(\lambda))^{-1}
\end{aligned}\end{equation}
Obviously, $\CS(\lambda) = \CS^1(\lambda) + \CS^2(\lambda)$.
By \eqref{est:1.1}, \eqref{est:1.1*} and \eqref{est:1.1**}, we have
\begin{align}
\|(\lambda, \lambda^{1/2}\bar\nabla, \bar\nabla^2)\CS^1(\lambda)\bh\|_{B^s_{q,1}(\HS)}
&\leq C\|\bh\|_{B^s_{q,1}(\HS)}, \label{est:2.2}\\
\|(\lambda, \lambda^{1/2}\bar\nabla, \bar\nabla^2)\pd_\lambda\CS^1(\lambda)\bh\|_{B^s_{q,1}(\HS)}
& \leq C|\lambda|^{-1}\|\bh\|_{B^s_{q,1}(\HS)} \label{est:2.3}
\end{align}
for any $\lambda \in \Sigma_\mu + \gamma_2$ and $\bh \in B^\nu_{q,1}(\HS)$ as well as 
\begin{align}
\|(\lambda, \lambda^{1/2}\bar\nabla, \bar\nabla^2)\CS^1(\lambda)\bh\|_{B^s_{q,1}(\HS)}
&\leq C|\lambda|^{-\frac{\sigma}{2}}\|\bh\|_{B^{s+\sigma}_{q,1}(\HS)},\label{est:2.4} \\
\|(\lambda, \lambda^{1/2}\bar\nabla, \bar\nabla^2)\pd_\lambda\CS^1(\lambda)\bh\|_{B^s_{q,1}(\HS)}
&\leq C|\lambda|^{-(1-\frac{\sigma}{2})}\|\bh\|_{B^{s-\sigma}_{q,1}(\HS)} \label{est:2.5}
\end{align}
for any $\lambda \in \Sigma_\mu + \gamma_2$ and $\bh \in C^\infty_0(\HS)$. \par
Moreover, by \eqref{est:1.1} and the first inequality of \eqref{est:r.1} we have
\begin{equation}\label{est:2.6}
\|(\lambda, \lambda^{1/2}\bar\nabla, \bar\nabla^2)\CS^2(\lambda)\bh\|_{B^\nu_{q,1}(\HS)}
\leq C|\lambda|^{-1}\|\bh\|_{B^{\nu}_{q,1}(\HS)}
\end{equation}
for any $\lambda \in \Sigma_\mu+\gamma_2$ and $\bh \in B^\nu_{q,1}(\HS)$. 
We write
\begin{align*}
\pd_\lambda \CS^2(\lambda) &=(\pd_\lambda T(\lambda))\lambda^{-1}R(\lambda)
(\bI-\lambda^{-1}R(\lambda)) \\
&+ T(\lambda)\pd_\lambda(\lambda^{-1}R(\lambda))
(\bI - \lambda^{-1}R(\lambda))^{-1}\\
&+ T(\lambda)(\lambda^{-1}R(\lambda)(\bI-\lambda^{-1}R(\lambda))^{-1}
(\pd_\lambda (\lambda^{-1}R(\lambda)))(\bI-\lambda^{-1}R(\lambda))^{-1}.
\end{align*}
Using the estimate
$\|\pd_\lambda T(\lambda)\bh\|_{B^s_{q,1}(\HS)} \leq C|\lambda|^{-1}\|\bh\|_{B^s_{q,1}(\HS)}$
as follows from \eqref{est:1.1} and \eqref{est:r.1}, we have
\begin{equation}\label{est:2.7}
\|(\lambda, \lambda^{1/2}\bar\nabla, \bar\nabla^2)
\pd_\lambda \CS^2(\lambda)\bh\|_{B^s_{q,1}(\HS)} \leq C|\lambda|^{-2}\|\bh\|_{B^s_{q,1}(\HS)}
\end{equation}
for any $\lambda \in \Sigma_\mu + \gamma_2$ and $\bh \in B^s_{q,1}(\HS)$.  Here, we have
used $|\lambda|^{-3} \leq (\gamma_2\sin \epsilon)^{-1}|\lambda|^{-2}$ in the estimate of the last term.
Combining \eqref{est:2.3} and \eqref{est:2.7} yields
\begin{equation}\label{est:2.8}
\|(\lambda, \lambda^{1/2}\bar\nabla, \bar\nabla^2)
\pd_\lambda \CS(\lambda)\bh\|_{B^s_{q,1}(\HS)}\leq C|\lambda|^{-1}\|\bh\|_{B^s_{q,1}(\HS)}
\end{equation}
for any $\lambda \in \Sigma_\mu + \gamma_2$ and $\bh \in B^\nu_{q,1}(\HS)$.
This completes the proof of Theorem \ref{thm:4.1}.

\end{proof}
%%%%%%%%%% こここまで
{\bf Proof of Theorem \ref{thm:3}.}~ Recall the symbols defined in \eqref{space:1}, which will
be used below. 
Let $\bv = \CS(\lambda)(\bg-\lambda^{-1}\nabla(P'(\eta_0)f)$, 
and then $\bv$ is a unique solution of equations \eqref{SL:1}
with $\bh= \bg-\lambda^{-1}\nabla(P'(\eta_0)f)$. Using the formula $\CS(\lambda)
= \CS^1(\lambda) + \CS^2(\lambda)$, we divide $\bv$ as $\bv=\bv_1 + \bv_2$, where
\begin{align*}
\bv_1 &= \CS^1(\lambda)\bg, \quad
\bv_2 = \CS^2(\lambda)\bg - \lambda^{-1}\CS(\lambda)\nabla(P'(\eta_0)f).
\end{align*}
By Lemmas \ref{lem:APH} and \ref{lem:Hasp}, and the assumption: $N/q \leq s+1$,  we have
$$\|\nabla(P'(\eta_0)f)\|_{B^s_{q,1}(\HS)} \leq C(\rho_*, \|\tilde\eta_0\|_{B^{s+1}_{q,1}(\HS)})
\|f\|_{B^{s+1}_{q,1}(\HS)}.$$
Combining  \eqref{est:2.1}, \eqref{est:2.2}, \eqref{est:2.4}, \eqref{est:2.5}, \eqref{est:2.6}, 
\eqref{est:2.7}, \eqref{est:2.8}, we have 
\begin{equation}\label{est:2.10}\begin{aligned}
\|(\lambda, \lambda^{1/2}\bar\nabla, \bar\nabla^2)\bv\|_{B^s_{q,1}(\HS)} & \leq C
\|(f, \bg)\|_{\CH^s_{q,1}(\HS)}, \\
\|(\lambda, \lambda^{1/2}\bar\nabla, \bar\nabla^2)\pd_\lambda \bv\|_{B^s_{q,1}(\HS)} 
& \leq C|\lambda|^{-1}
\|(f, \bg)\|_{\CH^s_{q,1}(\HS)}, \\
\|(\lambda, \lambda^{1/2}\bar\nabla, \bar\nabla^2)\bv_2\|_{B^s_{q,1}(\HS)}
& \leq C|\lambda|^{-1}\|(f, \bg)\|_{\CH^s_{q,1}(\HS)}, \\
\|(\lambda, \lambda^{1/2}\bar\nabla, \bar\nabla^2)\pd_\lambda\bv_2\|_{B^s_{q,1}(\HS)}
& \leq C|\lambda|^{-2}\|(f, \bg)\|_{\CH^s_{q,1}(\HS)}
\end{aligned}\end{equation}
for any $\lambda \in \Sigma_\mu + \gamma_2$ and $(f, \bg) \in \CH^s_{q,1}(\HS)$.
Moreover,
\begin{equation}\label{est:2.9}\begin{aligned}
\|(\lambda, \lambda^{1/2}\bar\nabla, \bar\nabla^2)\bv_1\|_{B^s_{q,1}(\HS)} & \leq C
\|\bg\|_{B^{s+\sigma}_{q,1}(\HS)}, \\ 
\|(\lambda, \lambda^{1/2}\bar\nabla, \bar\nabla^2)\pd_\lambda\bv_1\|_{B^s_{q,1}(\HS)} & \leq C
\|\bg\|_{B^{s-\sigma}_{q,1}(\HS)},
\end{aligned}\end{equation}
for any $\lambda \in \Sigma_\mu + \gamma_2$ and $\bg \in C^\infty_0(\HS)^N$. 
\par 
Finally, define $\rho$ by  
$\rho=\lambda^{-1}(f-\eta_0\dv\bv)$.  By Lemmas \ref{lem:APH}
and  \ref{lem:Hasp} and $N/q \leq s+1$, we have  
\begin{align*}
\|\lambda\rho\|_{B^{s+1}_{q,1}(\HS)} &\leq C(\|f\|_{B^{s+1}_{q,1}(\HS)} 
+ C(\rho_*,  \|\tilde\eta_0\|_{B^{s+1}_{q,1}(\HS)})
\|\bv\|_{B^{s+2}_{q,1}(\HS)})
\\
& \quad \leq  C(\rho_*, \|\nabla\tilde\eta_0\|_{B^{s+1}_{q,1}})(\|f\|_{B^{s+1}_{q,1}(\HS)}
+ \|\bg\|_{B^s_{q,1}(\HS)})
\end{align*}
for every $\lambda \in \Sigma_\mu + \gamma_2$, which, combined with the first inequality in
\eqref{est:2.9}, implies the required resolvent estimate: 
$$\|\lambda(\rho, \bv)\|_{\CH^s_{q,1}(\HS)} 
+ \|(\lambda^{1/2}\bar\nabla, \bar\nabla^2)\bv\|_{B^s_{q,1}(\HS)} 
\leq C\|(f, \bg)\|_{\CH^s_{q,1}(\HS)}$$
for any $\lambda \in \Sigma_\mu+\gamma_2$ and $(f, \bg) \in \CH^s_{q,1}(\HS)$.\par
%%%%%%
We now prove \eqref{fundest.2**} and \eqref{fundest.3**}. By Lemmas \ref{lem:APH} and \ref{lem:Hasp}
and the assumption: $N/q \leq s+1$, 
\begin{align*}
\|\rho\|_{B^{s+1}_{q,1}(\HS)}& \leq C|\lambda|^{-1}(\|f\|_{B^{s+1}_{q,1}(\HS)} + C(\rho_*,
\|\tilde\eta_0\|_{B^{s+1}_{q,1}(\HS)}\|\bv\|_{B^{s+2}_{q,1}(\HS)}) 
\leq C|\lambda|^{-1}\|(f, \bg)\|_{\CH^s_{q,1}(\HS)}
\end{align*}
for any $\lambda \in \Sigma_\mu + \gamma_2$ and $(f, \bg) \in \CH^s_{q,1}(\HS)$.
Differentiating the definition of $\rho$ with respect to $\lambda$ implies 
$$\pd_\lambda \rho= -\lambda^{-2}(f -\eta_0\dv\bv) + \lambda^{-1} \eta_0\dv\pd_\lambda\bv.$$
Since 
$$\|\eta_0\dv\pd_\lambda^\ell\bv\|_{B^{s+1}_{q,1}(\HS)} 
\leq C(\rho_*, \|\tilde\eta_0\|_{B^{s+1}_{q,1}(\HS)})\|\dv\pd_\lambda\bv\|_{B^{s+1}_{q,1}(\HS)}
\quad(\ell=0,1)
$$
as follows from Lemmas \ref{lem:APH} and \ref{lem:Hasp} and the assumption:
$N/q \leq s+1$, by the first two inequality in \eqref{est:2.10} we have
$$\|\pd_\lambda\rho\|_{B^{s+1}_{q,1}(\HS)} \leq C|\lambda|^{-2}\|(f, \bg)\|_{\CH^s_{q,1}(\HS)}$$
for any $\lambda \in \Sigma_\mu + \gamma_2$ and $(f, \bg) \in \CH^s_{q,1}(\HS)$.
This completes the proof of Theorem \ref{thm:3}. \qed

%%%%%%%%%%55
\section{$L_1$ semigroup}

In this section, we assume that $1 < q < \infty$, $-1+N/q \leq s < 1/q$, $\sigma>0$, and 
$-1/q < s-\sigma < s < s+\sigma < 1/q$. 
Let $\eta_0(x) = \rho_* + \tilde\eta_0(x)$ with $\tilde\eta_0(x) \in B^{s+1}_{q,1}(\HS)$
and $\eta_0(x)$ satisfies  the conditions \eqref{assump:0}. 
In the sequel, let $\mu \in (0, \pi/2) $ be  fixed and let $\gamma>0$ be a constant given in 
Theoerem \ref{thm:3}.  \par 
In this section, we consider evolution equations:
\begin{equation}\label{semi:1}\left\{\begin{aligned}
\pd_t\rho + \eta_0(x)\dv\bu &=F&\quad&\text{in $\HS\times(0, T)$}, \\
\eta_0(x)\pd_t\bu - \alpha\Delta\bu - \beta\nabla\dv\bu + \nabla(P'(\eta_0(x))\rho)
& = \bG &\quad&\text{in $\HS\times(0, T)$}, \\
\bu|_{\pd\HS} = 0, \quad (\rho, \bu) = (f, \bg)&&\quad
&\text{in $\HS$}.
\end{aligned}\right.\end{equation}
The corresponding generalized resolvent problem to  \eqref{semi:1} is 
equations \eqref{s:2}.   Let  $\CH^s_{q,1}(\HS)$ and $\CD^s_{q,1}(\HS)$ be the spaces defined 
in \eqref{space:1} while $\|\cdot\|_{\CH^s_{q,1}(\HS)}$ and $\|\cdot\|_{\CD^s_{q,1}(\HS)}$
are their norms defined also in \eqref{space:1}. 
Let $\CA$ be an operator  defined by  
\begin{equation}\label{op:1}
\CA(\rho, \bu) = (\eta_0\dv\bu, \,\,
\eta_0(x)^{-1}(-\alpha\Delta\bu -
\beta\nabla\dv\bu+ \nabla(P'(\eta_0(x)\rho)))
\end{equation}
for $(\rho, \bu) \in \CD^s_{q,1}(\HS)$. Then, problem \eqref{s:2}  reads as 
\begin{equation}\label{resol:0}
(\lambda\bI + \CA)(\rho, \bu)= (f, \eta_0(x)^{-1}\bg).
\end{equation}
Noticing that $\eta_0(x)^{-1} = \rho_*^{-1} -\tilde\eta_0(x)(\rho_*(\rho_*+\tilde\eta_0(x))^{-1}$,
we see that there exists a constant $c_0>0$ depending on $\rho_*$ and 
$\|\tilde\eta_0\|_{B^{s+1}_{q,1}(\HS)}$ such that 
$$c_0^{-1}\|\bg\|_{B^\nu_{q,1}(\HS)} \leq \|\eta_0^{-1}\bg\|_{B^\nu_{q,1}(\HS)} 
\leq c_0\|\bg\|_{B^\nu_{q,1}(\HS)}.
$$
for $\nu=s$ or $\nu=s\pm \sigma$. 
Thus, Theorem \ref{thm:3} holds for the equations \eqref{resol:0}, which replaces equations \eqref{s:2}.
and so  $\CA$ generates a continuous analytic semigroup 
$\{T(t)\}_{t\geq 0}$ and  solutions $\Pi$ and $\bU$ of equations \eqref{s:1} are given by
\begin{equation}\label{sol:1}
(\Pi, \bU) = T(t)(\rho_0, \bu_0) + \int^t_0T(t-s)(F(\cdot, s), \rho_0(\cdot)^{-1}\bG(\cdot, s))\,\d s. 
\end{equation}
\par
We now prove the $L_1$ in time maximal regularity of $ \{T(t)\}_{t\geq 0}$. 
The idea of our proof here is due to  Shibata \cite{S23}, cf also 
Kuo \cite{Kuo23} and  Shibata and Watanabe \cite{ SW1}.
 Let $T_1(t)$ and $T_2(t)$ denote the mass density part of $T(t)$ and the velocity part of 
$T(t)$.  Namely, $T(t)(f, \bg) = (T_1(t)(f, \bg), T_2(t)(f, \bg))$ and $\rho= T_1(t)(f, \bg)$ and 
$\bu = T_2(t)(f, \bg)$ 
satisfy equations \eqref{semi:1} with $F=\bG=0$. 
\begin{thm}\label{thm:t.2}
Let $1 < q < \infty$ and $-1+N/q \leq  s < 1/q$.  
Let $\tilde \eta_0(x) \in B^{s+1}_{q,1}(\HS)$ and $\eta_0(x) = \rho_* + \tilde\eta_0(x)$ satisfies the 
assumption \eqref{assump:0}. Let $\gamma>0$ be a constant given in Theorem \ref{thm:3}, 
which depends on $\rho_*$ and $\|\tilde\eta_0\|_{B^{s+1}_{q,1}(\HS)}$. 
Then, there exists a constant $C>0$ depending on $\rho_*$ and $\|\eta_0\|_{B^{s+1}_{q,1}(\HS)}$
such that for any $(f, \bg) \in \CH^s_{q,1}(\HS)$, there holds 
\begin{equation}\label{L1.1}
\int^\infty_0e^{-\gamma t}(\|(\pd_t, \bar\nabla^2)T_2(t)(f, \bg)\|_{B^s_{q,1}(\HS)}
+ \|(1, \pd_t)T_1(t)(f, \bg)\|_{B^{s+1}_{q,1}(\HS)})\,\d t
\leq C\|(f, \bg)\|_{\CH}.
\end{equation}
\end{thm}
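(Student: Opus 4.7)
The plan is to realize $T(t)$ by the Dunford integral
$$T(t)(f,\bg)=\frac{1}{2\pi i}\int_\Gamma e^{\lambda t}(\rho(\lambda),\bv(\lambda))\,\d\lambda,$$
where $\Gamma=\pd(\Sigma_\mu+\gamma)$ is the standard sectorial contour oriented from $\gamma+\infty e^{-i(\pi-\mu)}$ to $\gamma+\infty e^{i(\pi-\mu)}$ and $(\rho(\lambda),\bv(\lambda))$ is the resolvent solution of \eqref{s:2} produced by Theorem~\ref{thm:3}. The velocity part will be split as $T_2=T_2^{(1)}+T_2^{(2)}$ using the decomposition $\bv=\bv_1+\bv_2$ of Theorem~\ref{thm:3}(2); the two pieces obey the very different estimates \eqref{fundest.2**} and \eqref{fundest.3**} and will be treated by two different mechanisms: real interpolation in the data for $T_2^{(1)}$, and a direct-plus-integration-by-parts dichotomy in $t$ for $T_2^{(2)}$.

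For $T_2^{(1)}$, I will use $\|\lambda\bv_1\|_{B^s_{q,1}}\leq C|\lambda|^{-\sigma/2}\|\bg\|_{B^{s+\sigma}_{q,1}}$ directly, and $\|\pd_\lambda(\lambda\bv_1)\|_{B^s_{q,1}}\leq C|\lambda|^{-(1-\sigma/2)}\|\bg\|_{B^{s-\sigma}_{q,1}}$ after one integration by parts in $\lambda$; the auxiliary piece $\|\bv_1\|_{B^s_{q,1}}\leq C|\lambda|^{-(1-\sigma/2)}\|\bg\|_{B^{s-\sigma}_{q,1}}$ needed to control $\pd_\lambda(\lambda\bv_1)=\bv_1+\lambda\pd_\lambda\bv_1$ is \eqref{est:1.1**} applied to the Lam\'e solution $\bv_1=\CS^1(\lambda)\bg$. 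Carrying out the contour bound $\int_\Gamma |e^{\lambda t}||\lambda|^{-\alpha}|d\lambda|\leq Ce^{\gamma t}t^{-(1-\alpha)}$ for $\alpha\in(0,1)$ then yields
\begin{align*}
\|\pd_tT_2^{(1)}(t)\bg\|_{B^s_{q,1}(\HS)}&\leq C\,e^{\gamma t}\,t^{-(1-\sigma/2)}\,\|\bg\|_{B^{s+\sigma}_{q,1}(\HS)},\\
\|\pd_tT_2^{(1)}(t)\bg\|_{B^s_{q,1}(\HS)}&\leq C\,e^{\gamma t}\,t^{-(1+\sigma/2)}\,\|\bg\|_{B^{s-\sigma}_{q,1}(\HS)}.
\end{align*}
For any splitting $\bg=\bg_++\bg_-$ with $\bg_\pm\in B^{s\pm\sigma}_{q,1}(\HS)$, summing the two bounds and taking the infimum gives $e^{-\gamma t}\|\pd_tT_2^{(1)}(t)\bg\|_{B^s_{q,1}}\leq C\,t^{-(1-\sigma/2)}K(t^{-\sigma},\bg)$, where $K$ is the K-functional of the couple $(B^{s+\sigma}_{q,1},B^{s-\sigma}_{q,1})$. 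The substitution $\tau=t^{-\sigma}$ converts $\int_0^\infty t^{-(1-\sigma/2)}K(t^{-\sigma},\bg)\,dt$ into $c_\sigma\int_0^\infty\tau^{-3/2}K(\tau,\bg)\,d\tau$, which by the K-method is exactly the $(B^{s+\sigma}_{q,1},B^{s-\sigma}_{q,1})_{1/2,1}$-norm of $\bg$; the identification $(B^{s+\sigma}_{q,1},B^{s-\sigma}_{q,1})_{1/2,1}=B^s_{q,1}(\HS)$ then gives the desired bound. The same scheme, with $\lambda$ replaced by the $\bar\nabla^2$ block in \eqref{fundest.2**}, handles $\bar\nabla^2 T_2^{(1)}$.

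For $T_2^{(2)}$, the direct estimate gives $\|\pd_tT_2^{(2)}(t)(f,\bg)\|_{B^s_{q,1}}\leq Ce^{\gamma t}F_1(t)\|(f,\bg)\|_\CH$ with $F_1(t)=\int_0^\infty e^{-rt\cos\mu}(\gamma+r)^{-1}\d r=O(\log(1/t))$ as $t\downarrow 0$, while one integration by parts using $\|\pd_\lambda(\lambda\bv_2)\|_{B^s_{q,1}}\leq C|\lambda|^{-2}\|(f,\bg)\|_\CH$ produces $\|\pd_tT_2^{(2)}(t)\|_{B^s_{q,1}}\leq (C/t)e^{\gamma t}F_2(t)\|(f,\bg)\|_\CH$ with $F_2(t)=\int_0^\infty e^{-rt\cos\mu}(\gamma+r)^{-2}\d r=O((\gamma^2 t)^{-1})$ as $t\to\infty$. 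Using the direct bound on $(0,1]$ (log-integrable near zero after cancelling $e^{\pm\gamma t}$) and the IBP bound on $[1,\infty)$ (with $t^{-2}$ tail), $\int_0^\infty e^{-\gamma t}\|\pd_tT_2^{(2)}\|_{B^s_{q,1}}\,\d t\leq C\|(f,\bg)\|_\CH$. The $\bar\nabla^2$ component is analogous. For the density, $T_1(t)$ itself is estimated in the same direct/IBP manner using $\|\rho(\lambda)\|_{B^{s+1}_{q,1}}\leq C|\lambda|^{-1}\|(f,\bg)\|_\CH$ and $\|\pd_\lambda\rho\|_{B^{s+1}_{q,1}}\leq C|\lambda|^{-2}\|(f,\bg)\|_\CH$ from Theorem~\ref{thm:3}(3); for the time derivative, the first equation of \eqref{semi:1} with $F=0$ gives $\pd_tT_1=-\eta_0\dv T_2$, and the Besov-algebra property of $B^{s+1}_{q,1}(\HS)$ (valid since $s+1\geq N/q$) converts the already-established $\bar\nabla^2 T_2$-bound in $B^s_{q,1}$ into the $\pd_tT_1$-bound in $B^{s+1}_{q,1}$.

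The main obstacle I expect lies in the $T_2^{(1)}$ step: tracking the precise exponents through the K-functional change of variables and invoking the real-interpolation identity $(B^{s+\sigma}_{q,1},B^{s-\sigma}_{q,1})_{1/2,1}=B^s_{q,1}(\HS)$ in the half-space requires $\sigma>0$ satisfying $-1+1/q<s-\sigma<s+\sigma<1/q$, which is compatible with $-1+N/q\leq s<1/q$ (since $N-1<q$) for $\sigma>0$ small, but must be set up carefully. Justifying the integration by parts in $\lambda$ (vanishing of the boundary terms as $|\lambda|\to\infty$ along $\Gamma$ thanks to the $|\lambda|^{-2}$-type decay built into \eqref{fundest.2**}--\eqref{fundest.3**}) and the reduction to $\bg\in C_0^\infty(\HS)^N$ needed to invoke those estimates (available because $-1+1/q<s\pm\sigma<1/q$) are routine once $\sigma$ is chosen.
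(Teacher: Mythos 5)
Your proposal is correct and follows the paper's overall strategy closely: a Dunford contour representation of $T(t)$, the decomposition $T_2 = T_2^{(1)} + T_2^{(2)}$ induced by $\bv=\bv_1+\bv_2$ from Theorem~\ref{thm:3}(2), and a real-interpolation argument that converts the pointwise-in-$t$ pair of bounds $t^{-(1\mp\sigma/2)}$ against $(\|\bg\|_{B^{s+\sigma}_{q,1}},\|\bg\|_{B^{s-\sigma}_{q,1}})$ into the desired $L_1$-in-$t$ estimate for the $\bv_1$ piece. The differences are in the plumbing. You run the interpolation directly through the $K$-functional of $(B^{s+\sigma}_{q,1},B^{s-\sigma}_{q,1})$ and the substitution $\tau=t^{-\sigma}$, whereas the paper discretizes the $t$-integral over dyadic blocks and invokes $\ell^1_1=(\ell^{1-\sigma/2}_\infty,\ell^{1+\sigma/2}_\infty)_{1/2,1}$; these are equivalent phrasings of the $K$-method. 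More substantively, for $T_2^{(2)}$ and $T_1$ you use an elementary dichotomy in $t$: the $|\lambda|^{-1}$ resolvent bound gives an integrable $\log(1/t)$ singularity on $(0,1]$, and one integration by parts with the $|\lambda|^{-2}$ bound gives an integrable $t^{-2}$ tail on $[1,\infty)$. The paper instead inserts powers of $\gamma\sin\epsilon$ to rewrite the $|\lambda|^{-1}$ and $|\lambda|^{-2}$ estimates as $|\lambda|^{-\sigma/2}$ and $|\lambda|^{-(1-\sigma/2)}$ estimates (against the unchanged $\CH$-norm), so that the same $t^{-1\pm\sigma/2}$/interpolation machinery applies uniformly to all three pieces. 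Your dichotomy is slightly more direct since it avoids an interpolation step where there is no genuine gain in data regularity; the paper's choice buys uniformity of the argument. A final minor variation: you obtain $\pd_t T_2$ by inserting $\lambda$ in the contour integral, while the paper first estimates $\bar\nabla^2 T_2$ and recovers $\pd_t T_2$ from the second equation of \eqref{semi:1}. All of these variants are valid, and the proof is sound.
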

\begin{proof}
Let $(\theta, \bv) = (\lambda+\CA)^{-1}(f, \bg)$, then $\theta \in 
B^{s+1}_{q,1}(\HS)$ and $\bv \in B^{s+2}_{q,1}(\HS)^N$ satisfy 
equations \eqref{resol:0}.  Since $C^\infty_0(\HS)$ is dense in $B^\nu_{q,1}(\HS)$ for 
$-1+1/q < \nu < 1/q$ with $1 < q < \infty$,  we may assume that 
$(f, \bg) \in B^{s+1}_{q,1}(\HS)\times C^\infty_0(\HS)^N$  below.
Thus, by Theorem \ref{thm:3} we know that there exists $\bv_i 
\in B^{s+2}_{q,1}(\HS)$ ($i=1,2$) such that $\bv= \bv_1 + \bv_2$ and there hold
\begin{align}
\|(\lambda, \bar\nabla^2)\bv_1\|_{B^{s}_{q,1}} &\leq C|\lambda|^{-\frac{\sigma}{2}}
\|\bg\|_{B^{s+\sigma}_{q,1}(\HS)},\label{6.21.1} \\
\|(\lambda, \bar\nabla^2)\bv_1\|_{B^{s}_{q,1}} &\leq C|\lambda|^{-(1-\frac{\sigma}{2})}
\|\bg\|_{B^{s-\sigma}_{q,1}(\HS)},\label{6.21.2} \\
\|(\theta, \bv_2)\|_{B^{s+1}_{q,1}(\HS) \times B^{s+2}_{q,1}(\HS)}
&\leq C|\lambda|^{-1}\|(f, \bg)\|_{\CH^s_{q,1}(\HS)},\label{6.21.3}\\
\|(\pd_\lambda \theta, \pd_\lambda \bv_2)\|_{B^{s+1}_{q,1}(\HS) \times B^{s+2}_{q,1}(\HS)}
&\leq C|\lambda|^{-2}\|(f, \bg)\|_{\CH^s_{q,1}(\HS)} \label{6.21.4}
\end{align}
for every $\lambda \in \Sigma_\mu +\gamma$. 
Here, $\gamma$ and $C$ depend on $\rho_*$ and 
$\|\tilde \eta_0\|_{B^{s+1}_{q,1}(\HS)}$.
\par
Let $\Gamma = \Gamma_+ \cup \Gamma_-$ be a contour in the complex plane $\BC$ defined by
$$\Gamma_\pm = \{\lambda = re^{i(\pi\pm\epsilon)} \mid r \in (0, \infty)\}.$$
Here, $\epsilon \in (0, \pi/2)$.  According to a  well-known  Holomorphic 
semigroup theory  (cf. \cite[p.257]{YK}), $T(t)$ is represented by
$$T(t)(f, \bg) = \frac{1}{2\pi i}\int_{\Gamma + \gamma} e^{\lambda t}
(\lambda\bI+\CA)^{-1}(f, \bg) \,\d\lambda \quad\text{for $t>0$}.
$$
Noticing that $(\lambda\bI+\CA)^{-1}(f, \bg) = (\theta, \bv)$, we have 
\begin{align*}
T_1(t)(f, \bg)& =\frac{1}{2\pi i}\int_{\Gamma + \gamma} e^{\lambda t}
\theta \,\d\lambda, \\
T_2(t)(f, \bg) & = \frac{1}{2\pi i}\int_{\Gamma + \gamma} e^{\lambda t}
\bv \,\d\lambda = \sum_{i=1}^2 \frac{1}{2\pi i}\int_{\Gamma + \gamma} e^{\lambda t}
\bv_i \,\d\lambda.
\end{align*}
Set 
$$T_{21}(t)\bg = \frac{1}{2\pi i}\int_{\Gamma + \gamma} e^{\lambda t}
\bv_1\,\d\lambda, \quad T_{22}(t)(f, \bg) = 
\frac{1}{2\pi i}\int_{\Gamma + \gamma} e^{\lambda t}
\bv_2\,\d\lambda.
$$
We first estimate $T_{21}(t)\bg$. 
By change of variable: $\lambda t= \ell$ and  by \eqref{6.21.1} and \eqref{6.21.2}, we have
\begin{equation}\label{6.21.3*}\begin{aligned}
\|\bar\nabla^2 T_{21}(t)\bg\|_{B^s_{q,1}(\HS)}
&\leq Ce^{\gamma t}t^{-1+\frac{\sigma}{2}}\bg\|_{B^{s+\sigma}_{q,1}(\HS)}, \\
\|\bar\nabla^2 T_{21}(t)(f, \bg)\|_{B^s_{q,1}(\HS)}
&\leq Ce^{\gamma t}t^{-1-\frac{\sigma}{2}}\|\bg\|_{B^{s-\sigma}_{q,1}(\HS)}.
\end{aligned}\end{equation}
In fact, noting that ${\rm Re}\, e^{\lambda t} = e^{t(\gamma + r\cos(\pi\pm\epsilon)}
= e^{\gamma t}e^{-tr\cos\epsilon}$ for $\lambda 
\in \Gamma_\pm + \gamma$, by \eqref{6.21.1} we have
\begin{align*}
\|\bar\nabla^2T_{21}(t)\bg\|_{B^s_{q,1}(\HS)}
& \leq Ce^{\gamma t}\int^\infty_0e^{-tr\cos\epsilon}
\|\bar\nabla^2\bv_1\|_{B^s_{q,1}(\HS)}\, \d r \\ 
& \leq Ce^{\gamma t}\int^\infty_0e^{-tr\cos\epsilon}r^{-\frac{\sigma}{2}}
\, \d r \, \|\bg\|_{B^{s+\sigma}_{q,1}(\HS)}
\\
& = Ce^{\gamma t}t^{-1+\frac{\sigma}{2}}
\int^\infty_0e^{-s\cos\epsilon}s^{-\frac{\sigma}{2}}
\, \d s \, \|\bg\|_{B^{s+\sigma}_{q,1}(\HS)}.
\end{align*}
Thus, we have the first inequality in \eqref{6.21.3*}. 
To prove the second inequality in \eqref{6.21.3*}, we write
$$\bar\nabla^2T_{21}(t)\bg
= -\frac{1}{2\pi i t}\int_{\Gamma+\gamma}
e^{\lambda t}\pd_\lambda (\bar\nabla^2\bv_1)\,d\lambda.
$$
And then, by \eqref{6.21.2}
\begin{align*}
\|\bar\nabla^2T_{21}(t)\bg\|_{B^s_{q,1}(\HS)}
& \leq Ct^{-1}e^{\gamma t}\int^\infty_0e^{-tr\cos\epsilon}
\|\bar\nabla^2\pd_\lambda \bv_1\|_{B^s_{q,1}(\HS)}\, \d r \\ 
& \leq Ct^{-1}e^{\gamma t}\int^\infty_0e^{-tr\cos\epsilon}r^{-1+\frac{\sigma}{2}}
\, \d r \, \|\bg\|_{\CH_{-\sigma}}
\\
& = Ce^{\gamma t}t^{-1-\frac{\sigma}{2}}
\int^\infty_0e^{-s\cos\epsilon}s^{-1+\frac{\sigma}{2}}
\, \d s \, \|(f, \bg)\|_{\CH_{-\sigma}}. 
\end{align*}
Thus, we have the second inequality of \eqref{6.21.3*}.
\par
Noting that $|\lambda| \geq \gamma \sin \epsilon$ for $\lambda \in \Sigma_{\mu} + \gamma$, 
by \eqref{6.21.3} and \eqref{6.21.4} we have 
\begin{align*}
\|(\theta, \bv_2)\|_{B^{s+1}_{q,1}(\HS) \times B^{s+2}_{q,1}(\HS)}
&\leq C|\lambda|^{-1}\|(f, \bg)\|_{\CH^s_{q,1}(\HS)}\\
&\leq C(\gamma\sin\epsilon)^{-(1-\frac{\sigma}{2})}|\lambda|^{-\frac{\sigma}{2}}
\|(f, \bg)\|_{\CH^s_{q,1}(\HS)}\\
\|(\pd_\lambda \theta, \pd_\lambda \bv_2)\|_{B^{s+1}_{q,1}(\HS) \times B^{s+2}_{q,1}(\HS)}
&\leq C|\lambda|^{-2}\|(f, \bg)\|_{\CH^s_{q,1}(\HS)} \\
&\leq C(\gamma\sin\epsilon)^{-(1+\frac{\sigma}{2})}
|\lambda|^{-(1-\frac{\sigma}{2})}\|(f, \bg)\|_{\CH^s_{q,1}(\HS)} 
\end{align*}
for $\lambda \in \Sigma_\mu + \gamma$.
Employing the same arguments as in the proof of \eqref{6.21.3*}, we have
\begin{equation}\begin{aligned}\label{6.21.4*}
\|T_1(t)(f, \bg)\|_{B^{s+1}_{q,1}(\HS)} &\leq Ce^{\gamma t}t^{-1+\frac{\sigma}{2}}
\|(f, \bg)\|_{\CH^s_{q,1}(\HS)},  \\
\|T_1(t)(f, \bg)\|_{B^{s+1}_{q,1}(\HS)} &\leq Ce^{\gamma t}t^{-1-\frac{\sigma}{2}}
\|(f, \bg)\|_{\CH^s_{q,1}(\HS)}, \\
\|T_{22}(t)(f, \bg)\|_{B^{s+2}_{q,1}(\HS)} &\leq Ce^{\gamma t}t^{-1+\frac{\sigma}{2}}
\|(f, \bg)\|_{\CH^s_{q,1}(\HS)},  \\
\|T_{22}(t)(f, \bg)\|_{B^{s+2}_{q,1}(\HS)} &\leq Ce^{\gamma t}t^{-1-\frac{\sigma}{2}}
\|(f, \bg)\|_{\CH^s_{q,1}(\HS)}, 
\end{aligned}\end{equation}
From \eqref{6.21.3*} and \eqref{6.21.4*}, we have
\begin{equation}\label{6.21.5}\begin{aligned}
\int^\infty_0 e^{-\gamma t}\|T_{21}(t)\bg\|_{B^{s+2}_{q,1}(\HS)}\,\d t
\leq C\|\bg\|_{B^s_{q,1}(\HS)}, \\
\int^\infty_0 e^{-\gamma t}\|T_{1}(t)(f, \bg)\|_{B^{s+1}_{q,1}(\HS)}\,\d t
\leq C\|(f, \bg)\|_{\CH^s_{q,1}(\HS)}, \\
\int^\infty_0 e^{-\gamma t}\|T_{22}(t)(f, \bg)\|_{B^{s+2}_{q,1}(\HS)}\,\d t
\leq C\|(f, \bg)\|_{\CH^s_{q,1}(\HS)}.
\end{aligned}\end{equation}
In fact, we write 
\begin{align*}
&\int^\infty_0 e^{-\gamma t}\|\bar\nabla^2T_{21}(t)\bg\|_{B^{s+2}_{q,1}(\HS)}\,\d t \\
&\quad = \sum_{j \in \BZ} \int^{2^{(j+1)}}_{2^j}e^{-\gamma t}
\|\bar\nabla^2T_{21}(t)\bg\|_{B^{s+2}_{q,1}(\HS)}\, \d t\\
& \quad\leq \sum_{j \in \BZ} \int^{2^{(j+1)}}_{2^j}
\sup_{t \in (2^j, 2^{j+1})}(e^{-\gamma t}
\|\bar\nabla^2T_{21}(t)\bg\|_{B^{s+2}_{q,1}(\HS)})\,\d t
\\
& \quad =\sum_{j \in \BZ} 2^j
\sup_{t \in (2^j, 2^{j+1})}(e^{-\gamma t}\|\bar\nabla^2T_{21}(t)\bg\|_{B^{s+2}_{q,1}(\HS)}).
\end{align*}
Setting $a_j = \sup_{t \in (2^j, 2^{j+1})}e^{-\gamma t}
\|\bar\nabla^2T_{21}(t)\bg\|_{B^{s+2}_{q,1}(\HS)}$, 
we have
$$
\int^\infty_0 e^{-\gamma t}\|\bar\nabla^2T_{21}(t)\bg\|_{B^{s+2}_{q,1}(\HS)}\,\d t
\leq 2((2^ja_j))_{\ell_1} = 2((a_j)_{j \in \BZ})_{\ell^1_1}. 
$$
Here and in the following, 
$\ell^s_q$ denotes the set of all sequences $(2^{js}a_j)_{j \in \BZ}$ such that 
\begin{align*}
\|((a_j)_{j \in \BZ})\|_{\ell^s_q} &= \Bigl\{\sum_{j \in \BZ} 
(2^{js}|a_j|)^q \Bigr\}^{1/q} < \infty \quad\text{for $1 \leq q < \infty$},  \\
\|((a_j)_{j \in \BZ})\|_{\ell^s_\infty} &= \sup_{j \in \BZ} 
2^{js}|a_j| < \infty \quad \text{for  $q = \infty$}.
\end{align*}
By \eqref{6.21.3*}, we have
$$\sup_{j \in \BZ} 2^{j(1-\frac{\sigma}{2})}a_j \leq C\|\bg\|_{B^{s+\sigma}_{q,1}(\HS)}, 
\quad\sup_{j \in \BZ} 2^{j(1+\frac{\sigma}{2})}a_j \leq C\|\bg\|_{B^{s-\sigma}_{q,1}(\HS)}
$$
Namely, we have
$$\|(a_j)\|_{\ell_\infty^{1-\frac{\sigma}{2}}} \leq C\|\bg\|_{B^{s+\sigma}_{q,1}(\HS)},
\quad \|(a_j)\|_{\ell_\infty^{1+ \frac{\sigma}{2}}} \leq C\|\bg\|_{B^{s-\sigma}_{q,1}(\HS)}.
$$
According to \cite[5.6.1.Theorem]{BL},  
we know that $\ell^1_1 = (\ell^{1-\frac{\sigma}{2}}_\infty,
\ell^{1+\frac{\sigma}{2}}_\infty)_{1/2, 1}$, 
where
$(\cdot, \cdot)_{\theta, q}$ denotes the real interpolation functor, and therefore we have 
\begin{equation}\label{L1est:1}
\int^\infty_0 e^{-\gamma t}\|T_{21}(t)\bg\|_{B^{s+2}_{q,1}(\HS)}\,\d t
\leq C\|\bg\|_{(B^{s+\sigma}_{q,1}(\HS), B^{s-\sigma}_{q,1}(\HS))_{1/2, 1}}
\leq C\|(f, \bg)\|_{B^{s}_{q,1}(\HS)(\HS)}.
\end{equation}
for any $\bg \in C^\infty_0(\HS)$. But, $C^\infty_0(\HS)$ is dense in
$B^s_{q,1}(\HS)$, so the estimate \eqref{L1est:1} holds for any $\bg \in B^s_{q,1}(\HS).$
\par 
Employing completely the same argument, by \eqref{6.21.4*} we have
\begin{equation}\label{L1est:2}\begin{aligned}
\int^\infty_0 e^{-\gamma t}\|T_1(t)(f, \bg)\|_{B^{s+1}_{q,1}(\HS)}\,\d t
&\leq C\|(f, \bg)\|_{\CH^s_{q,1}(\HS)}, \\
\int^\infty_0 e^{-\gamma t}\|T_{22}(t)(f, \bg)\|_{B^{s+2}_{q,1}(\HS)}\,\d t
&\leq C\|(f, \bg)\|_{\CH^s_{q,1}(\HS)}
\end{aligned}\end{equation}
for any $f \in B^{s+1}_{q,1}(\HS)$ and $\bg \in C^\infty_0(\HS)$.  
But, $C^\infty_0(\HS)$ is dense in
$B^s_{q,1}(\HS)$, so the estimate \eqref{L1est:2} holds for any $(f, \bg) \in \CH^s_{q,1}(\HS)$.
\par

By equations \eqref{semi:1} with $F=0$ and $\bG=0$, we have
\begin{align*}
\pd_t T_1(t)(f, \bg) &= -\eta_0(x)\dv T_{2}(t)(f, \bg), \\
\pd_tT_{2}(t)(f, \bg) & =(\eta_0)^{-1}( \alpha\Delta T_{2}(t)(f, \bg) 
+ \beta\nabla\dv T_{2}(t)(f, \bg)
 - \nabla(P'(\eta_0)T_1(t)(f\, \bg)).
\end{align*} 
Recalling that $T_2(t)(f, \bg) = T_{21}(t)\bg + T_{22}(t)(f, \bg)$,
by \eqref{L1est:1} and \eqref{L1est:2},  we have 
\begin{align*}
\int^\infty_0 e^{-\gamma t}
\|\pd_tT_1(t)(f,\bg)\|_{B^{s+1}_{q,1}(\HS)}\,\d t
&\leq C(\rho_*+\|\tilde\eta_0\|_{B^{s+1}_{q,1}(\HS)})
\int^\infty_0 e^{-\gamma t}
\|\dv T_2(t)(f, \bg)\|_{B^{s+1}_{q,1}(\HS)}\,\d t \\
& \leq C(\rho_*+\|\tilde\eta_0\|_{B^{s+1}_{q,1}(\HS)})
\|(f, \bg)\|_{\CH^s_{q,1}(\HS)}, \\
\int^\infty_0 e^{-\gamma t}\|\pd_t T_2(t)(f, \bg)\|_{B^{s}_{q,1}(\HS)}\,\d t
& \leq  C(\rho_*, \|\tilde\eta_0\|_{B^{N/q}_{q,1}})
\Bigl\{\int^\infty_0\|\nabla^2 T_2(t)(f, \bg)\|_{B^s_{q,1}(\HS)} \\
&\quad 
+ C(\rho_*, \|\tilde\eta_0\|_{B^{s+1}_{q,1}(\HS)})
\int^\infty_0 \|T_1(t)(f, \bg)\|_{B^{s+1}_{q,1}(\HS)}\,\d t\Bigr\} \\
& \leq C(\rho_*, \|\tilde\eta_0\|_{B^{s+1}_{q,1}(\HS)})
\|(f, \bg)\|_{\CH^s_{q,1}(\HS)}.
\end{align*}
This completes the proof of Theorem \ref{thm:t.2}. \end{proof}

\begin{cor}\label{semi.1}
Let $1 < q < \infty$, $-1 + N/q \leq s , 1/q$,  and $T > 0$. 
  Let $\eta_0(x)=\rho_* + \tilde\eta_0(x)$ be a 
function given in Theorem \ref{thm:1}.  Then, for any 
$(f, \bg) \in \CH$, $F\in L_1((0, T),  B^{s+1}_{q,1}(\HS))$ and 
$\bG \in L_1((0, T), B^s_{q,1}(\HS)^N)$, problem \eqref{semi:1} admits unique
solutions $\rho$ and $\bu$ with
$$\Pi \in W^1_1((0, T), B^{s+1}_{q,1}(\HS)), \quad
\bU \in L_1((0, T), B^{s+2}_{q,1}(\HS)^N) \cap W^1_1((0, T), B^s_{q,1}(\HS)^N).
$$
Moreover,  there exist constant constants  $\gamma>0$ and $C$ 
depending on $\rho_*$ and 
 $\|\tilde\eta_0\|_{B^{s+1}_{q,1}(\HS)}$ 
such that $\rho$ and $\bu$ satisfy the following maximal
$L_1$-$\CH^s_{q,1}(\HS)$ estimate:
\begin{align*}
\|(\pd_t, \bar\nabla^2)\bU\|_{L_1((0, T), B^s_{q,1}(\HS))}
+ \|(1, \pd_t)\Pi\|_{L_1((0, T), B^{s+1}_{q,1}(\HS))} 
\leq Ce^{\gamma T}(\|(f, \bg)\|_{\CH}
+ \|(F, \bG)\|_{L_1((0, T), \CH)}).
\end{align*}
\end{cor}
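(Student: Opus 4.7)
The plan is to deduce Corollary \ref{semi.1} from Theorem \ref{thm:t.2} via the Duhamel representation \eqref{sol:1}, which writes the solution as
\[
(\Pi, \bU)(t) = T(t)(f, \bg) + \int_0^t T(t-\tau)\bigl(F(\cdot,\tau),\, \eta_0(\cdot)^{-1}\bG(\cdot,\tau)\bigr)\, \d\tau.
\]
Before addressing either term, I would first check that multiplication by $\eta_0^{-1}$ is continuous on $B^s_{q,1}(\HS)$. Writing $\eta_0^{-1} = \rho_*^{-1} - \tilde\eta_0 (\rho_*(\rho_*+\tilde\eta_0))^{-1}$ and invoking Lemmas \ref{lem:APH} and \ref{lem:Hasp} together with the strict lower bound $\eta_0 \geq \rho_1>0$ from \eqref{assump:0}, one obtains $\|\eta_0^{-1}\bG\|_{B^s_{q,1}(\HS)} \leq C(\rho_*, \|\tilde\eta_0\|_{B^{s+1}_{q,1}(\HS)})\|\bG\|_{B^s_{q,1}(\HS)}$. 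Hence the effective source $\bh(\tau) = (F(\cdot,\tau),\, \eta_0^{-1}\bG(\cdot,\tau))$ lies in $L_1((0,T), \CH^s_{q,1}(\HS))$ with norm controlled by $\|(F,\bG)\|_{L_1((0,T), \CH^s_{q,1}(\HS))}$, and it suffices to prove the target estimate with $\bh$ in place of $(F, \eta_0^{-1}\bG)$.

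For the homogeneous contribution $T(t)(f, \bg)$, Theorem \ref{thm:t.2} directly yields
\[
\int_0^\infty e^{-\gamma t}\bigl(\|(\pd_t, \bar\nabla^2)T_2(t)(f, \bg)\|_{B^s_{q,1}(\HS)} + \|(1, \pd_t)T_1(t)(f, \bg)\|_{B^{s+1}_{q,1}(\HS)}\bigr)\,\d t \leq C\|(f, \bg)\|_{\CH^s_{q,1}(\HS)},
\]
and restricting the integration to $(0,T)$ via the trivial pointwise bound $1 \leq e^{\gamma T}e^{-\gamma t}$ on that interval gives the homogeneous portion of the required inequality with constant $Ce^{\gamma T}$.

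The main step is handling the convolution $V(t) = \int_0^t T(t-\tau)\bh(\tau)\,\d\tau$, which I extend by zero outside $(0,T)$. Differentiating yields $\pd_t V(t) = \bh(t) + \int_0^t \pd_t T(t-\tau)\bh(\tau)\,\d\tau$, so the task reduces to controlling the convolution. By Fubini and the change of variables $s = t-\tau$,
\[
\int_0^\infty e^{-\gamma t}\Bigl\|\int_0^t \pd_t T(t-\tau)\bh(\tau)\,\d\tau\Bigr\|_{\CH^s_{q,1}(\HS)}\, \d t \leq \int_0^\infty e^{-\gamma\tau}\Bigl(\int_0^\infty e^{-\gamma s}\|\pd_s T(s)\bh(\tau)\|_{\CH^s_{q,1}(\HS)}\,\d s\Bigr)\, \d\tau,
\]
and the inner integral is bounded by $C\|\bh(\tau)\|_{\CH^s_{q,1}(\HS)}$ by Theorem \ref{thm:t.2} applied with $\bh(\tau)$ as initial data. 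The identical Fubini device, with $\pd_t T_2$ replaced by $\bar\nabla^2 T_2$, $T_1$, or $\pd_t T_1$, transfers each remaining component of the semigroup estimate to the convolution. Restricting the resulting weighted $\BR_+$ bound to $(0,T)$ and paying $e^{\gamma T}$ completes the estimate for $V$.

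Uniqueness follows from the fact that any two solutions differ by a mild solution of the homogeneous problem with zero initial data, which must vanish because $\CA$ generates a $C_0$-analytic semigroup on $\CH^s_{q,1}(\HS)$. The only genuinely delicate point is justifying the Fubini exchange together with the pointwise use of the semigroup estimate, which requires measurability of $\tau \mapsto \bh(\tau) \in \CH^s_{q,1}(\HS)$ and standard Bochner approximation by simple functions; no new technical obstacle arises beyond the careful bookkeeping of the four quantities $\pd_t \bU$, $\bar\nabla^2 \bU$, $\Pi$, $\pd_t \Pi$ appearing in the target norm.
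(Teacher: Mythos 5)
Your proposal is correct and follows the same broad strategy as the paper's proof: it writes the solution via the Duhamel representation \eqref{sol:1}, verifies (as the paper does just above \eqref{sol:1}) that multiplication by $\eta_0^{-1}$ is bounded on $B^s_{q,1}(\HS)$, handles the homogeneous term by Theorem \ref{thm:t.2} plus the elementary factor $e^{\gamma T}$, and controls the convolution via Fubini and Theorem \ref{thm:t.2}.

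The one tactical point where you diverge is the treatment of the time derivatives. You differentiate the Duhamel integral to write $\pd_t V(t)=\bh(t)+\int_0^t\pd_t T(t-\tau)\bh(\tau)\,\d\tau$ and then run the same Fubini device on $\pd_t T$, whereas the paper first obtains $\Pi\in L_1((0,T),B^{s+1}_{q,1})$ and $\bU\in L_1((0,T),B^{s+2}_{q,1})$ by Fubini and then reads $\pd_t\Pi$ and $\pd_t\bU$ directly off the equations, namely $\pd_t\Pi=-\eta_0\dv\bU+F$ and $\pd_t\bU=\eta_0^{-1}(\alpha\Delta\bU+\beta\nabla\dv\bU-\nabla(P'(\eta_0)\Pi)+\bG)$, using Lemmas \ref{lem:APH} and \ref{lem:Hasp} to bound the multipliers. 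Both routes reach the same estimate. The paper's variant is a little cleaner in that it sidesteps justifying differentiation under the convolution integral: the kernel $\pd_t T(s)$ has an $s^{-1}$ operator-norm singularity, so your identity $\pd_t V=\bh+\int_0^t\pd_tT(t-\tau)\bh(\tau)\,\d\tau$ for merely $L_1$-in-time data $\bh$ requires a density argument (approximate $\bh$ by simple functions with values in $\CD^s_{q,1}$, use the a priori $L_1$ bound to pass to the limit), which you gesture at but do not carry out. In exchange, your variant treats all four target quantities uniformly by a single Fubini argument, without invoking the PDE a second time.
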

\begin{proof} 
Let $F_0$ and $\bG_0$ be zero extension of $F$ and $\bG$ outside of $(0, T)$
interval.  Using $\{T(t)\}_{t\geq 0}$, we can write
$$(\Pi, \bU)(t) = T(t)(f, \bg) + \int^t_0T(t-s)(F_0(\cdot, s), \rho_0^{-1}(\cdot)\bG_0(\cdot, s))\,ds.
$$
Let $\gamma$ and $C$ be the constant given in Theorem \ref{thm:t.2}. 
By Fubini's theorem, we have
\begin{align*}
&\int^\infty_0e^{-\gamma t}\|\bar\nabla^2
\int^t_0T_2(t-\ell)(F_0, \eta_0^{-1}\bG_0)(\ell)\,\d\ell\|_{B^s_{q,1}(\HS)}\, \d t \\
&\quad \leq \int^\infty_0\Bigl\{ \int^\infty_\ell e^{-\gamma t}
\|\bar\nabla^2T_2(t-\ell)(F_0, \eta_0^{-1}\bG_0)(\ell)\|_{B^s_{q,1}(\HS)}\,\d t\Bigr\}\,\d \ell \\
&\quad = \int^\infty_0e^{-\gamma \ell}
\Bigl\{ \int^\infty_0 e^{-\gamma t}
\|\bar\nabla^2 T_2(t)(F_0, \eta_0^{-1}\bG_0)(\ell)\|_{B^{s}_{q,1}(\HS)}\,\d t\Bigr\}\,\d \ell \\
&\quad \leq C\int^\infty_0 e^{-\gamma \ell} \|(F_0(\cdot, \ell), 
\eta_0^{-1}\bG_0(\cdot, \ell)\|_{\CH^s_{q,1}(\HS)}\,\d \ell \\
&\quad \leq C\|(F, \bG)\|_{L_1((0, T), \CH^s_{q,1}(\HS))}.
\end{align*}
Employing completely the same argument, we have
$$\int^\infty_0e^{-\gamma t}\|
\int^t_0T_1(t-\ell)(F_0, \rho^{-1}_0\bG_0)(\ell)\,\d \ell\|_{B^{s+1}_{q,1}(\HS)}\, \d t 
\leq C\|(F, \bG)\|_{L_1((0, T), \CH^s_{q,1}(\HS))}.
$$
Therefore, we have
$$\int^\infty_0 e^{-\gamma t}(\|\rho(\cdot, t)\|_{B^{s+1}_{q,1}(\HS)}
+ \|\bu(\cdot, t)\|_{B^{s+2}_{q,1}(\HS)})\,dt
\leq C(\|(f, \bg)\|_{\CH} + \|(F, \bG)\|_{L_1((0, T), \CH)}), 
$$
which implies that
$$e^{-\gamma T}\int^T_0(\|\rho(\cdot, t)\|_{B^{s+1}_{q,1}(\HS)}
+ \|\bu(\cdot, t)\|_{B^{s+2}_{q,1}(\HS)})\,dt
\leq C(\|(f, \bg)\|_{\CH} + \|(F,  \bG)\|_{L_1((0, T), \CH)}).
$$
Therefore, we have
$$\int^T_0(\|\rho(\cdot, t)\|_{B^{s+1}_{q,1}(\HS)}
+ \|\bu(\cdot, t)\|_{B^{s+2}_{q,1}(\HS)})\,dt
\leq Ce^{\gamma T}(\|(f, \bg)\|_{\CH} + \|(F, \bG)\|_{L_1((0, T), \CH)}).
$$
Here, $\gamma$ and $C$ are  constants depending on $\rho_*$, 
$\|\tilde\eta_0\|_{B^{s+1}_{q,1}(\HS)}$. \par 
To show the estimate of time derivatives, we use the relations:
\begin{align*}
\pd_t\Pi& = -\eta_0\dv\bu + F, \\
\pd_t\bU& =(\eta_0)^{-1}(\alpha\Delta\bU + \beta\nabla\dv\bU - 
\nabla(P(\eta_0)\Pi)+ \bG),
\end{align*}
and then, 
\begin{align*}
&\int^T_0(\|\pd_t\Pi(\cdot, t)\|_{B^{s+1}_{q,1}(\HS)}
+ \|\pd_t\bU(\cdot, t)\|_{B^{s}_{q,1}(\HS)})\,dt \\
&\leq C(\rho_*, \|\tilde\eta_0\|_{B^{s+1}_{q,1}})(
\int^T_0(\|\Pi(\cdot, t)\|_{B^{s+1}_{q,1}(\HS)}
+ \|\bU(\cdot, t)\|_{B^{s+2}_{q,1}(\HS)})\,dt
+ \|(F, \bG)\|_{L_1((0, T), \CH^s_{q,1}(\HS))}) \\
& \leq C(\rho_*, \|\tilde\eta_0\|_{B^{s+1}_{q,1}})e^{\gamma T}
(\|(f, \bg)\|_{\CH^s_{q,1}(\HS)} + \|(F, \bG)\|_{L_1((0, T), \CH^s_{q,1}(\HS))}).
\end{align*}
This completes the proof of  Corollary \ref{semi.1}.

\end{proof}
%%%%%%%%%%%%%%%%%%%%

%%%%%%%%%%% 以下はnonlinear %%%%%%%%%5

\section{A proof of Theorem \ref{thm:2}}

In this section, we shall prove Theorem \ref{thm:2}. Let $\eta_0(x) = \rho_* + \tilde\eta_0(x)$
with $\tilde\eta_0(x) \in B^{s+1}_{q,1}(\HS)$ and assume that $\eta_0(x)$ satisfy the assumption
\eqref{assump:0}.  Let $\rho_0(x) = \rho_* + \tilde\rho_0(x)$ with $\tilde\rho_0(x) \in 
B^{s+1}_{q,1}(\HS)$.  Let $\omega>0$ be a small number determined late and assume that 
\begin{equation}\label{assump:1}
\|\tilde\rho_0-\tilde\eta_0\|_{B^{s+1}_{q,1}(\HS)} < \omega
\end{equation}
Let $\bu_0 \in B^s_{q,1}(\HS)^N$. We consider equations \eqref{ns:2}. 
By setting $\rho = \rho_0 + \theta$  we write equations \eqref{ns:2} as follows: 
\begin{equation}\label{appro:ns.1}\left\{\begin{aligned}
\pd_t\theta+\eta_0\dv\bu = (\eta_0-\rho_0-\theta)\dv\bu 
 + F(\theta+\rho_0, \bu)& 
&\quad&\text{in $\HS\times(0, T)$}, \\
\eta_0 \pd_t\bu - \alpha\Delta \bu  -\beta\nabla\dv\bu 
+  \nabla (P'(\eta_0)\theta)
 = -\nabla P(\rho_0) + \bG(\theta+\rho_0, \bu) + \tilde\bG(\theta, \bu) && \quad&\text{in $\HS\times(0, T)$}, \\
\bu|_{\pd\HS} =0, \quad (\theta, \bu)|_{t=0} = (0, \bu_0)&
& \quad&\text{in $\HS$},
\end{aligned}\right.\end{equation}
where we have set $\tilde\bG(\theta, \bu) 
 = (\eta_0 - \rho_0-\theta)\pd_t\bu  -  \nabla( P(\rho_0+\theta)-P(\rho_0)
-P'(\eta_0)\theta)$. 

To prove Theorem \ref{thm:2}, we use  the Banach contraction
mapping principle. To this end, we introduce an energy functional
$E_T$ and an underlying space $S_{T, \omega}$ defined by 
\begin{align*}
E_T&(\eta, \bw)  = \|(\eta, \pd_t\eta)\|_{L_1((0, T), B^{s+1}_{q,1}(\HS))}
+ \|\bw\|_{L_1((0, T), B^{s+2}_{q,1}(\HS))}
+ \|\pd_t\bw\|_{L_1((0, T), B^s_{q,1}(\HS))}, \\[0.5pc]
S_{T, \omega}  &= \left\{ 
(\eta, \bw) \, \left | 
\begin{aligned} \eta &\in W^1_1((0, T), B^{s+1}_{q,1}(\HS)), \enskip 
\bw \in L_1((0, T), B^{s+2}_{q,1}(\HS)^N)
\cap W^1_1((0, T), B^s_{q,1}(\HS)^N) \\
(\eta, &\bw)|_{t=0} = (0, \bu_0), \quad E_T(\eta, \bw) \leq \omega,
\quad \int^T_0\|\nabla \bw(\cdot, \tau)\|_{B^{N/q}_{q,1}(\HS)}\,\d\tau \leq c_1
\end{aligned}
\right.\right\}.
\end{align*}
Here, $T>0$, $\omega>0$ and $c_1>0$  are small constants 
chosen later.  In particular, $c_1$ is chosen in such a way that
$$\Bigl\|\int^T_0\nabla\bw(\cdot, \tau)\,\d\tau\Bigr\|_{L_\infty(\HS)}
\leq C\int^T_0\|\nabla \bw(\cdot, \tau)\|_{B^{N/q}_{q,1}(\HS)} \leq
Cc_1 \leq c_0,
$$
where $c_0$ is a constant appearing in \eqref{assump:2}.  Thus, the constant 
$c_1$ guarantees that the Lagrange map $y=X_\bw(x, t)$ is $C^1$ diffeomorphism
from $\Omega$ onto $\Omega$.
\par
Given $(\theta, \bu) \in S_{T, \omega}$, let $\eta$ and $\bw$ be solutions
to the system of linear equations:
\begin{equation}\label{st:2}\left\{\begin{aligned}
\pd_t\eta+\eta_0\dv\bw =
(\eta_0 - \rho_0-\theta)\dv\bu + F(\rho_0+\theta, \bu)& 
&\quad&\text{in $\HS\times(0, T)$}, \\
\eta_0\pd_t\bw - \alpha\Delta \bw  -\beta\nabla\dv\bw
+  \nabla(P'(\eta_0) \eta) = -\nabla P(\rho_0) +\bG(\rho_0+\theta, \bu)
+\tilde\bG(\theta, \bu)
& &\quad&\text{in $\HS\times(0, T)$}, \\
\bw|_{\pd\HS} =0, \quad (\eta, \bw)|_{t=0} = (0, \bu_0)
& &\quad&\text{in $\HS$}.
\end{aligned}\right.\end{equation}
Let $\eta_\ba$ and $\bw_\ba$ be solutions of the system of linear equations:
\begin{equation}\label{st:3}\left\{\begin{aligned}
\pd_t\eta_\ba+\eta_0\dv\bw_\ba =0&
&\quad&\text{in $\HS\times(0, \infty)$}, \\
\eta_0\pd_t\bw_\ba - \alpha\Delta \bw_\ba   -\beta\nabla\dv\bw_\ba
+  \nabla(P'(\eta_0) \eta_\ba) = -\nabla P(\rho_0)
& &\quad&\text{in $\HS\times(0, \infty)$}, \\
\bw_\ba|_{\pd\HS} =0, \quad (\eta_\ba, \bw_\ba)|_{t=0} = (0, \bu_0)
& &\quad&\text{in $\HS$}.
\end{aligned}\right.\end{equation}
We will choose $T>0$ small enough later, and so for a while we assume that
$0 < T < 1$. 
By Corollary \ref{semi.1}, we know the unique existence 
of solutions $\eta_\ba$ and $\bw_\ba$ satisfying the regularity conditions:
$$\eta_\ba \in W^1_1((0, 1), B^{s+1}_{q,1}(\HS)), 
\quad 
\bw_\ba \in L_1((0, 1), B^{s+2}_{q,1}(\HS)^N)
\cap W^1_1((0, 1), B^s_{q,1}(\HS)^N)
$$ 
as well as the estimates:
\begin{equation}\label{est:2}\begin{aligned}
&\|(\eta_\ba, \pd_t\eta_\ba)\|_{L_1((0, 1), B^{s+1}_{q,1}(\HS))}
+ \|(\pd_t, \bar\nabla^2)\bw_\ba\|_{L_1((0, 1), B^s_{q,1}(\HS))} 
\\
&\quad \leq Ce^{\gamma}(\|\bu_0\|_{B^s_{q,1}(\HS)} + \|\nabla P(\rho_0)\|_{B^s_{q,1}(\HS)}).
\end{aligned}\end{equation}
Here, $\gamma$ is a constant depending on $\rho_*$, 
 $\|\tilde\eta_0\|_{B^{s+1}_{q,1}(\HS)}$  
given in Corollary \ref{semi.1}. Here and in the following, $C$ denotes a  general constant
depending at most on $\rho_*$ and $\|\tilde\eta_0\|_{B^{s+1}_{q,1}(\HS)}$, 
which is changed from line to line,  
but independent of $\omega$ and $T$.   \par
In view of \eqref{est:2},  $\eta_\ba$ and $\bw_\ba$ satisfy $E_1(\eta_\ba, \bw_\ba) < \infty$, 
 and so we choose $T \in (0, 1)$ small enough in such a way that
\begin{equation}\label{est:5}
E_T(\eta_\ba, \bw_\ba) \leq \omega/2.
\end{equation}
Let $\rho$ and $\bv$ be solutions to the system of linear equations:
\begin{equation}\label{st:40}\left\{\begin{aligned}
\pd_t\rho+\eta_0\dv\bv =(\eta_0-\rho_0 -\theta))\dv\bu +
F(\theta+\rho_0, \bu)& 
&\quad&\text{in $\HS\times(0, T)$}, \\
\eta_0\pd_t\bv- \alpha\Delta \bv-\beta\nabla\dv\bv
+  \nabla(P'(\eta_0) \rho) = \bG(\theta+\rho_0, \bu) + \tilde\bG(\theta, \bu)
& &\quad&\text{in $\HS\times(0, T)$}, \\
\bv|_{\pd\HS} =0, \quad (\rho, \bv)|_{t=0} = (0, 0)
& &\quad&\text{in $\HS$}.
\end{aligned}\right.\end{equation}
Applying Corollary \ref{semi.1}, 
we see the existence of solutions $\rho$ and $\bv$ of equations
\eqref{st:40} satisfying the regularity condition:
$$\rho \in W^1_1((0, T), B^{s+1}_{q,1}(\HS)), 
\quad
\bv \in L_1((0, T), B^{s+2}_{q,1}(\HS)^N) \cap W^1_1((0, T), B^{s}_{q,1}(\HS)^N)
$$
as well as the estimate:
\begin{equation}\label{est:3}\begin{aligned}
E_T(\rho, \bv) \leq Ce^{\gamma T}(&\|(\eta_0-\rho_0 - \theta)\dv\bu, 
F(\theta+\rho_0, \bu)\|_{L_1((0, T), B^{s+1}_{q,1}(\HS))} \\
&+ \|(\bG(\theta+\rho_0, \bu), \tilde\bG(\theta, \bu)) 
\|_{L_1((0, T), B^s_{q,1}(\HS))}).
\end{aligned}\end{equation}
Here, we notice that $\gamma$ and  $C$ depend on $\rho_*$ and $\|\tilde\eta_0\|_{B^{s+1}_{q,1}(\HS)}$
but is independent of $\omega$ and $T$.
\par

Now, we  shall show that there exist  constants $C>0$ and $\omega >0$ such that 
\begin{equation}\label{est:4}\begin{aligned}
\|(\eta_0-\rho_0-\theta)\dv\bu,
F(\theta+\rho_0, \bu)\|_{L_1((0, T), B^{s+1}_{q,1}(\HS))}
&+ \|(\bG(\theta+\rho_0, \bu), \tilde\bG(\theta, \bu)) 
\|_{L_1((0, T), B^s_{q,1}(\HS))}  \\
&\leq C(\omega^2 + \omega^3).
\end{aligned}\end{equation}
If we show \eqref{est:4}, then by \eqref{est:3} we have
\begin{equation}\label{est:7}
E_T(\rho, \bv) \leq Ce^{\gamma T}(\omega^2 + \omega^3).
\end{equation}
Choose  $\omega>0$ and $T>0$  so small that $Ce(\omega + \omega^2) \leq 1/2$ 
and  $\gamma T\leq 1$.  Then, 
we have
\begin{equation}\label{est:6}
E_T(\rho, \bv) < \omega/2,
\end{equation}
which, combined with \eqref{est:5}, implies that 
$\eta = \eta_\ba +  \rho$ and $\bw =\bw_\ba + \bv$ satisfy
equations \eqref{st:2} and $E_T(\eta, \bw) < \omega$.
Especially, $\omega$ is chosen so small that 
$$\int^T_0\|\nabla\bw(\cdot, \tau)\|_{B^{N/q}_{q,1}(\HS)}\,\d\tau 
\leq CE_T(\eta, \bw) \leq C\omega \leq c_1.$$
As a consequence, $(\eta, \bw) \in S_{T, \omega}$. 
Thus,  if we define the map $\Phi$ by $\Phi(\theta, \bu) = (\eta, \bw)$,
then  $\Phi$ maps $S_{T, \omega}$ into $S_{T, \omega}$. \par

Now, we shall show \eqref{est:4}.  For notational simplicity, we omit $\HS$ below. 
Notice that $B^{N/q}_{q,1}$ is a Banach algebra (cf. \cite[Proposition 2.3]{H11}). 
By Lemma \ref{lem:APH} and 
the assumption: $N/q\leq  s+1$, we see that $B^{s+1}_{q,1}$ is also a Banach
algebra.  In fact, 
$$\|uv\|_{B^{s+1}_{q,1}} \leq \|(\nabla u)v\|_{B^s_{q,1}} + \|u\bar\nabla v\|_{B^s_{q,1}}
\leq C(\|\nabla u\|_{B^s_{q,1}}\|v\|_{B^{N/q}_{q,1}} +
\|u\|_{B^{N/q}_{q,1}}\|\bar\nabla v\|_{B^s_{q,1}})
\leq C\|u\|_{B^{s+1}_{q,1}}\|v\|_{B^{s+1}_{q,1}}.
$$
\par
We first estimate $(\eta_0-\rho_0-\theta)\dv\bu$ and 
$F(\theta+\rho_0, \bu)$. 
By Lemma \ref{lem:APH} and \eqref{assump:1},  we have
\begin{equation}\label{nonest:1}
\|(\eta_0-\rho_0)\dv\bu\|_{B^{s+1}_{q,1}} \leq C\omega\|\bu\|_{B^{s+2}_{q,1}}.
\end{equation}
Since $B^{s+1}_{q,1}$ is a Banach algebral, we have
$$\|\theta\dv\bu\|_{B^{s+1}_{q,1}} \leq C\|\theta\|_{B^{s+1}_{q,1}}\|\dv\bu\|_{B^{s+1}_{q,1}}.$$
Since $\theta|_{t=0} = 0$, here and  in the sequel we use the following estimate:
\begin{equation}\label{theta:1}
\|\theta(\cdot, t)\|_{B^{s+1}_{q,1}} = \Bigl\|\int^t_0\pd_s\theta(\cdot, s)\,\d s\Bigr\|_{B^{s+1}_{q,1}(\HS)}
\leq \|\pd_t\theta\|_{L_1((0, T), B^{s+1}_{q,1}(\HS)}. 
\end{equation}
Thus, we have
$$\|\theta\dv\bu\|_{L_1((0, T), B^{s+1}_{q,1})} \leq C \|\pd_t\theta\|_{L_1((0, T), B^{s+1}_{q,1})}
\|\bu\|_{L_1((0, T), B^{s+2}_{q,1})}.
$$
We next estimate $F(\rho_0+\theta, \bu) 
= (\rho_0+\theta)((\BI - \BA_\bu):\nabla\bu)$. 
Recall that $\bu$ satisfies
\begin{equation}\label{defin:1}
\int^T_0\|\nabla\bu(\cdot, \tau)\|_{B^{N/\beta}_{q,1}}\, \d\tau \leq c_1.
\end{equation}
Since 
$B^{N/q}_{q,1} \subset L_\infty$, we have
\begin{equation}\label{defin:2}
\sup_{t \in (0, T)}\,\Bigl\|\int^t_0 \nabla \bu(\cdot, \tau)\,\d\tau\Bigr\|_{L_\infty}
\leq C\int^T_0\|\nabla \bu(\cdot, \tau)\|_{B^{N/\beta}_{q,1}}\,\d\tau 
\leq Cc_1.
\end{equation}
Choosing $c_1$ so small that $Cc_1 < 1$. 
Let $F(\ell)$ be a $C^\infty$ function defined on $|\ell| \leq Cc_1$ and 
$F(0)=0$, and 
$\BI-\BA_\bu = F(\int^t_0\nabla\bu\,\d\ell)$. In fact, 
$F(\ell) = -\sum_{j=1}^\infty \ell^j$.
Then, by Lemma \ref{lem:Hasp} and \eqref{defin:2}, we have
\begin{equation}\label{nonfun:1}
\sup_{t \in (0, T)} \|F(\int^t_0\nabla\bu\,\d\tau)\|_{B^{s+1}_{q,1}}
\leq C\int^T_0\|\nabla\bu(\cdot, \tau)\|_{B^{s+1}_{q,1}}\,d\tau.
\end{equation}
  Since $B^{s+1}_{q,1}$ is a Banach algebra, using \eqref{nonfun:1} we have
\begin{align*}
\|F(\rho_0+\theta, \bu)\|_{B^{s+1}_{q,1}}
\leq C(\|\rho_0\|_{B^{s+1}_{q,1}}+\|\theta(\cdot, t)\|_{B^{s+1}_{q,1}})
\|\bu\|_{L_1((0, T), B^{s+2}_{q,1})}\|\nabla\bu(\cdot, t)\|_{B^{s+1}_{q,1}}.
\end{align*}
Using \eqref{theta:1}, we have
$$\|F(\rho_0+\theta, \bu)\|_{L_1((0, T), B^{s+1}_{q,1})}
\leq C(\|\rho_0\|_{B^{s+1}_{q,1}}+\|\theta_t\|_{L_1((0, T), B^{s+1}_{q,1})})
\|\bu\|_{L_1((0, T), B^{s+2}_{q,1})}^2.
$$
Summing up, we have proved that 
\begin{equation}\label{mainest:1}\begin{aligned}
&\|(\eta_0-\rho_0-\theta)\dv\bu, 
F(\theta+\rho_0) \bu)\|_{L_1((0, T), B^{s+1}_{q,1}(\HS))} \\
&\quad \leq C\{\omega \|\bu\|_{L_1((0, T), B^{s+2}_{q,1})}
+ \|\pd_t\theta\|_{L_1((0, T), B^{s+1}_{q,1})}\|\bu\|_{L_1((0, T), B^{s+2}_{q,1})} \\
&\quad +(\|\eta_0\|_{B^{s+1}_{q,1}}+1)\|\bu\|_{L_1((0, T), B^{s+2}_{q,1})}^2
+ \|\pd_t\theta\|_{L_1((0, T), B^{s+1}_{q,1})}\|\bu\|_{L_1((0, T), B^{s+2}_{q,1})}^2\}.
\end{aligned}\end{equation}
Here and in the following, we use the estimate: 
$$\|\rho_0\|_{B^{s+1}_{q,1}} \leq \|\rho_0-\eta_0\|_{B^{s+1}_{q,1}} + \|\eta_0\|_{B^{s+1}_{q,1}})
\leq 1+ \|\eta_0\|_{B^{s+1}_{q,1}}.
$$

Next, we estimate $\|(\bG(\theta+\rho_0, \bu), \tilde\bG(\theta, \bu)) 
\|_{L_1((0, T), B^s_{q,1}(\HS))}$. 
By Lemma \ref{lem:APH}, the assumption: $N/d \leq s+1$,  \eqref{theta:1}, and 
\eqref{nonfun:1}, 
we have
\begin{align*}
\|(\BI-\BA_\bu)(\rho_0+\theta)\pd_t\bu\|_{B^s_{q,1}}
&\leq C\|\BI-\BA_\bu\|_{B^{N/q}_{q,1}}\|\rho_0+\theta\|_{B^{N/q}_{q,1}}
\|\pd_t\bu\|_{B^s_{q,1}} \\
& \leq C\|\bu\|_{L_1((0, T), B^{s+2}_{q,1})}(\|\rho_0\|_{B^{s+1}_{q,1}}
+ \|\pd_t\theta\|_{L_1((0, T), B^{s+1}_{q,1}})\|\pd_t\bu\|_{B^s_{q,1}}, \\
\|(\BA_u^{-1}-\BI)\dv(\BA_\bu\BA_\bu^\top:\nabla\bu)\|_{B^s_{q,1}}
&\leq C\|\BA_u^{-1}-\BI\|_{B^{N/q}_{q,1}}(\|\dv \nabla\bu\|_{B^{s}_{q,1}}
+ \|(\BA_\bu\BA_\bu^\top-\BI):\nabla\bu\|_{B^{s+1}_{q,1}})\\
& \leq C\|\bu\|_{L_1((0, T), B^{s+2}_{q,1})}(1
+ \|\bu\|_{L_1((0, T), B^{s+2}_{q,1})})\|\bu\|_{B^{s+2}_{q,1}}.
\end{align*}
Therefore, we have 
\begin{equation}\label{mainest:2}\begin{aligned}
&\|(\bG(\theta+\rho_0, \bu)\|_{L_1((0, T), B^s_{q,1})} \\
&\quad \leq 
C( \|\bu\|_{L_1((0, T), B^{s+2}_{q,1})}(\|\rho_0\|_{B^{s+1}_{q,1}}
+ \|\pd_t\theta\|_{L_1((0, T), B^{s+1}_{q,1}})\|\pd_t\bu\|_{L_1((0, T), B^s_{q,1})}\\
&\quad +
\|\bu\|_{L_1((0, T), B^{s+2}_{q,1})}(1
+ \|\bu\|_{L_1((0, T), B^{s+2}_{q,1})})\|\bu\|_{L_1((0, T), B^{s+2}_{q,1}}).
\end{aligned}\end{equation}

Next, we shall estimate 
$\tilde\bG(\theta, \bu) 
 = (\eta_0 - \rho_0)\pd_t\bu  + \nabla( P(\rho_0+\theta)-P(\rho_0)-P'(\eta_0)\theta)$. 
Using Lemma \ref{lem:APH}, \eqref{assump:1} and $N/q \leq s+1$,   we have
\begin{align*}
\|(\eta_0 - \rho_0)\pd_t\bu\|_{B^s_{q,1}}
&\leq C\|\tilde\eta_0 -\tilde \rho_0\|_{B^{N/q}_{q,1}}
\|\pd_t\bu\|_{B^s_{q,1}} \leq C\omega\|\pd_t\bu\|_{B^s_{q,1}}.
\end{align*}
To estimate the second term, we write 
\begin{align*}
&P(\rho_0+\theta)  - P(\rho_0) -P'(\eta_0)\theta \\
&\quad = P(\rho_0+\theta)-P(\rho_0) -P'(\rho_0)\theta 
+ (P'(\rho_0)-P'(\eta_0))\theta\\
&\quad =  \int^1_0(1-\ell)P''(\rho_0+\ell\theta)\,\d\ell\theta^2 
+ \int^1_0P''(\eta_0 + \ell(\rho_0-\eta_0))\,\d\ell (\rho_0-\eta_0)\theta.
\end{align*}
%%%%%%%%%%%%%%
Write $\rho_0+\ell\theta = \eta_0 + \rho_0-\eta_0 + \ell\theta$. By \eqref{assump:1},  \eqref{theta:1}
and $E_T(\theta, \bu) < \omega$,  we see that
$$\|\rho_0-\eta_0 + \ell\theta\|_{L_\infty} 
\leq C\|\rho_0-\eta_0\|_{B^{s+1}_{q,1}} + \ell\|\pd_t\theta\|_{L_1((0, T), B^{s+1}_{q,1})}
\leq C\omega
$$
for $\ell \in (0, 1)$. 
In view of \eqref{assump:0},  if we choose $\omega > 0$ so small that 
$$\|\rho_0-\eta_0 + \ell\theta\|_{L_\infty(\HS)} < \min(\rho_1/2, \rho_2)
\quad\text{for any $\ell \in [0, 1]$}, $$
we have 
$$
\rho_1/2  < \eta_0 + \rho_0-\eta_0 +\ell\theta < 2 \rho_2
$$
for any $\ell \in [0, 1]$. Recalling that $\eta_0 = \rho_* + \tilde\eta_0$, we have
\begin{equation}\label{range:1}
\rho_1/2-\rho_* <  \tilde\eta_0 + \rho_0-\eta_0+\ell\theta \leq 2\rho_2 -\rho_*
\end{equation}
for any $\ell \in (0, 1)$.  
%Since $\rho_1 - \rho_* < 0 < \rho_2 -\rho_*$ as follows from \eqref{assump:1},
%letting $c_2 = \max(\rho_*-\rho_1/2, \rho_2-\rho_*)$, we have 
%\begin{equation}\label{range:1}
%\|\tilde\eta_0 + \rho_0-\eta_0+\ell\theta\|_{L_\infty(\HS)} < c_2
%\end{equation}
%and \eqref{range:1} holds 
%for any $\ell \in (0, 1)$. 
From this  observation, we write
\begin{align*}
&\int^1_0(1-\ell)P''(\rho_0+\ell\theta)\,\d\ell\theta^2 \\
&= \int^1_0\int^1_0(1-\ell)P'''(\rho_* + m(\tilde \eta_0 + \rho_0-\eta_0 +\ell\theta)
(\tilde\eta_0+\rho_0-\eta_0 + \ell\theta)\,\d m\d\ell\, \theta^2
+ \frac12 P''(\rho_*)\theta^2.
\end{align*}

 And also, we write $\eta_0 + \ell(\rho_0-
\eta_0) = \eta_0 + (1-\ell)(\eta_0 -\eta_0) + \ell(\rho_0-
\eta_0)$ and  observe that 
\begin{equation}\label{domain:4.1}
\|\eta_0-\eta_0 + \ell(\rho_0-
\eta_0)\|_{L_\infty}
\leq C((1-\ell)\| \eta_0-\eta_0 \|_{B^{s+1}_{q,1}} + \ell\|\rho_0-\eta_0\|_{B^{s+1}_{q,1}})
\end{equation}
for any $\ell \in (0, 1)$, In view of \eqref{assump:1}, we choose
$\omega$ so small that 
$$\rho_1/2 < \eta_0 + \ell(\rho_0-\eta_0) < 2\rho_2$$
for any $\ell \in (0, 1)$ as follows from Assumption \eqref{assump:0}, we have
\begin{equation}\label{range:2}
\rho_1/2-\rho_* <  \tilde\eta_0 + \ell(\rho_0-\eta_0) < 2\rho_2-\rho_*
\end{equation}
for any $\ell \in (0, 1)$.  From this observation, we write 
\begin{align*}
&\int^1_0P''(\eta_0 + \ell(\rho_0-\eta_0))\,\d\ell (\rho_0-\eta_0)\theta
\\
& = \int^1_0\int^1_0 P'''(\rho_* + m(\tilde\eta_0
+  \ell(\rho_0-\eta_0))(\tilde\eta_0
+  \ell(\rho_0-\eta_0)
 \,\d\ell \d m (\rho_0-\eta_0)\theta
+ P''(\rho_*)(\rho_0-\eta_0)\theta.
\end{align*}
Therefore, by Lemmas \ref{lem:APH} and \ref{lem:Hasp}, we have
\begin{align*}
\|\nabla( P(\rho_0+\theta)-P(\rho_0) 
-P'(\eta_0)\theta)\|_{B^s_{q,1}}
\leq C(\rho_*, \|\tilde\eta_0\|_{B^{s+1}_{q,1}})( \|\theta\|_{B^{s+1}_{q,1}}^2 +\|\rho_0-\eta_0\|_{B^{s+1}_{q,1}}
\|\theta\|_{B^{s+1}_{q,1}}).
\end{align*}

%%%%%%%%%%%%%%%%%%%%%

Putting these estimates together and using \eqref{assump:1} and \eqref{theta:1}, we have
\begin{equation}\label{mainest:3}\begin{aligned}
\|\tilde\bG(\theta, \bu)\|_{L_1((0, T), B^s_{q,1})} 
&\leq C(\rho_*, \|\tilde\eta_0\|_{B^{s+1}_{q,1}})\{
\omega(\|\pd_t\bu\|_{L_1((0, T), B^s_{q,1})}+\|\theta\|_{L_1((0, T), B^{s+1}_{q,1})}) \\
&\quad + \|\pd_t\theta\|_{L_1((0, T), B^{s+1}_{q,1})}\|\theta\|_{L_1((0, T), B^{s+1}_{q,1})}\}.
\end{aligned}\end{equation}

%%%%%%%%%%%%%
Combining \eqref{mainest:1}, \eqref{mainest:2}, \eqref{mainest:3}
and recalling that $E_T(\theta, \bu) \leq \omega$, we have \eqref{est:4}.
And so,  choosing  $\omega > 0$ and $T>0$  so small that
 $Ce(\omega+\omega^2) \leq 1/2$ and  $\gamma T\leq 1$,
we have  \eqref{est:6}. 
Here, $C$and $\gamma$  depends on $\rho_*$ and $\|\tilde\eta_0\|_{B^{s+1}_{q,1}}$, 
and so the smallness of $\omega$ and $T>0$ depends on $\rho_*$ and 
$\|\tilde\eta_0\|_{B^{s+1}_{q,1}}$.  
 Therefore, we see that 
$\Phi$ maps $S_{T, \omega}$ into itself.  \par

We now prove that $\Phi$ is contractive.  To this end, 
pick up two elements $(\theta_i, \bu_i) \in S_{T, \omega}$
($i=1,2$) arbitrarily, and let $(\eta_i, \bw_i) = \Phi(\theta_i, \bu_i) \in S_{T, \omega}$ be solutions of 
equations \eqref{st:2} with $(\theta, \bu) = (\theta_i, \bu_i)$. 
Let  
\begin{align*}
\Theta &= \eta_1-\eta_2, \quad 
\bU = \bw_1-\bw_2, \\
\BF&=(\eta_0-\rho_0)\dv(\bu_1-\bu_2)
-(\theta_1\dv\bu_1-\theta_2\dv\bu_2) + F(\theta_1+\rho_0, \bu_1) - F(\theta_2+\rho_0, \bu_2), \\
\BG &= \bG(\theta_1+\rho_0, \bu_1) - \bG(\theta_2+\rho_0, \bu_2) 
+\tilde \bG(\theta_1, \bu_1) - \tilde\bG(\theta_2, \bu_2).
\end{align*}
Notice that   $\Theta$ and $\bU$ satisfy equations:
\begin{equation}\label{diff:1}\left\{\begin{aligned}
\pd_t\Theta+\eta_0\dv\bU =\BF& 
&\quad&\text{in $\HS\times(0, T)$}, \\
\pd_t\bU - \alpha\Delta \bU  -\beta\nabla\dv\bU
+  \nabla(P'(\rho_0) \Theta) = \BG
& &\quad&\text{in $\HS\times(0, T)$}, \\
\bU|_{\pd\HS} =0, \quad (\Theta, \bU)|_{t=0} = (0, 0)
& &\quad&\text{in $\HS$}.
\end{aligned}\right.\end{equation}
From \eqref{est:3}, it follows that 
\begin{equation}\label{diff:2}
E_T(\eta_1-\eta_2, \bw_1-\bw_2) \leq Ce^{\gamma T}(\|\BF\|_{L_1((0, T), B^{s+1}_{q,1})}
+ \|\BG\|_{L_1((0, T), B^{s+2}_{q,1})}).
\end{equation}
We shall prove that 
\begin{equation}\label{diff:3}
\|\BF\|_{L_1((0, T), B^{s+1}_{q,1})}
+ \|\BG\|_{L_1((0, T), B^{s+2}_{q,1})}
\leq C(\omega+\omega^2) E_T(\theta_1-\theta_2, \bu_1-\bu_2).
\end{equation}
We start with estimating $\BF$.  Recall that $B^{N/q}_{q,1}$ and 
$B^{s+1}_{q,1}$ are Banach algebra. 
By Lemma \ref{lem:APH} and \eqref{assump:1}
\begin{align*}\|(\eta_0-\rho_0)\dv(\bu_1-\bu_2)\|_{L_1((0, T), B^{s+1}_{q,1})}
&\leq C\|\eta_0-\rho_0\|_{B^{s+1}_{q,1}}\|\bu_1-\bu_2\|_{L_1((0, T), B^{s+2}_{q,1})}
\\
&\leq C\omega \|\bu_1-\bu_2\|_{L_1((0, T), B^{s+2}_{q,1})}.
\end{align*}
Writing $\theta_1\dv\bu_1-\theta_2\dv\bu_2= (\theta_1-\theta_2)\dv\bu_1+ \theta_2(\dv\bu_1-\dv\bu_2)$
and using Lemma \ref{lem:APH} and \eqref{theta:1} gives 
\begin{align*}
&\|\theta_1\dv\bu_1-\theta_2\dv\bu_2\|_{B^{s+1}_{q,1}} \leq C(\|\dv\bu_1\|_{B^{s+1}_{q,1}}
\|\theta_1-\theta_2\|_{B^{s+1}_{q,1}} + \|\theta_2\|_{B^{s+1}_{q,1}}\|\dv(\bu_1-\bu_2)\|_{B^{s+1}_{q,1}})\\
&\quad \leq C(\|\bu_1\|_{B^{s+2}_{q,1}}\|\pd_t(\theta_1-\theta_2)\|_{L_1((0,T), B^{s+1}_{q,1})}
+ \|\pd_t\theta_2\|_{L_1((0, T), B^{s+1}_{q,1})}\|\bu_1-\bu_2\|_{B^{s+2}_{q,1}}).
\end{align*}
Using $E_T(\theta_i, \bu_i) \leq \omega$ ($i=1,2$), we have 
$$
\|\theta_1\dv\bu_1-\theta_2\dv\bu_2\|_{L_1((0, T), B^{s+1}_{q,1}) }
\leq C\omega E_T(\theta_1-\theta_2, \bu_1-\bu_2).
$$ 
Write 
\begin{align*}
F(\theta_1+\rho_0, \bu_1) - F(\theta_2+\rho_0, \bu_2)
&= (\theta_1-\theta_2)((\BI - \BA_{\bu_1}):\nabla\bu_1 \\
& -(\rho_0+\theta_2)(\BA_{\bu_1} - \BA_{\bu_2}):\nabla \bu_1
+ (\rho_0+\theta_2)(\BI-\BA_{\bu_2}):\nabla(\bu_1-\bu_2).
\end{align*}
Set $\BI-\BA_{\bu} = F(\int^t_0\nabla \bu)$ and 
%Since $B^{s+1}_{q,1}$ is a Banach algebra as follows Lemma \ref{lem:APH} and $F(0)=0$,   
%using \eqref{defin:2}, we have
%$$\|F(\int^t_0\nabla\bu\,\d\ell)\|_{B^{s+1}_{q,1}} \leq C|\bu\|_{L_1((0, T), B^{s+2}_{q,1})}.$$
write
\begin{align*}
&F(\int^t_0\nabla \bu_1\,\d\ell) - F(\int^t_0\nabla\bu_2\,\d\ell) 
= \int^1_0F'(\int^1_0 (\nabla\bu_2+ m\nabla(\bu_1-\bu_2))\,\d\ell)\,\d m
\int^t_0\nabla(\bu_1-\bu_2)\,\d\ell  \\
& = \Bigl\{F'(0) + \int^1_0\int^1_0F''(n\int^1_0 (\nabla\bu_2+ m\nabla(\bu_1-\bu_2))\,\d\ell)\,\d m \d n\Bigr\}
\int^t_0\nabla(\bu_1-\bu_2)\,\d\ell.
\end{align*}
By \eqref{defin:2}, we have 
\begin{align*}
&\sup_{t \in (0, T)}\Bigl\|n\int^1_0 (\nabla\bu_2+ m\nabla(\bu_1-\bu_2))\,\d\tau\Bigr\|_{L_\infty}\\
&\leq  (1-m)\sup_{t \in (0, T)}\Bigl\|\int^1_0 \nabla\bu_2\,\d\tau\Bigr\|_{L_\infty}
+ m\sup_{t \in (0, T)} \Bigl\|\int^1_0 \nabla\bu_1\,\d\tau\Bigr\|_{L_\infty}
\leq Cc_1 
\end{align*}
by Lemmas \ref{lem:APH} and \ref{lem:Hasp},  we have
$$\|F(\int^t_0\nabla \bu_1\,\d\ell) - F(\int^t_0\nabla\bu_2\,\d\ell) \|_{B^{s+1}_{q,1}}
\leq C\|\bu_1-\bu_2\|_{L_1((0, T), B^{s+2}_{q,1})}.
$$
Thus, by Lemma \ref{lem:APH} , \eqref{theta:1} and \eqref{nonfun:1}, we have
\begin{align*}
&\|F(\theta_1+\rho_0, \bu_1) - F(\theta_2+\rho_0, \bu_2)\|_{B^{s+1}_{q,1}}\\
&\quad \leq C\{\|\theta_1-\theta_2\|_{B^{s+1}_{q,1}}\|\BI - \BA_{\bu_1}\|_{B^{s+1}_{q,1}}\|\nabla\bu_1\|_{B^{s+1}_{q,1}}
+\|\rho_0+\theta_2\|_{B^{s+1}_{q,1}}\|\BA_{\bu_1} - \BA_{\bu_2}\|_{B^{s+1}_{q,1}}\|\nabla \bu_1\|_{B^{s+1}_{q,1}}\\
&\quad + \|\rho_0+\theta_2\|_{B^{s+1}_{q,1}}\|\BI-\BA_{\bu_2}\|_{B^{s+1}_{q,1}}
\|\nabla(\bu_1-\bu_2)\|_{B^{s+1}_{q,1}}\}\\
&\quad\leq C(\|\pd_t(\theta_1-\theta_2)\|_{L_1((0, T), B^{s+1}_{q,1})}\|\bu_1\|_{L_1((0, T), B^{s+1}_{q,1})}
\|\nabla\bu_1\|_{B^{s+1}_{q,1}}\\
&\quad + (\|\rho_0\|_{B^{s+1}_{q,1}}+\|\pd_t\theta_2\|_{L_1((0, T), B^{s+1}_{q,1})})
\|\bu_1-\bu_2\|_{L_1((0, T), B^{s+2}_{q,1}}
\|\nabla \bu_1\|_{B^{s+1}_{q,1}}
\\
&\quad +(\|\rho_0\|_{B^{s+1}_{q,1}}+\|\pd_t\theta_2\|_{L_1((0, T), B^{s+1}_{q,1})})
\|\nabla\bu_2\|_{L_1((0, T), B^{s+1}_{q,1})}\|\nabla(\bu_1-\bu_2)\|_{B^{s+1}_{q,1}}).
\end{align*}
Using the conditions: $E_T(\theta_i, \bu_i) \leq \omega$ ($i=1,2$), we have
$$\|F(\theta_1+\rho_0, \bu_1) - F(\theta_2+\rho_0, \bu_2)\|_{L_1((0, T), B^{s+1}_{q,1})}
\leq C(\omega+\omega^2)E_T(\theta_1-\theta_2, \bu_1-\bu_2),
$$
where $C$ depends on $\|\eta_0\|_{B^{s+1}_{q,1}}$. In fact, we estimate 
$$\|\rho_0\|_{B^{s+1}_{q,1}}+\|\pd_t\theta_2\|_{L_1((0, T), B^{s+1}_{q,1})}
\leq \|\rho_0-\eta_0\|_{B^{s+1}_{q,1}} + \|\eta_0\|_{B^{s+1}_{q,1}} + 
\|\pd_t\theta_2\|_{L_1((0, T), B^{s+1}_{q,1})}
\leq 2\omega + \|\eta_0\|_{B^{s+1}_{q,1}}.
$$
Summing up, we have obtained
%%%%%%%%%%%%%%%%%%%%
%%%%%%%%%%%%%%
\begin{equation}\label{diff:4}
\|\BF\|_{L_1((0, T), B^{s+1}_{q,1})} 
\leq C(\omega+ \omega^2)E_T(\theta_1-\theta_2, \bu_1-\bu_2)
\end{equation}
for some constant $C$ depending on $\|\eta_0\|_{B^{s+1}_{q,1}}$. \par 

Now, we treat $\BG$.  First, we estimate $\tilde\bG(\theta_1, \bu_1) - \tilde\bG(\theta_2, \bu_2)$.
Write
\begin{align*}
&\tilde\bG(\theta_1, \bu_1) - \tilde\bG(\theta_2, \bu_2) 
 = (\eta_0-\rho_0)\pd_t(\bu_1-\bu_2) \\
&+ \nabla (P(\rho_0+\theta_1) - P(\rho_0) - P'(\eta_0)\theta_1
-(P(\rho_0+\theta_2) - P(\rho_0) - P'(\eta_0)\theta_2))\\
& = (\eta_0-\rho_0)\pd_t(\bu_1-\bu_2) +
\nabla\{ \int^1_0P''(\eta_0 + \ell(\rho_0 - \eta_0))\,\d\ell
(\rho_0-\eta_0)(\theta_1-\theta_2) \}\\
&+\nabla\{\int^1_0(1-\ell)(P''(\rho_0+\ell\theta_1) - P''(\rho_0+\ell\theta_2))\,\d\ell \theta_1^2
+ \int^1_0(1-\ell)(P''(\rho_0+\ell\theta_2)\,\d\ell (\theta_1^2-\theta_2^2) \}.
\end{align*}
Writing $\eta_0 + \ell(\rho_0-\eta_0) = \eta_0 + (1-\ell)(\eta_0-\eta_0)
+ \ell(\rho_0-\eta_0)$, using \eqref{domain:4.1} and \eqref{assump:0}, we may assume that 
$$\rho_1/2-\rho_* < \tilde\eta_0 + \ell(\rho_0-\eta_0)
< 2\rho_2-\rho_*$$
for any $\ell \in (0, 1)$, and so we write
\begin{align*}
\int^1_0P''(\eta_0 + \ell(\rho_0 - \eta_0))\,\d\ell
= \int^1_0\int^1_0 P'''(\rho_* + m(\tilde\eta_0 + 
 \ell(\rho_0 - \eta_0))(\tilde\eta_0 + 
 \ell(\rho_0 - \eta_0))\,\d\ell\d m + P''(\rho_*).
\end{align*}
Thus,  by Lemmas \ref{lem:APH} and \ref{lem:Hasp} and \eqref{assump:1}, we have
\begin{align*}
&\|\nabla( \int^1_0P''(\eta_0 + \ell(\rho_0 - \eta_0))\,\d\ell
(\rho_0-\eta_0)(\theta_1-\theta_2))\|_{B^s_{q,1}}
\leq C(\rho_*, \|\tilde\eta_0\|_{B^{s+1}_{q,1}})\omega\|\theta_1-\theta_2\|_{B^{s+1}_{q,1}}.
\end{align*}
Write
$$\rho_0 + \ell\theta_2 + m(\rho_0 + \ell\theta_1-(\rho_0+\ell\theta_2))
= \eta_0 + (\rho_0-\eta_0) + \ell\theta_2 + m\ell(\theta_1-\theta_2).
$$
Since 
\begin{align*}
\|(\rho_0-\eta_0) + \ell\theta_2 + m\ell(\theta_1-\theta_2)\|_{L_\infty}
&\leq C(\|\rho_0-\eta_0\|_{B^{s+1}_{q,1}}
+\ell(1-m)\|\theta_2\|_{B^{s+1}_{q,1}} + \ell m\|\theta_1\|_{B^{s+1}_{q,1}})\\
&\leq C(\omega+\sum_{i=1,2}\|\pd_t\theta_i\|_{L_1((0, T), B^{s+1}_{q,1})})
\leq C\omega,
\end{align*}
we may assume that 
$$\rho_1/2-\rho_* < \tilde\eta_0 + (\rho_0-\eta_0)+\ell\theta_2
+ m\ell(\theta_1-\theta_2) < 2\rho_2-\rho_*,$$
and so, we write
\begin{align*}
&\int^1_0(1-\ell)(P''(\rho_0+\ell\theta_1)-P''(\rho_0 + \ell\theta_2))\,\d\ell\theta_1^2\\
&= \int^1_0\int^1_0(1-\ell)P'''(\rho_0+\ell\theta_2+m\ell(\theta_1-\theta_2))
(\theta_1-\theta_2)\,\d\ell\,\d m\, \theta_1^2\\
& = \int^1_0\int^1_0\int^1_0(1-\ell)P''''(\rho_*+n(\tilde\eta_0+(\rho_0-\eta_0)+\ell\theta_2
+m\ell(\theta_1-\theta_2))) \\
&\qquad \times(\tilde\eta_0+(\rho_0-\eta_0)+\ell\theta_2+m\ell(\theta_1-\theta_2))
\,\d\ell\d m\d n\,(\theta_1-\theta_2)\theta_1^2
+ \frac12P'''(\rho_*)(\theta_1-\theta_2)\theta_1^2.
\end{align*}
By Lemmas \ref{lem:APH} and \ref{lem:Hasp}, and \eqref{theta:1}, 
we have
\begin{align*}
&\|\nabla(\int^1_0(1-\ell)(P''(\rho_0+\ell\theta_1)-P''(\rho_0 + \ell\theta_2))\,\d\ell\theta_1^2)
\|_{B^s_{q,1}}\\
&\quad \leq C(\rho_*, \|\tilde\eta_0\|_{B^{s+1}_{q,1}})\|\pd_t\theta_1\|_{L_1((0, T), B^{s+1}_{q,1}}^2
\|\theta_1-\theta_2\|_{B^{s+1}_{q,1}}.
\end{align*}
Concerning the last term, we write $\rho_0 + \ell\theta_2 = \eta_0 + (\rho_0-\eta_0) + \ell\theta_2$.  Since
$$\|\rho_0-\eta_0 + \ell\theta_2\|_{L_\infty} \leq C(\|\rho_0-\eta_0\|_{B^{s+1}_{q,1}}+
\|\theta_2\|_{B^{s+1}_{q,1}})
\leq C(\|\rho_0-\eta_0\|_{B^{s+1}_{q,1}} + \|\pd_t\theta_2\|_{L_1((0, T), B^{s+1}_{q,1})})
\leq C\omega,$$
choosing $\omega>0$ small enough, we may assume that 
$$\rho_1/2-\rho_* < \tilde\eta_0 + (\rho_0-\eta_0) + \ell\theta_2 < 2\rho_2-\rho_*$$
for any $\ell \in (0, 1)$.  Thus, writing
\begin{align*}
&\int^1_0(1-\ell)P''(\rho_0 + \ell\theta_2)\,\d\ell\,(\theta_1^2-\theta_2^2)\\
&= \Bigl\{\frac12P''(\rho_*)
+ \int^1_0\int^1_0(1-\ell)P'''(\rho_*+m(\tilde\eta_0 + \rho_0-\eta_0+\ell\theta_2))
(\tilde\eta_0 + \rho_0-\eta_0+\ell\theta_2)\,\d\ell\d m\}\\
&\qquad\times(\theta_1-\theta_2)(\theta_1+\theta_2),
\end{align*}
By Lemmas \ref{lem:APH} and \ref{lem:Hasp}, and \eqref{theta:1}, we have
\begin{align*}
&\|\nabla(\int^1_0(1-\ell)P''(\rho_0 + \ell\theta_2)\,\d\ell\,(\theta_1^2-\theta_2^2))
\|_{B^s_{q,1}} \\
&\quad \leq C(\rho_*, \|\tilde\eta_0\|_{B^{s+1}_{q,1}})
(\|\tilde\eta_0\|_{B^{s+1}_{q,1}}+\|\rho_0-\eta_0\|_{B^{s+1}_{q,1}}
+\|\pd_t\theta_2\|_{L_1((0, T),B^{s+1}_{q,1})})\\
&\qquad\times (\|\pd_t\theta_1\|_{L_1((0, T),B^{s+1}_{q,1})}
+ \|\pd_t\theta_2\|_{L_1((0, T),B^{s+1}_{q,1})})\|\theta_1-\theta_2\|_{B^{s+1}_{q,1}}.
\end{align*}
Summing up, we have obtained 
\begin{align*}
&\|\tilde\bG(\theta_1, \bu_1) - \tilde\bG(\theta_2, \bu_2) \|_{L_1((0, T), B^s_{q,1}} 
\leq C\|\eta_0-\rho_0\|_{B^{N/q}_{q,1}}\|\pd_t(\bu_1-\bu_2)\|_{L_1((0, T), B^s_{q,1}} \\
&\quad  + C(\rho_*, \|\eta_0\|_{B^{s+1}_{q,1}})(\omega 
+ \|\pd_t\theta_1\|_{L_1((0, T), B^{s+1}_{q,1})}^2
+ \sum_{i=1}^2\|\pd_t\theta_i\|_{L_1((0, T), B^{s+1}_{q,1})})\|\theta_1-\theta_2\|_{L_1((0, T), B^{s+1}_{q,1})}.
\end{align*}
Since $E_T(\theta_i, \bu_i) \leq \omega$, using \eqref{semi:1}, we have
$$\|\tilde\bG(\theta_1, \bu_1) - \tilde\bG(\theta_2, \bu_2) \|_{B^s_{q,1}} 
\leq C(\omega+\omega^2)E_T(\theta_1-\theta_2, \bu_1-\bu_2).
$$
Finally, we estimate $\bG(\rho_0+\theta_1, \bu_1)-\bG(\rho_0+\theta_2, \bu_2)$. 
We write
\begin{align*}
&\bG(\rho_0+\theta_1, \bu_1)-\bG(\rho_0+\theta_2, \bu_2)\\
&= ((\BA_{\bu_2}^\top)^{-1}-(\BA_{\bu_1}^\top)^{-1})(\rho_0+\theta_1)\pd_t\bu_1
+ (\BI-(\BA_{\bu_2}^\top)^{-1})(\theta_1-\theta_2)\pd_t\bu_1 \\
&+ (\BI-(\BA_{\bu_2}^\top)^{-1})(\rho_0+\theta_2) \pd_t(\bu_1-\bu_2)\\
& + \alpha ((\BA_{\bu_1}^\top)^{-1}-(\BA_{\bu_2}^\top)^{-1})\dv(\BA_{\bu_1}\BA_{\bu_1}^\top:\nabla\bu_1)
+ \alpha((\BA_{\bu_2}^\top)^{-1}-\BI)\dv((\BA_{\bu_1}\BA_{\bu_1}^\top-\BA_{\bu_2}\BA_{\bu_2}^\top):\nabla\bu_1)\\
&+  \alpha((\BA_{\bu_2}^\top)^{-1}-\BI)\dv(\BA_{\bu_2}\BA_{\bu_2}^\top:\nabla(\bu_1-\bu_2))
+ \alpha\dv((\BA_{\bu_1}-\BA_{\bu_2})(\BA_{\bu_1}^\top-\BI):\nabla\bu_1) \\
& + \alpha\dv( \BA_{\bu_2}(\BA_{\bu_1}^\top-\BA_{\bu_2}^\top):\nabla\bu_1) 
+ \alpha\dv (\BA_{\bu_2}(\BA_{\bu_2}^\top-\BI):\nabla(\bu_1-\bu_2) )\\
&+ \beta\nabla((\BA_{\bu_1}^\top - \BA_{\bu_2}^\top):\nabla\bu_1)
+ \beta\nabla((\BA_{\bu_2}^\top-\BI):\nabla(\bu_1-\bu_2)).
\end{align*}
Employing the similar argument to the proof of \eqref{diff:4}, we have
\begin{align*}
\|(\BA_{\bu_1}^\top)^{-1} -(\BA_{\bu_2}^\top)^{-1}\|_{B^{N/q}_{q,1}} 
&\leq C\|\nabla(\bu_1-\bu_2)\|_{L_1((0, T), B^{N/q}_{q,1})}, \quad
\|(\BI-(\BA_{\bu_i})^\top\|_{B^{N/q}_{q,1}} \leq C\|\nabla \bu_i\|_{L_1((0, T), B^{N/q}_{q,1})}, \\
\|\BA_{\bu_1}^\top -\BA_{\bu_2}^\top\|_{B^{N/q}_{q,1}} 
&\leq C\|\nabla(\bu_1-\bu_2)\|_{L_1((0, T), B^{N/q}_{q,1})}, \quad 
\|\BA_{\bu_i}\BA_{\bu_i}^\top-\BI\|_{B^{s+1}_{q,1}}\leq C\|\nabla\bu_i\|_{L_1((0, T), B^{s+1}_{q,1})}, \\
	\|\BA_{\bu_1}\BA_{\bu_1}^\top - \BA_{\bu_2}\BA_{\bu_2}^\top\|_{B^{N/q}_{q,1}}
&\leq  C\|\nabla(\bu_1-\bu_2)\|_{L_1((0, T), B^{N/q}_{q,1})}.
\end{align*}
Therefore, by Lemmas \ref{lem:APH} and \ref{lem:Hasp}, we have
\begin{align*}
&\|\bG(\rho_0+\theta_1, \bu_1)-\bG(\rho_0+\theta_2, \bu_2)\|_{B^s_{q,1}} \\
& \leq C\{\|(\BA_{\bu_2}^\top)^{-1}-(\BA_{\bu_1}^\top)^{-1}\|_{B^{N/q}_{q,1}}
(\|\rho_0\|_{B^{N/q}_{q,1}} + \|\pd_t\theta\|_{L_1((0, T), B^{N/q}_{q,1})})
\|\pd_t\bu_1\|_{B^s_{q,1}} \\
&+ \|\nabla\bu_2\|_{L_1((0, T), B^{N/q}_{q,1})}\|\pd_t(\theta_1-\theta_2)\|_{L_1((0, T), 
B^{N/q}_{q,1})}\|\pd_t\bu_1\|_{B^s_{q,1}} \\
&  + \|\nabla\bu_2\|_{L_1((0, T), B^{N/q}_{q,1})}(\|\rho_0\|_{B^{N/q}_{q,1}}+
\|\pd_t\theta_2\|_{L_1((0, T), B^{N/q}_{q,1})})\|\pd_t(\bu_1-\bu_2)\|_{B^s_{q,1}} \\
&+ \|(\BA_{\bu_2}^\top)^{-1}-(\BA_{\bu_1}^\top)^{-1}\|_{B^{N/q}_{q,1}}
(1+\|\nabla\bu_1\|_{B^{s+1}_{q,1}})\|\bu_1\|_{B^{s+1}_{q,1}} \\
&+ \|\nabla\bu_2\|_{L_1((0, T), B^{N/q}_{q,1})}\|\BA_{\bu_1}\BA_{\bu_1}^\top
-\BA_{\bu_2}\BA_{\bu_2}^\top\|_{B^{s+1}_{q,1}}\|\nabla\bu_1\|_{B^{s+1}_{q,1}}
\\
&+  \|\nabla\bu_2\|_{L_1((0, T), B^{N/q}_{q,1})}
(1+ \|\nabla\bu_2\|_{L_1((0, T), B^{N/q}_{q,1})})\|\nabla(\bu_1-\bu_2) \|_{B^{s+1}_{q,1}} \\
&+ \|\nabla(\bu_1-\bu_2)\|_{L_1((0, T), B^{s+1}_{q,1})}\|\nabla\bu_1\|_{L_1((0, T), B^{s+1}_{q,1})}
\|\nabla\bu_1\|_{B^{s+1}_{q,1}} \\
& + (1+\|\nabla\bu_2\|_{L_1((0, T), B^{s+1}_{q,1})})\|\nabla(\bu_1-\bu_2)\|_{L_1((0, T), B^{s+1}_{q,1})}
\|\nabla\bu_1\|_{B^{s+1}_{q,1}} \\
&+ (1+\|\nabla\bu_2\|_{L_1((0, T), B^{s+1}_{q,1})})\|\nabla\bu_2\|_{L_1((0, T), B^{s+1}_{q,1})}
\|\nabla(\bu_1-\bu_2)\|_{B^{s+1}_{q,1}} \\
&+ \|\nabla(\bu_1 -\bu_2)\|_{L_1((0, T), B^{s+1}_{q,1})}\|\nabla\bu_1\|_{B^{s+1}_{q,1}} 
+ \|\nabla\bu_2\|_{L_1((0, T), B^{s+1}_{q,1})}\|\nabla(\bu_1-\bu_2)\|_{B^{s+1}_{q,1}}\}.
\end{align*}
We have $\|\rho_0\|_{B^{s+1}_{q,1}} \leq \|\rho_0-\eta_0\|_{B^{s+1}_{q,1}} + \|\tilde\eta_0\|_{B^{s+1}_{q,1}}
\leq C\omega + \|\tilde\eta_0\|_{B^{s+1}_{q,1}}$. 
Thus, we have
$$\|\bG(\rho_0+\theta_1, \bu_1)-\bG(\rho_0+\theta_2, \bu_2)\|_{L_1(0, T), B^s_{q,1})} 
\leq C(\rho_*, \|\tilde\eta_0\|_{B^{s+1}_{q,1}})(\omega+\omega^2)E_T(\theta_1-\theta_2,
\bu_1-\bu_2).$$
Summing up, we have obtained \eqref{diff:3}. \par
Combining \eqref{diff:2} and \eqref{diff:3} yields 
$$E_T(\eta_1-\eta_2, \bw_1-\bw_2) \leq Ce^{\gamma T}(\omega+\omega^2)
E_T(\theta_1-\theta_2, \bu_1,\bu_2).$$
Thus, choosing  $\omega>0$ and $T>0$ so small that $Ce(\omega+\omega^2) \leq 1/2$
and $\gamma T \leq 1$, we have 
$$E_T(\eta_1-\eta_2, \bw_1-\bw_2) \leq (1/2)
E_T(\theta_1-\theta_2, \bu_1,\bu_2),
$$
which shows that 
$\Phi$ is a contraction map from $S_{T, \omega}$ into itself. Therefore, by the Banach fixed
point theorem, $\Phi$ has a unique fixed point $(\eta, \bw) \in S_{T, \omega}$.
In \eqref{st:2}, setting $(\eta, \bw) = (\theta, \bu)$ and recalling 
$\rho = \rho_0+\theta$ and $\tilde\bG(\theta, \bu) = (\eta_0-\rho_0-\theta)\pd_t\bu
- \nabla(P(\rho_0+\theta)-P(\rho_0)-P'(\eta_0)\theta)$, we see that $\theta$ and $\bu$ satisfy
equations:
\begin{equation}\label{st:4}\left\{\begin{aligned}
\pd_t\theta+\eta_0\dv\bu = (\eta_0-\rho_0-\theta)\dv\bu +  F(\rho_0+\theta, \bu)& 
&\quad&\text{in $\HS\times(0, T)$}, \\
\eta_0\pd_t\bu - \alpha\Delta \bu  -\beta\nabla\dv\bu
+  \nabla(P'(\eta_0) \theta) = -\nabla P(\rho_0) + \bG(\rho_0+\theta, \bu)
- \tilde\bG(\theta, \bu)
& &\quad&\text{in $\HS\times(0, T)$}, \\
\bu|_{\pd\HS} =0, \quad (\eta, \bu)|_{t=0} = (0, \bu_0)
& &\quad&\text{in $\HS$}.
\end{aligned}\right.\end{equation}
Thus, setting $\rho = \rho_0 + \theta$, from \eqref{st:4} it follows that 
$\rho$ and $\bu$ satisfy equations \eqref{ns:2}.  Moreover, 
$(\rho, \bu)$ belongs to $S_{T,\omega}$, which completes the proof of 
Theorem \ref{thm:2}.

{\bf A proof of Theorem \ref{thm:1}.}~ As was mentioned at the beginning of 
Subsec. \ref{sec.1.1}, $y= X_\bu(x, t)$ is a $C^1$ diffeomorphism from $\Omega$ onto itself
for any $t \in (0, T)$, because $\bu \in L_1((0, T), B^{s+2}_{q,1}(\Omega)^N)$.
Let $x = X_\bu^{-1}(y, t)$ be the inverse of $X_\bu$. For any function 
$F \in B^s_{q,1}(\HS)$, $1 < q < \infty$, $s \in \BR$, it follow from the chain rule that 
$$\|F\circ X_\bu^{-1}\|_{B^s_{q,1}(\HS)} \leq C\|F\|_{B^s_{q,1}(\HS)}$$
with some constant $C>0$ (cf. Amann \cite[Theorem 2.1]{Amann00}). 
Let $(\rho, \bv) = (\theta, \bu)\circ X_\bu^{-1}$ and $\BA_\bu = (\nabla_yX_\bu)^{-1}$.
Let $\BA_\bu^\top = (A_{jk})$.  There holds
\begin{gather*}
\nabla_y(\rho, \bv) = (\BA_\bu^\top \nabla_x(\theta, \bu))\circ X_\bu^{-1}, \\
\pd_{y_j}\pd_{y_k}\bv = \sum_{\ell, \ell'}
A_{j\ell}\pd_{y_\ell}(A_{k\ell'}\pd_{y_{\ell'}}\bu))\circ X_\bu^{-1}
\quad(j, k =1, \ldots, N).
\end{gather*}
Hence,   
we rely on the relation:
$$\pd_t(\rho, \bv) = \pd_t(\theta, \bu)\circ X_\bu^{-1}
-((\bu\circ X_\bu^{-1})\cdot\nabla_y)(\rho, \bv),
$$
concerning the time derivative of $\rho$ and $\bv$. 
Therefore, by Theorem \ref{thm:2} and Lemma \ref{lem:APH}, 
we arrive at \eqref{main.reg}.  This completes the proof of Theorem \ref{thm:1}. 

%%%%%%%%%%%%%%%%%%%%

\end{document}